\newcommand\reallywidehat[1]{%
\savestack{\tmpbox}{\stretchto{%
  \scaleto{%
    \scalerel*[\widthof{\ensuremath{#1}}]{\kern-.6pt\bigwedge\kern-.6pt}%
    {\rule[-\textheight/2]{1ex}{\textheight}}
  }{\textheight}%
}{0.5ex}}%
\stackon[1pt]{#1}{\tmpbox}%
}
\newcommand{\N}{\mathbb N}
\newcommand{\Z}{\mathbb Z}
\newcommand{\R}{\mathbb R}
\def\E{\mathbb E}
\newcommand{\linf}{L^{\infty}}
\newcommand{\bP}{\mathbb{P}}
\newcommand{\bx}{\bm{x}}
\newcommand{\bz}{\bm{z}}
\newcommand{\bX}{\boldsymbol X}
\newcommand{\balpha}{\mathbf{\alpha}}
\DeclareMathOperator*{\esssup}{ess\,sup}
\newcommand{\reg}{\text{reg}}
\def\XXint#1#2#3{{\setbox0=\hbox{$#1{#2#3}{\int}$}
\vcenter{\hbox{$#2#3$}}\kern-.5\wd0}}
\newcommand{\T}{\mathbb{T}}
\newtheorem{thm}{Theorem}[section]
\newtheorem{lem}[thm]{Lemma}
\newtheorem{cor}[thm]{Corollary}
\newtheorem{prop}[thm]{Proposition}
\newtheorem{assumption}[thm]{Assumption}
\theoremstyle{definition}
\newtheorem{defn}[thm]{Definition}
\newtheorem{rmk}[thm]{Remark}
\numberwithin{equation}{section}
\def\smallnegint{\mathop{\int\mkern-13mu
        \raise.5ex\hbox{${\scriptscriptstyle\diagup}$}}\nolimits}
\def\ds{\displaystyle}
\def\tr{\operatorname{tr}}
\def\bx{{\boldsymbol x}}
\def\by{{\boldsymbol y}}
\def\ssetminus{\,\raise.4ex\hbox{$\scriptstyle\setminus$}\,}
\newcommand{\be}{\begin{equation}}
\newcommand{\ee}{\end{equation}}
\newcommand{\bc}{\begin{case}}
\newcommand{\ec}{\end{cases}}
\newcommand{\bs}{\begin{split}}
\newcommand{\es}{\end{split}}
\newcommand{\norm}[1]{\left\Vert#1\right\Vert}
\newcommand{\bm}[1]{\boldsymbol #1}
\renewcommand{\d}{d}
\newcommand{\Rd}{{\mathbb{R}^\d}}
\renewcommand{\bar}{\overline}
\renewcommand{\tilde}{\widetilde}
\renewcommand{\hat}{\widehat}
\def \be {\begin{equation}}
\def \ee {\end{equation}}
\def \E {\mathbb{E}}
\def \R {\mathbb{R}}
\def\red{\textcolor{red}}
\renewcommand{\tilde}{\widetilde}
\newcommand{\sym}{\mathrm{Sym}}
\def\red{\textcolor{red}}
\newcommand{\trip}[1]{{\left\vert\kern-0.25ex\left\vert\kern-0.25ex\left\vert #1 
    \right\vert\kern-0.25ex\right\vert\kern-0.25ex\right\vert}}
\newcommand{\cP}{\mathcal{P}}
\newcommand{\ov}{\overline}
\newcommand{\op}{\text{op}}
\newcommand{\bd}{\mathbf{d}}
\newcommand{\cA}{\mathcal{A}}
\newcommand{\cB}{\mathcal{B}}
\newcommand{\cH}{\mathcal{H}}
\newcommand{\cF}{\mathcal{F}}
\newcommand{\cL}{\mathcal{L}}
\newcommand{\EE}{{\mathbb E}}
\newcommand{\NN}{{\mathbb N}}
\newcommand{\PP}{{\mathbb P}}
\newcommand{\QQ}{{\mathbb Q}}
\newcommand{\RR}{{\mathbb R}}
\newcommand{\cadlag}{}
\def\cadlag/{c\`adl\`ag}
\newcommand{\eps}{\varepsilon}
\newcommand{\Id}{\on{Id}}
\newcommand{\oline}[1]{\overline{#1}}
\newcommand{\wt}{\widetilde}
\DeclareMathOperator*{\essinf}{ess\,inf}
\newcommand{\mcl}{\mathcal}
\newcommand{\mbf}{\mathbf}
\newcommand{\on}{\operatorname}
\newcommand{\com}{\Delta_{\text{com}}}
\begin{document}

\title[Error estimates for HJB equations on the Wasserstein space]{Error estimates for finite-dimensional approximations of Hamilton-Jacobi-Bellman equations on the Wasserstein space}

\author[S. Daudin]{Samuel Daudin}
\address{(S. Daudin) Universit\'e Paris Cit\'e, CNRS, Sorbonne Universit\'e, Laboratoire Jacques-Louis Lions (LJLL), F-75006, Paris, France
}\email{samuel.daudinATu-paris.fr
}

\author[J. Jackson]{Joe Jackson\address{(J. Jackson) The University of Chicago, Eckhart Hall, 5734 S University Ave, Chicago, IL 60637, USA}
\email{jsjackson@uchicago.edu}}
 
\author[B. Seeger]{ Benjamin Seeger
\address{(B. Seeger) University of North Carolina at Chapel Hill, 318 Hanes Hall, CB \#3260
Chapel Hill, NC 27599-3260
}\email{bseeger@unc.edu
}}

\thanks{J. Jackson was supported by the NSF under Grant No. DMS2302703. B. Seeger was partially supported by the National Science Foundation (NSF) under award number DMS-2437066.}

\begin{abstract}
    In this paper, we study a Hamilton-Jacobi-Bellman (HJB) equation set on the Wasserstein space $\cP_2(\R^d)$, with a second order term arising from a purely common noise. We do not assume that the Hamiltonian is convex in the momentum variable, which means that we cannot rely on representation formulas coming from mean field control. In this setting, Gangbo, Mayorga, and \'{S}wi\k{e}ch \cite{GMS} showed via viscosity solutions methods that the HJB equation on $\cP_2(\R^d)$ can be approximated by a sequence of finite-dimensional HJB equations. Our main contribution is to quantify this convergence result. The proof involves a doubling of variables argument, which leverages the Hilbertian approach of P.L. Lions for HJB equations in the Wasserstein space, rather than working with smooth metrics on $\cP_2(\R^d)$ which have been used to obtain similar results in the presence of idiosyncratic noise. In dimension one, our doubling of variables argument is made relatively simply by the rigid structure of one-dimensional optimal transport, but in higher dimensions the argument is significantly more complicated, and relies on some estimates concerning the "simultaneous quantization" of probability measures.
\end{abstract}

\maketitle

\setcounter{tocdepth}{1}
\tableofcontents

\section{Introduction}

This paper is concerned with the Hamilton-Jacobi-Bellman (HJB) equation 
\begin{align} \label{hjb.inf} \tag{$\text{HJB}_{\infty}$}
   \begin{cases} 
   \ds - \partial_t U(t,m) - \kappa \com U(t,m) + \int_{\R^d} H\big(x, D_mU(t,m,x),m \big) m(dx) = 0, \quad (t,m) \in [0,T) \times \cP_2, \vspace{.2cm}
   \\
   \ds  U(T,m) = G(m), \quad m \in \cP_2,
   \end{cases}
\end{align}
where $\cP_2 = \cP_2(\R^d)$ is the Wasserstein space of probability measures on $\R^d$ with finite second moment.
The data for this problem consists of 
\begin{align*}
    H(x,p,m) : \R^d \times \R^d \times \cP_2(\R^d) \to \R, \quad G(m) : \cP_2(\R^d) \to \R, \quad \kappa \geq 0.
\end{align*}
We refer to $H$ as the Hamiltonian, $G$ as the terminal condition, and $\kappa$ as the intensity of the common noise. While we do not make any precise assumptions on $H$ and $G$ at this stage, we emphasize from the start that in this paper we will \textit{not} require that $p \mapsto H(x,p,m)$ is convex, i.e. there may not be a representation formula for the solution in terms of mean field control (MFC). The unknown in \eqref{hjb.inf} is a map $U : [0,T] \times \cP_2 \to \R$. The derivative $D_m U$ appearing in \eqref{hjb.inf} is the $L$-derivative or Wasserstein derivative (see e.g. \cite[Section 5]{CarmonaDelarue_book_I}), and $\com U$ is the "common-noise operator" or "partial Laplacian" defined by 
\begin{align}
   \com U (t,m) = \int_{\R^d} \tr\big(D_{xm} U(t,m,x) \big) m(dx) + \int_{\R^d} \int_{\R^d} \tr\big(D_{mm} U(t,m,x,y) \big) m(dx) m(dy).
\end{align}

The equation \eqref{hjb.inf} formally arises as the limit as $N \to \infty$ of the following sequence of finite-dimensional HJB equations:
\begin{align} \label{hjb.n} \tag{$\text{HJB}_{N}$}
    \begin{cases}
        \ds - \partial_t V^N - \kappa \sum_{i,j = 1}^N \tr\big(D_{x^ix^j} V^N \big) + \frac{1}{N} \sum_{i = 1}^N H\big(x^i, ND_{x^i} V^N, m_{\bx}^N\big) = 0, \quad (t,\bx) \in [0,T) \times (\R^d)^N \vspace{.2cm}
        \\
       \ds  V^N(T,\bx) = G(m_{\bx}^N).
    \end{cases}
\end{align}
Here the unknown is a map $V^N : [0,T] \times (\R^d)^N \to \R$, and we use the notation $m^N_\bx = \frac{1}{N} \sum_{i=1}^N \delta_{x^i}$ for the empirical measure associated to $\bx \in (\R^d)^N$. In this paper, we will follow \cite{GMS} and interpret \eqref{hjb.inf} in the "lifted" sense of P.L. Lions, while \eqref{hjb.n} will be interpreted in the standard viscosity sense.

From a PDE perspective, the connection between \eqref{hjb.inf} and \eqref{hjb.n} can be seen from the fact that, if $U$ is smooth, then the finite-dimensional projections $U^N : [0,T] \times (\R^d)^N \to \R$, $U^N(t,\bx) = U(t,m_{\bx}^N)$ satisfy, see e.g. \cite[Proposition 6.1.1]{cdll},
\begin{align*}
    D_{x^i} U^N(t,\bx) = \frac{1}{N} D_m U(t,m_{\bx}^N), \quad D_{x^ix^j} U^N(t,\bx) = \frac{1}{N} D_{xm} U(t,m_{\bx}^N,x^i) 1_{i = j} + \frac{1}{N^2} D_{mm} U(t,m_{\bx}^N, x^i,x^j).
\end{align*}
From this, one can check that if $U$ is smooth, then $U^N$ solves \eqref{hjb.n}, and so
\begin{equation}\label{exact.projection}
    U(t, m^N_\bx) = V^N(t,\bx) \quad \text{for all } (t,\bx) \in [0,T] \times (\Rd)^N.
\end{equation}
The recent work of \'{S}wie\c{c}h and Wessels \cite{SW} shows, roughly speaking, that \eqref{exact.projection} holds as soon as $U$ is $C^1$.

Interestingly, despite the fact that \eqref{hjb.inf} formally projects exactly onto \eqref{hjb.n}, the exact projection property \eqref{exact.projection} fails in general. As an example (see a similar set-up in \cite[Proposition 6.8]{cdjm}), consider the equation
\begin{equation}\label{hjb.example}
    -\partial_t U + \frac{1}{2} \int_\Rd |D_m U(t,m,x)|^2 m(dx) = 0 \quad \text{in } [0,1) \times \mcl P_2,  \quad U(1,m) = \mbf d_2(\mu,\text{Leb}_Q),
\end{equation}
where $\mbf d_2$ denotes the $2$-Wasserstein distance and $\text{Leb}_Q \in \mcl P_2$ is the uniform measure over the cube $Q = [0,1]^d$, as well as the finite-dimensional counterpart
\begin{equation}\label{hjb.n.example}
    -\partial_t V^N + \frac{N}{2} \sum_{i=1}^N |D_{x^i} V^N|^2 = 0 \quad \text{in } [0,1) \times (\Rd)^N, \quad V^N(1,\bx) = \mbf d_2(m^N_\bx, \text{Leb}_Q).
\end{equation}
Explicit expressions for $U$ and $V^N$ coming from the connection to mean field control (more details on this below) yield
\[
    U(0, m^N_\bx) = \inf_{\nu \in \mcl P_2} \left\{ \mbf d_2(\nu, \text{Leb}_Q) + \frac{1}{2} \mbf d_2^2(m^N_\bx, \nu) \right\}
\]
and
\[
    V^N(0, \bx) = \inf_{\by \in (\Rd)^N} \left\{ \mbf d_2(m^N_\by, \text{Leb}_Q) + \frac{1}{2} \mbf d_2^2(m^N_\bx, m^N_\by) \right\},
\]
from which it is easy to see that
\[
    \sup_{\bx \in (\Rd)^N} \left( V^N(0, \bx) - U(0, m^N_\bx) \right) \ge r_N - \frac{1}{2} r_N^2, \quad 
    r_N = \inf_{\bx \in (\Rd)^N} \mbf d_2(m^N_\bx, \text{Leb}_Q ) > 0.
\]
The interpretation of this discrepancy is that, for MFC problems corresponding to these equations, even for populations starting in the discrete configuration $m^N_\bx$, it is beneficial to diffuse over time so as to best approach the uniform measure $\text{Leb}_Q$ - although the value of this benefit converges to $0$ as $N \to \infty$ at precisely the rate $r_N$.

While the example above suggests that $V^N$ should converge to $U$ at a rate related to certain "quantization errors" like $r_N$, so far only qualitative convergence is known in this setting. In particular, in \cite{GMS}, it was shown that the viscosity solutions $V^N$ of \eqref{hjb.n} converge locally uniformly towards the unique lifted viscosity solution $U$ of \eqref{hjb.inf}, in the sense that for each bounded subset $\mathcal{B}$ of $\cP_2$, we have
\begin{align}
    \lim_{N \to \infty} \sup_{(t,m_{\bx}^N) \in \mathcal{B}} \big| U(t,m_{\bx}^N) - V^N(t,\bx) \big| = 0. \label{convergence}
\end{align}
The goal of this paper is to quantify this convergence.



\subsection{Related literature} In order to explain where our results fit into the literature, we first discuss some variations of \eqref{hjb.inf} which have also been studied in recent years. First, we use the term "convex case" to refer to the case that $p \mapsto H(x,p,m)$ is convex; this should not be confused with convexity in $x$ and/or $m$. In the convex case (at least under standard technical assumptions), we can write $H$ as
\begin{align*}
    H(x,p,m) = \sup_{a \in \R^d} \Big\{ - L(x,a,m) - a \cdot p \Big\}
\end{align*}
for some $L : \R^d \times \R^d \times \cP_2 \to \R$. Then $V^N$ admits the representation formula 
\begin{align*}
    V^N(t_0,\bx_0) = \inf_{\balpha = (\alpha^1,\ldots,\alpha^N)} \E\bigg[ \int_{t_0}^T \frac{1}{N} \sum_{i = 1}^N L\Big(X_t^i, \alpha_t^i , m_{\bX_t}^N \Big) dt + G(m_{\bX_T}^N)\bigg], 
\end{align*}
where $\bX = (X^1,\ldots,X^N)$ is determined from $\balpha = (\alpha^1,\ldots,\alpha^N)$ by the dynamics 
\begin{align*}
    dX_t^i = \alpha^i_t + \sqrt{2 \kappa} dB_t, \quad t_0 \leq t \leq T, \quad X_{t_0}^i = x_0^i, 
\end{align*}
$B$ is a standard Brownian motion, and the infimum is taken over all square-integrable, $(\R^d)^N$-valued processes $\balpha = (\alpha^1,\ldots,\alpha^N)$ progressively measurable with respect to $B$. Similarly, $U$ is the value function of a mean field control (MFC) problem which takes the form 
\begin{align*}
    U(t_0,m_0) = \inf_{\alpha} \E\bigg[ \int_{t_0}^T L\Big(X_t, \alpha_t, \cL(X_t | B) \Big) dt + G(X_T) \bigg], 
\end{align*}
where $X$ satisfies 
\begin{align*}
    dX_t = \alpha_t + \sqrt{2 \kappa} dB_t, \quad X_{t_0} = \xi \sim m_0, \quad X_{t_0} \perp B. 
\end{align*}
This connection to MFC makes the convex case significantly different from the non-convex case, where one must rely on purely PDE techniques to study \eqref{hjb.inf} and its connection to \eqref{hjb.n}. 

In addition to distinguishing between the convex and nonconvex settings, we also point out that different noise structures are possible. In particular, the operator $\com$ represents a common noise; in the convex case, this means that each of the particles $X^i$ is impacted by the same Brownian motion $B$. It is also typical to consider problems with a non-degenerate idiosyncratic noise. In terms of \eqref{hjb.inf}, this would mean that $\kappa \com$ is replaced by $\Delta_{\text{id}} + \kappa \com$, where $\Delta_{\text{id}}$ is the idiosyncratic noise operator
\begin{align*}
    \Delta_{\text{id}} U(t,m) = \int_{\R^d} \tr \big( D_{xm} U(t,m,x) \big) m(dx). 
\end{align*}
In the convex case, this would mean that the term $\sqrt{2 \kappa} dB$ appearing in the dynamics of $X^i$ would be replaced by $\sqrt{2} dW^i + \sqrt{2\kappa} dB$, where $(W^i)_{i = 1,\ldots,N}$ are independent Brownian motions independent of $B$. 

Finally, it is also worth pointing out that many previous papers work in a periodic setting, by replacing $\R^d$ everywhere with the $d$-dimensional flat torus $\T^d = \R^d / \mathbb{Z}^d$. We refer to this as the periodic or compact case, as opposed to the non-compact case treated in the present paper.

We have thus outlined several axes along which HJB equations on the Wasserstein space can be placed: convex vs. non-convex Hamiltonian, non-degereate vs. degenerate idiosyncratic noise, with common noise vs. without commmon noise, and compact/periodic vs. noncompact. With this terminology in hand, we will now discuss the existing literature related to the well-posedness of \eqref{hjb.inf} and the convergence of $V^N$ to $U$.
\newline \newline 
\noindent 
\textit{HJB equations on spaces of measures.} Equations of the form \eqref{hjb.inf} have received significant attention in recent years, in large part because of their connection to mean field control. A large body of literature has been devoted to developing an appropriate notion of viscosity sub/supersolution and to establishing a comparison principle. We refer to \cite{GT_19, soneryan2024, soneryan2024B, BayraktarEkrenZhangCPDE, BayraktarEkrenHeZhang, touzizhangzhou, bertucci2023, confortihj, daudinseeger, DJS} for a sample of recent efforts in this direction. One of the reasons for the huge literature on the topic is that, unlike in finite dimensions, there are several reasonable definitions of viscosity solutions for equations set on $\cP_2$. Moreover, both the definition which is most convenient and the strategy of proof depends on the factors discussed above; for example the arguments in \cite{cossogozzikharroubi, soneryan2024B, daudinseeger, DJS, BayraktarEkrenZhangCPDE} are tailored to equations with non-degenerate idiosyncratic noise, while \cite{GT_19, bertucci2023} focus on problems with a purely common noise (as in the present paper), and \cite{cossogozzikharroubi} uses the connection to MFC available in the convex case. Most relevant to the present paper are \cite{GT_19, GMS}, where equations with purely common noise (as in \eqref{hjb.inf}) are treated via the "lifting" approach of P.L. Lions. This method will be discussed in detail below; we emphasize for now that it so far has proved very efficient for problems with purely common noise, but has not been adapted to treat problems with idiosyncratic noise. 
\newline \newline 
\noindent 
\textit{Qualitative convergence.}  In the convex case, the statement \eqref{convergence} is intimately related to the convergence problem in mean field control, and the convergence of $V^N$ to $U$ can be approached by probabilistic compactness arguments. This approach was first used by Budhiraja, Dupuis, and Fischer \cite{BDF} in the context of large deviations for interacting particle systems, and later extended to general MFC problems by Lacker \cite{Lacker} and Djete, Possamai, and Tan \cite{DPT}, the latter incorporating both idiosyncratic and common noise. Meanwhile, as discussed above \cite{GMS} establishes \eqref{convergence} for the exact model treated here (non-convex, with common noise but without idiosyncratic noise) via a viscosity solutions argument: first they show that the sequence $(V^N)_{N \in \N}$ is equicontinuous in an appropriate sense, then they show that all of its limit points solve \eqref{hjb.inf} in the "lifted" viscosity sense, and finally they conclude \eqref{convergence} from the uniqueness of viscosity solutions. A similar PDE argument is executed in \cite{DJS} for non-convex problems with both idiosyncratic and common noise. 
\newline \newline 
\noindent 
\textit{Quantitative convergence.} The first general quantitative convergence result came in \cite{CDJS}, which established a bound of the form 
\begin{align} \label{cdjsrate}
    |V^N(t,\bx) - U(t,m_{\bx}^N)| \leq C \Big(1 + \frac{1}{N} \sum_{i = 1}^N |x^i|^2 \Big) N^{-\gamma_d}, 
\end{align}
with $\gamma_d \in (0,1)$ a non-explicit dimensional constant. This result was obtained in the convex case, with non-degenerate idiosyncratic noise and common noise. Since then, there have been many efforts to sharpen and generalize the result \eqref{cdjsrate}, and in particular understand the optimal exponent $\gamma_d$. We mention in particular \cite{ddj2023, cjms2023, cdjm, BEZ} for developments in this direction. For the present discussion, the important thing to point out is that the existing quantitative convergence results all require convexity of $p \mapsto H(x,p,m)$ and/or non-degenerate idiosyncratic noise. In particular, none apply to the equation \eqref{hjb.inf}.

\subsection{Our methods} \label{subsec.methods} As discussed above, the existing quantitative convergence results all require either a convex (in $p$) Hamiltonian or non-degenerate idiosyncratic noise. This is because the arguments in \cite{CDJS, ddj2023, cdjm, BCC} either use explicitly control-theoretic arguments, or they make use of certain "smooth metrics" on $\cP(\T^d)$, like the one obtained by embedding $\cP(\T^d)$ into the (negative index) Sobolev space $H^{-s}$, $s > \frac{d}{2} + 2$. The arguments based on smooth metrics in \cite{ddj2023, cdjm, BCC} do not extend easily to the case of degenerate idiosyncratic noise; this is linked to the fact that one cannot expect Lipschitz regularity with respect to $H^{-s}$ without nondegenerate idiosyncratic noise, even when all the data is smooth, as discussed in \cite{cdjm}. 

On the other hand, for problems with a purely common noise (as in \eqref{hjb.inf}), the "lifting" approach used in \cite{GT_19 ,GMS} has proved very efficient. Thus, it seems natural to try to "quantify" the lifting-based arguments from \cite{GT_19, GMS} in order to obtain quantitative convergence in the case with purely common noise. This is the approach we take in the present paper. In order to explain our strategy in detail, consider a simplified, stationary version of the equations for $V^N$ and $U$, without common noise:
\begin{align} \label{hjb.inf.stat}
    U(m) + \int_{\R^d} H\big(D_m U(m,x)\big) m(dx) = F(m), \quad m \in \cP_2, 
\end{align}
and 
\begin{align} \label{hjb.n.stat}
    V^N(\bx) + \frac{1}{N} \sum_{i = 1}^N H\big( ND_{x^i} V^N(\bx) \big) = F(m_{\bx}^N), \quad \bx \in (\R^d)^N,  
\end{align}
where $H : \R^d \to \R$ and $F : \cP_2 \to \R$ are both Lipschitz for simplicity. 
Let $(\Omega, \cF, \bP)$ be a non-atomic probability space, and $\cH = L^2(\Omega; \R^d)$. 
Roughly speaking, using the lifting approach means that we define $U$ to be a viscosity solution of \eqref{hjb.inf.stat} if its "lift" 
\begin{align*}
    \hat{U} : \cH \to \R, \quad \hat{U}(X) = U\big(\cL(X) \big)
\end{align*}
satisfies 
\begin{align} \label{hjb.hilbert.stat}
    \hat{U}(X) + \hat{H}\big( \nabla \hat{U}(X) \big) = F\big(\cL(X)\big), \quad X \in \cH, 
\end{align}
where $\hat{H}(P) = \E\big[ H(P) \big]$. A natural way to attempt to quantify the arguments in \cite{GMS} is as follows. Ignore compactness issues, and suppose we can find a maximum point $(\ov X, \ov \bx)$ of 
\begin{align*}
  \cH \times (\R^d)^N \ni (X, \bx) \mapsto \hat{U}(X) - V^N(\bx) - \frac{1}{2\eps} \| X - Y_{\bx}^N\|_2, 
\end{align*}
where $Y_{\bx}^N = \sum_{i = 1}^N x^i 1_{\Omega^i}$, and $\Pi_N = \{\Omega^1,\ldots,\Omega^N\}$ is a partition of $\Omega$ into $N$ sets of equal probability. Then, applying the viscosity subsolution test $\hat{U}$ and the super-solution test for $V^N$, and computing the derivatives of $\bx \mapsto \frac{1}{2\eps} \| X - Y_{\bx}^N\|_2^2$ via Lemma \ref{lem.projection.deriv} below, we arrive at 
\begin{align*}
    \sup_{\bx} &\Big \{ U(m_{\bx}^N) - V^N(\bx) \Big\} \leq \hat{U}(\ov X) - V^N(\ov \bx)
    \\
    &\leq \hat{H}\Big( \E\Big[ \frac{\ov X - Y_{\ov \bx}^N}{\eps} | \cF_{\Pi_N} \Big] \Big) - \hat{H}\Big( \frac{\ov X - Y_{\ov \bx}^N}{\eps} \Big) + F(\cL(\ov{X})) - F(m_{\ov \bx}^N)
    \\
    &\lesssim \eps^{-1} \| \ov{X} - \E[\ov X | \cF_{\Pi_N}] \|_1 + \|\ov{X} - Y_{\ov \bx}^N\|_2
    \\
    &\lesssim \eps^{-1} \| \ov{X} - \E[\ov X | \cF_{\Pi_N}] \|_1 + \eps, 
\end{align*}
where $\cF_{\Pi_N}$ is the discrete $\sigma$-algebra generated by $\Pi_N$, and the last bound holds if $U$ is Lipschitz with respect to $\bd_2$, by a standard argument. The question is thus whether it is possible to choose $\eps = \eps_N$ going to zero in such a way that $\eps_N^{-1} \| \ov{X} - \E[\ov X | \cF_{\Pi_N}] \|_1$ vanishes with $N$. The best one can hope for is that $\|\ov{X} - \E[\ov X | \cF_{\Pi_N}] \|_1$ might be close to the "quantization error" 
\begin{align*}
    \inf_{\Pi \in P_N} \norm{ \ov X - \E\big[ X | \cF_{\Pi} \big] }_{1}, 
\end{align*}
where the infimum is taken over the set $P_N$ of partitions of $\Omega$ into $N$ sets of equal measure. It is known that this quantization error is of order $N^{-1/d}$, at least when the $\linf$ norm of $\ov X$ is controlled (a point which we ignore in this discussion). Plugging this in above and choosing $\eps_N = N^{-1/(2d)}$, we see that this argument could at best lead to the rate $N^{-1/(2d)}$, if we are lucky and $ \norm{ \ov X - \E\big[ \ov X | \cF_{\Pi} \big] }_{1}$ is comparable to $ \inf_{\Pi \in P_N} \norm{ X - \E\big[ X | \cF_{\Pi} \big] }_{1}$. We use the term "quantization error" here because of the connection to the optimal quantization of probability measures, i.e. the problem of approximating a general probability measure $m$ by a measure supported on at most $N$ points, as surveyed in \cite{DSS}. The relevant quantization problem here is slightly different than the one studied in \cite{DSS}, since we need to approximate $m$ by an empirical measure (in which each atom has equal mass); more on this distinction in Section \ref{sec.simquant} below.

It turns out that the argument outlined above works (after updates to account for the time-dependence, the common noise, and the lack of compactness) only when $d = 1$, because in this case the rigid structure of optimal transport in dimension $1$ guarantees that we can find a maximum point $(\ov X, \ov Y)$ such that  $\|\ov X - \E\big[ X | \cF_{\Pi_N} ]\|_{\infty}$ vanishes with $N$. When $d \geq 2$, this argument breaks down completely, and it is not clear to us how to obtain a useful bound on $\eps^{-1} \| \ov{X} - \E[\ov X | \cF_{\Pi_N}] \|_1$. 

To circumvent this issue, we start by extending $V^N$ in a Lipschitz way to a map $\wt{V}^N : \cP_2 \to \R$, i.e. we find $\wt V^N$ such that $\wt{V}^N(m_{\bx}^N) = V^N(\bx)$, and then we define $\hat{V} : \cH \to \R$ by $\hat{V}^N(X) = \wt{V}^N(\cL(X))$. We then look for a maximum point $(\ov X, \ov Y)$ (again ignoring compactness issues for now) of 
\begin{align} \label{doubling.intro}
  \cH \times \cH \ni (X,Y) \mapsto \hat{U}(X) - \hat{V}^N(Y) - \frac{1}{2\eps} \| X - Y\|_2^2. 
\end{align}
Of course, $\hat{V}^N$ does not satisfy an equation, so we cannot obtain an upper bound directly from this doubling of variables; instead  we must then consider a second optimization problem. We fix a partition $\Pi_N = \{\Omega^1,\ldots,\Omega^N\}$ as above, and look for a minimum of 
\begin{align*}
  \bx \mapsto V^N(\bx) + \frac{1}{2\eps} \| \ov X - Y_{\bx}^N \|_2 + \delta h\big( \| Y_{\bx}^N - \ov Y\|_2\big), 
\end{align*}
where we again set $Y_{\bx}^N = \sum_{i = 1}^N x^i 1_{\Omega^i}$, and
with $h$ an appropriate penalization function which forces the maximum point $\ov \bx$ to satisfy $Y_{\ov \bx}^N \approx \ov Y$. Then we apply the subsolution test to $\hat{U}$ at $\ov X$, and the supersolution test for $V^N$ at $\ov \bx$, and try to estimate the errors; the key point here is that because $\Pi_N$ does not appear in \eqref{doubling.intro}, it can be chosen as a parameter after $\ov X$ and $\ov Y$ have been fixed, and this helps to control the relevant errors, which turn out to involve terms like 
\begin{align} \label{sim.quant}
    \norm{ \E\Big[\frac{ \ov X - \ov Y}{\eps} | \cF_{\Pi_N} \Big] - \frac{ \ov X - \ov Y}{\eps} }_1, \quad \norm{\ov Y - \E \big[ \ov Y | \cF_{\Pi_N} \big]}_2, 
\end{align}
along with other terms involving $\delta$, $\eps$, and other parameters. In particular, it becomes important to choose $\Pi_N$ in such a way that both of the terms in \eqref{sim.quant} are small, which leads us to a "simultaneous quantization" problem discussed in Section \ref{sec.simquant}. In the end, we succeed in choosing the various parameters, including the partition $\Pi_N$, as functions of $N$ in order to obtain an upper bound on $V^N - U$. 

The actual argument is fairly involved, because we need to deal with the time dependence, the compactness, and the common noise, with the common noise in particular adding considerable complexity. The argument above shows that the best rate we can hope to obtain from this argument (at least for a merely Lipschitz Hamiltonian) is $N^{-1/(2d)}$. Because of the various additional technicalities faced in the argument, we end up with something slightly worse; in the case when $d \geq 3$ and $\kappa = 0$, for instance, we get a rate of $N^{-1/(5d)}$. 

\subsection{Assumptions and main results}

The basic assumption under which we prove error estimates reads as follows:

\begin{assumption} \label{assump.main}
    There are constants $C_H, C_G > 0$ such that
    \begin{align}
       &|H(x,p,m)|\leq C_H \big(1 + |p|^2 \big), \label{ass:quadraticgrowthH}
       \\
       &|H(x,p,m) - H(x',p',m')| \leq C_H \big(1 + |p| \big)\Big(|x -x'| + |p-p'| + \bd_1(m,m') \Big), \label{normalHreg}
    \end{align}
    for all $x,x',p,p' \in \R^d$ and $m,m' \in \cP_2(\R^d)$, and
    \begin{align}
       |G(m)| \leq C_G, \quad  |G(m) - G(m')| \leq C_G \bd_1(m,m')
    \end{align}
    for each $m,m' \in \cP_2(\R^d)$. 
\end{assumption}

\begin{rmk}
    The assumptions on the growth of $H$ and $D_p H$ are exactly the same as those in \cite[Hypothesis 2.1]{GMS}. For the continuity of $H$ and $G$ in $(x,m) \in \Rd \times \cP_2$, we consider the special case of Lipschitz regularity with respect to $\mbf d_1$. Error estimates for other moduli of continuity and for other Wasserstein metrics can be established through a careful adaptation of the methods in this work; in order to keep the presentation as clean as possible, we focus on Assumption \ref{assump.main}.
\end{rmk}

Our main result is as follows:

\begin{thm} \label{thm.main}
    Let Assumption \ref{assump.main} hold. Then there is a constant $C$ such that for all $N \in \N$, and all $(t,\bx) \in [0,T] \times (\R^d)^N$, we have 
    \begin{align} \label{rates.common}
       | V^N(t,\bx) - U(t,m_{\bx}^N) | \leq C \Big(1 + \frac{1}{N} \sum_{i = 1}^N |x^i|^2 \Big) \times \begin{cases}
           N^{-\frac{1}{3}} & d = 1, 
           \\
           N^{- \frac{1}{14}} \log(1 + N)^{1/7} & d = 2
           \\
           N^{- \frac{1}{7d}} & d \geq 3.
       \end{cases}
    \end{align}
    Moreover, if $\kappa = 0$, then the rates can be improved in dimension $d \geq 2$ as follows:
    \begin{align} \label{rates.nocommon}
       | V^N(t,\bx) - U(t,m_{\bx}^N) | \leq C \Big(1 + \frac{1}{N} \sum_{i = 1}^N |x^i|^2 \Big)  \times \begin{cases}
           N^{-\frac{1}{3}} & d = 1, 
           \\
           N^{- \frac{1}{10}} \log(1 + N)^{1/5} & d = 2
           \\
           N^{- \frac{1}{5d}} & d \geq 3.
       \end{cases}
    \end{align}
\end{thm}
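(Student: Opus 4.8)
The plan is to establish the two inequalities $V^N - U \lesssim (\text{rate})$ and $U - V^N \lesssim (\text{rate})$ separately, by doubling-of-variables arguments carried out in the Hilbert space $\cH = L^2(\Omega;\R^d)$, quantifying the compactness-based proof of \cite{GMS} along the lines sketched in Section \ref{subsec.methods}. I will concentrate on the bound $V^N - U \lesssim (\text{rate})$, which is the more delicate direction; the reverse inequality follows from an analogous argument with the roles of sub- and supersolution exchanged. As a first step I would record the a priori $\bd_1$-Lipschitz and quadratic-growth estimates for $V^N$ and $U$ that follow from Assumption \ref{assump.main} by standard arguments, then extend $V^N(t,\cdot)$ to a $\bd_1$-Lipschitz function $\wt V^N(t,\cdot):\cP_2\to\R$ with $\wt V^N(t,m^N_\bx)=V^N(t,\bx)$, and lift by setting $\hat V^N(t,X)=\wt V^N(t,\cL(X))$ and $\hat U(t,X)=U(t,\cL(X))$.

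With parameters $\eps,\delta,\beta,\eta>0$ to be tuned as powers of $N$, I would next study the maximum over $(t,s,X,Y)$ of
\[
    \hat U(t,X) - \hat V^N(s,Y) - \tfrac{1}{2\eps}\|X-Y\|_2^2 - \tfrac{1}{2\eta}|t-s|^2 - \beta\big(1+\|X\|_2^2+\|Y\|_2^2\big) - (\text{further penalizations}),
\]
where the $\beta$ term and the further penalizations guarantee the existence of a maximizer (there is no compactness) and provide moment and $L^\infty$ bounds on the maximizers $(\bar t,\bar s,\bar X,\bar Y)$, and where the common-noise operator $\com$ is accommodated by also doubling the Brownian path and inserting the usual second-order penalization --- this last point is where the argument becomes heavy when $\kappa>0$. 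Since $\hat V^N$ satisfies no equation, I would then freeze $(\bar s,\bar X,\bar Y)$, fix a partition $\Pi_N=\{\Omega^1,\dots,\Omega^N\}$ of $\Omega$ into sets of equal probability, and minimize over $\bx\in(\R^d)^N$ the quantity $V^N(\bar s,\bx)+\tfrac{1}{2\eps}\|\bar X-Y^N_\bx\|_2^2+\delta\,h(\|Y^N_\bx-\bar Y\|_2)$, with $Y^N_\bx=\sum_i x^i\ind_{\Omega^i}$ and $h$ a penalization forcing the minimizer $\bar\bx$ to satisfy $Y^N_{\bar\bx}\approx\bar Y$. Applying the viscosity subsolution inequality for $\hat U$ at $\bar X$ and the supersolution inequality for $V^N$ at $\bar\bx$, computing the $\bx$-derivatives of $\bx\mapsto\tfrac{1}{2\eps}\|\bar X-Y^N_\bx\|_2^2$ through Lemma \ref{lem.projection.deriv}, and comparing the Hamiltonian contributions, I would be left with error terms whose dominant pieces are the simultaneous-quantization quantities
\[
    \Big\|\E\big[\tfrac{\bar X-\bar Y}{\eps}\,\big|\,\cF_{\Pi_N}\big]-\tfrac{\bar X-\bar Y}{\eps}\Big\|_1,\qquad \big\|\bar Y-\E[\bar Y\mid\cF_{\Pi_N}]\big\|_2,
\]
together with $O(\delta)$, $O(\eps)$, and contributions from $\beta$, $\eta$, and the common-noise penalization.

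The key point is that $\Pi_N$ does not appear in the first optimization, so it may be chosen \emph{after} $\bar X,\bar Y$ are fixed: using the simultaneous-quantization estimates of Section \ref{sec.simquant}, together with the $L^\infty$ control on $\bar X,\bar Y$ built into the penalizations, I would select $\Pi_N$ making the two displayed quantities of order $\eps^{-1}N^{-1/d}$ and $N^{-1/d}$ respectively (up to logarithmic factors when $d=2$). Inserting these bounds and optimizing $\eps,\delta,\beta,\eta$ and the remaining parameters as suitable negative powers of $N$ balances all contributions and yields the stated exponents, with the prefactor $1+\frac1N\sum_i|x^i|^2$ recording the dependence on the base point forced by the non-compactness. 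In dimension $d=1$ the second optimization step is unnecessary: the rigidity of one-dimensional optimal transport (monotone rearrangement) allows one to produce a maximizer for which $\|\bar X-\E[\bar X\mid\cF_{\Pi_N}]\|_{L^\infty}\to 0$ directly, so that $\eps^{-1}\|\bar X-\E[\bar X\mid\cF_{\Pi_N}]\|_1$ is controlled without the auxiliary penalization, giving the sharper rate $N^{-1/3}$; and when $\kappa=0$ several of the common-noise penalizations disappear, which is what permits the improvement to $N^{-1/(5d)}$ for $d\ge 2$.

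The main obstacle, as flagged in the abstract, is precisely the interplay between the second-order common-noise term and the doubling of variables in infinite dimensions: one must run the Hilbertian Crandall--Ishii-type argument for a problem that simultaneously carries the auxiliary partition optimization, while keeping enough quantitative control (moment and $L^\infty$ bounds) on the maximizers to feed the simultaneous-quantization estimates and then choosing all parameters consistently. This is what degrades the rate from the heuristic optimum $N^{-1/(2d)}$ to the exponents appearing in \eqref{rates.common} and \eqref{rates.nocommon}.
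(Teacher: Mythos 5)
Your outline follows essentially the same route as the paper: Lipschitz extension of $V^N$ to $\wt V^N$, lifting to $\cH$, a first doubling of variables between $\hat U$ and $\hat V^N$ with quadratic and moment penalizations, a second optimization over $\bx\in(\R^d)^N$ with a penalization $\delta h(\|Y^N_{\bx}-\ov Y\|_2)$ forcing $Y^N_{\ov\bx}\approx\ov Y$, the choice of the partition $\Pi$ \emph{after} the maximizer is fixed, and the monotone-rearrangement shortcut in $d=1$. The minor structural differences (you double the time variable with $\tfrac{1}{2\eta}|t-s|^2$ whereas the paper keeps a single time variable and instead forces $\ov t$ toward $T$ via a $\lambda(T-t)$ penalization combined with a terminal-layer comparison estimate; you speak of "doubling the Brownian path" whereas the common noise is actually handled by the Hilbertian Crandall--Ishii lemma restricted to the finite-dimensional subspace of constant random vectors) are cosmetic at the level of detail you give.

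There is, however, one genuine gap, and it sits exactly where the stated exponents are decided. You claim that $\Pi_N$ can be chosen so that $\|\ov Y-\E[\ov Y\mid\cF_{\Pi_N}]\|_2$ and $\eps\,\|\E[\tfrac{\ov X-\ov Y}{\eps}\mid\cF_{\Pi_N}]-\tfrac{\ov X-\ov Y}{\eps}\|_1$ are \emph{both} of order $N^{-1/d}$. This is not achievable: quantizing two $\R^d$-valued random variables with a single $N$-cell partition is at best a quantization problem in $\R^{2d}$, which gives the rate $N^{-1/(2d)}$ for both, not $N^{-1/d}$. The paper's Proposition \ref{prop.quantization.sim} instead splits the budget $N\approx N_1N_2$ with $N_1=\lfloor N^{\alpha}\rfloor$, $N_2=\lfloor N^{1-\alpha}\rfloor$, obtaining $\rho_N(\ov Y,\Pi)\lesssim \|\ov Y\|_\infty r_{\lfloor N^{\alpha}\rfloor,d}$ and $\rho_N(\tfrac{\ov X-\ov Y}{\eps},\Pi)\lesssim r_{\lfloor N^{1-\alpha}\rfloor,d}$; the two errors enter the final inequality with very different weights (one is divided by $\delta\eps^2\eta^2$, the other appears linearly), and the optimization over $\alpha$ together with the choices $\eta=\eps$, $\beta=\eps^2$, $\delta=\eps^3$, $\eps=r_{\lfloor N^\alpha\rfloor,d}^{1/6}$ is precisely what produces $N^{-1/(7d)}$ (resp.\ $N^{-1/(5d)}$ when $\kappa=0$). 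With your claimed $N^{-1/d}$ for both terms, the parameter balancing would yield different (better) exponents, so as written your argument does not actually land on the rates in the statement; you need the interpolated simultaneous-quantization lemma and the final optimization over the split parameter $\alpha$ to close the proof.
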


\begin{rmk}
    In light of Proposition 6.8 of \cite{cdjm}, the rate cannot be faster than $N^{-1/d}$ in general. On the other hand, this example involves a purely quadratic Hamiltonian, and the heuristic argument explained in Subsection \ref{subsec.methods} suggests that (at least when $H$ is only Lipschitz), our doubling of variables argument cannot achieve a better rate than $N^{-1/(2d)}$. In any case, the rates achieved in Theorem \ref{thm.main} are almost certainly not optimal, and we discuss in Section \ref{sec.improvements} below how to improve them under additional assumptions on $H$ and $G$. The best rate we can get from our proof techniques comes when $\kappa = 0$, the data is periodic, and $H$ is $C^{1,1}$ in $p$; in this case we can obtain the rate $N^{-2/(7d)}$. 
\end{rmk}

\subsection{Organization of the paper}

In Section \ref{sec.prelim}, we introduce the relevant notation and discuss the definition of L-viscosity solution for \eqref{hjb.inf}, and also prove some relevant facts about quantization. In Section \ref{sec.main}, we present the proof of Theorem \ref{thm.main}, first in the case $d =1$ (Subsection \ref{subsec.d1}, then in the case $d \geq 2$ (Subsection \ref{subsec.main.ge2}).

\section{Preliminaries}  \label{sec.prelim}

\subsection{Notation} We fix throughout the paper the time horizon $T > 0$ and the dimension $d \ge 1$. Throughout the paper, we work for concreteness with the probability space
\[
    (\Omega, \cF, \bP) = \big([0,1], \mathcal{B}([0,1]), \text{Leb}\big),
\]
i.e. the unit interval equipped with the Borel $\sigma$-algebra and the Lebesgue measure. For most of the paper the choice of probability space plays absolutely no role, but it makes some of the arguments about quantizing one-dimensional random variables more transparent. For $1 \leq p \leq \infty$ and $k \in \N$, we denote by $L^p(\Omega; \R^k)$ the Lebesgue space of $p$-integrable, $k$-dimensional, random vectors with the usual norm 
\begin{align*}
    \| X \|_p = \E\big[ |X|^p \big]^{1/p}, \quad  1 \leq p < \infty, \quad  \| X\|_{\infty} =  \esssup |X|.
\end{align*}
We also use the notation $\cH = L^2(\Omega ; \R^d)$, which is a separable Hilbert space endowed with the inner product 
\begin{align*}
    \langle X, Y \rangle = \E\big[ X \cdot Y \big] = \int_0^1 X(\omega) \cdot Y(\omega) d\omega. 
\end{align*}

Given a Hilbert space $H$ with inner product $\langle \cdot, \cdot \rangle$ (possibly different from $\cH$), 
we denote by $\sym (H)$ the space of symmetric bilinear forms on $H$. Given $\cA \in \sym(H)$, we use the notation 
\begin{align*}
  H \times H \ni (u,v) \mapsto \cA [u,v] \in \R
\end{align*}
for the action of $\cA$ on the vectors $u$ and $v$. We write $\cA \leq \cB$ for the usual partial order on $\text{Sym}(H)$. Each $\cA \in \sym(H)$ is identified with a bounded linear operator $H \to H$ in the usual way, and we will abuse notation by using the same symbol for both objects, i.e. 
\begin{align*}
    \langle u, \cA v \rangle = \cA[u,v], \quad u,v \in \cH. 
\end{align*}
We write $\| \cA \|_{\op}$ for the operator norm of $\cA$ when viewed as a linear operator. Given a differentiable function $\Phi : H \to \R$, we denote by $\nabla \Phi$ the gradient of $\Phi$, i.e. $\nabla \Phi(X) \in H$ satisfies 
\begin{align*}
    \Phi(X + Y) = \Phi(X) + \langle \nabla \Phi(X), Y \rangle + o(\|Y\|_2). 
\end{align*}
We use $\nabla^2 \Phi$ to denote the Hessian of $\Phi$ if it exists, i.e. $\nabla^2 \Phi(X) \in \sym(H)$ satisfies 
\begin{align*}
    \langle \nabla \Phi(X + Y), Z \rangle = \langle \nabla \Phi(X), Z \rangle + \nabla^2 \Phi(X) [Y,Z] + o(\|Y\|). 
\end{align*}
We use similar notations for functions $\Phi : [0,T] \times \cH \to \R$. 

We also work extensively in this paper with the Wasserstein space $\cP_2 = \cP_2(\R^d)$, the space of Borel probability measures $m$ on $\R^d$ with 
\begin{align}
\label{defn:M2}
    M_2(m) \coloneqq \int_{\R^d} |x|^2 m(dx) < \infty. 
\end{align}
We endow $\cP_2$ with the $2$-Wasserstein distance $\bd_2$, defined by 
\begin{align*}
    \bd_2(m,n) = \inf_{\gamma \in \Pi(m,n)} \Bigl \{  \int_{\R^d \times \R^d} |x-y|^2 \gamma(dx,dy) \Bigr \}^{1/2}  = \inf_{\substack{X,Y \in \cH \\ \cL(X) = m, \, \cL(Y) = n}} \|X-Y\|_2, 
\end{align*}
where $\Pi(m,n)$ is the set of all probability measures $\gamma$ on $\R^d \times \R^d$ with $\big((x,y) \mapsto x \big)_{\#} \gamma = m$ and $\big((x,y) \mapsto y \big)_{\#} \gamma = n$, and $\cL(X) = X_{\#} \bP$ is the law of a random variable $X$. Given $\Phi : \cP_2 \to \R$, we denote by $\hat{\Phi}$ its "lift" to $\cH$, defined by 
\begin{align*}
    \hat{\Phi}(X) = \Phi\big( \cL(X) \big), \quad X \in \cH. 
\end{align*}
If $\Phi$ is smooth enough, we denote by $D_m \Phi$ its "intrinsic" or ``Wasserstein'' derivative, i.e. $D_m \Phi : \cP_2 \times \R^d \to \R^d$ satisfies 
\begin{align*}
    D_m \Phi\big( \cL(X), X \big) = \nabla \hat{\Phi}( X ), \quad X \in \cH. 
\end{align*}
We refer to \cite{CarmonaDelarue_book_I} Chapter 5 for details regarding the various notions of derivatives over the Wasserstein space.
We use similar notations for functions $\Phi : [0,T] \times \cP_2 \to \R$. 

Finally, for $N \in \N$, we will often work with the space $(\R^d)^N$. We write $\bx = (x^1,\ldots,x^N)$ for the general element of this space, with $x^i \in \R^d$, and we can expand further by writing $x^i = (x^i_1,\ldots,x^i_d)$ if necessary. For functions $\phi : (\R^d)^N \to \R$, we write $D_{x^i} \phi $ for the gradient of in the variable $x^i$, i.e. $D_{x^i} \phi(\bx) = (D_{x^i_1} \phi(\bx), \ldots, D_{x^i_d} \phi(\bx)) \in \R^d$. Similarly, for $i,j = 1,\ldots,N$, we write $D_{x^ix^j} \phi$ for the element of $\R^{d \times d}$ given by $\big(D_{x^ix^j} \phi(\bx)\big)_{q,r} =  D_{x^i_q x^j_r} \phi(\bx)$, $q,r = 1,\ldots,d$. The full Hessian $D^2 \phi$ takes values in $(\R^{d \times d})^{N \times N} \cong \R^{Nd \times Nd}$, with $(D^2 \phi)_{i,j} = (D_{x^ix^j} \phi)_{i,j = 1,\ldots,N}$.

\subsection{$L$-viscosity solutions}

The motivation for the definition of $L$-viscosity solutions comes from the fact if $U$ satisfies \eqref{hjb.inf}, then formally its "lift"
\begin{align*}
    \hat{U} : [0,T] \times \cH \to \R, \quad \hat{U}(t,X) = U\big( t, \cL(X) \big)
\end{align*}
satisfies the equation 
\begin{align} \label{hjb.hilbert} \tag{$\text{HJB}_{\cH}$}
  \begin{cases}
      \ds  - \partial_t \hat{U}(t,X) - \kappa \sum_{k = 1}^d D^2 \hat{U}(t,X) \big[e_k, e_k \big] + \hat{H}\big(X, D \hat{U}(t,X) \big) = 0, \quad (t,X) \in [0,T) \times \cH, 
     \vspace{.2cm}  \\
    \ds   \hat{U}(T,X) = \hat{G}(X), \quad X \in \cH,
  \end{cases} 
\end{align}
 where $e_1,\ldots,e_d$ are the standard unit basis vectors in $\R^d$ (which embed into $\cH$ in the obvious way, as constant random vectors) and we define $ \hat{H}(X,P) : \cH \times \cH \to \R$ and $\hat{G}(X) : \cH \to \R$ by the formulas
\begin{align*}
   \hat{H}(X,P) = \E\Big[ H\big(X,P,\cL(X)\big) \Big], \quad \hat{G}(X) = G\big( \cL(X) \big).
\end{align*}
We can define viscosity (sub/super)-solutions of \eqref{hjb.hilbert} in a standard way, see e.g. \cite{LIONS19891}.

\begin{defn} \label{defn.hilbertviscosity}
    An upper semicontinuous function $V : [0,T] \times \cH \to \R$ is a viscosity subsolution of \eqref{hjb.hilbert} if for each $\Phi \in C^{1,2}([0,T] \times \cH)$ and each $(t_0,X_0) \in [0,T) \times \cH$ such that 
    \begin{align} \label{touchingfromabove}
        V(t_0,X_0) - \Phi(t_0,X_0) = \sup_{t,X \in [0,T] \times \cH} \Big\{ V(t,X) - \Phi(t,X) \Big\}, 
    \end{align}
    we have 
    \begin{align*}
        - \partial_t \Phi(t_0,X_0) - \kappa \sum_{k = 1}^d D^2 \Phi(t_0,X_0)(e_k,e_k) + \hat{H}\big(X_0, D \Phi(t_0,X_0) \big) \leq 0.
    \end{align*}
    Likewise, a lower semicontinuous function $V : [0,T] \times \cH \to \R$ is a viscosity super-solution of \eqref{hjb.hilbert} if for each $\Phi \in C^{1,2}([0,T] \times \cH)$ and each $(t_0,X_0) \in [0,T) \times \cH$ such that 
    \begin{align} \label{touchingfrombelow}
        V(t_0,X_0) - \Phi(t_0,X_0) = \inf_{t,X \in [0,T] \times \cH} \Big\{ V(t,X) - \Phi(t,X) \Big\}, 
    \end{align}
    we have 
    \begin{align*}
        - \partial_t \Phi(t_0,X_0) - \kappa \sum_{k = 1}^d D^2 \Phi(t_0,X_0)(e_k,e_k) + \hat{H}\big(X_0, D \Phi(t_0,X_0) \big) \geq 0.
    \end{align*}
\end{defn}

\begin{rmk} \label{rmk.jets}
    It is a standard fact (see e.g. \cite{LIONS19891, CKS}) that Definition \ref{defn.hilbertviscosity} can be equivalently written in terms of the parabolic sub-jets or super-jets of the function $V$. More precisely, given $V : [0,T] \times \cH \to \R$, and $(t_0,X_0) \in [0,T) \times \cH$, we define $J^{2,+}V(t_0,X_0)$ to be the set of $(r,p,Q) \in \R \times \cH \times \text{Sym}(\cH)$ such that
    \[
        V(t,X) - V(t_0,X_0) \le r(t-t_0) + \langle p, X - X_0 \rangle + \frac{1}{2} Q[ X - X_0, X - X_0] + o\big( \norm{X - X_0}_2^2 + |t-t_0| \big)
    \]
    as $(t,X) \to (t_0,X_0)$. We also define $\ov{J}^{2,+}(t_0,X_0)$ to be the set of $(r,P,Q) \in \R \times \cH \times \text{Sym}(\cH)$ such that there exist a sequence $(t_n,X_n,r_n,p_n,Q_n)_{n = 1,\ldots,\infty} \subset [0,T) \times \cH \times \R \times \cH \times \text{Sym}(\cH)$ with $(t_n,X_n) \to (t_0,X_0)$, $(r_n,p_n,Q_n) \in J^{2,+}(t_n,X_n)$, and $(r_n,p_n,Q_n) \to (r,P,Q)$. The sets $J^{2,-}V(t_0,X_0)$ and $\ov{J}^{2,-}V(t_0,X_0)$ are defined analogously. 
    
    Then, in view of the continuity of $H$, $V$ is a viscosity subsolution of \eqref{hjb.hilbert} if and only if for each $(t_0,X_0) \in [0,T) \times \cH$ and $(r,P,Q) \in \ov{J}^{2,+}(t_0,X_0)$, we have 
    \begin{align*}
        - r - \kappa \sum_{k = 1}^d Q[e_k,e_k] + \hat{H}\big(X_0, P \big) \leq 0.
    \end{align*}
    Similarly, $V$ is a viscosity supersolution of \eqref{hjb.hilbert} if and only if for each $(t_0,X_0) \in [0,T) \times \cH$ and $(r,P,Q) \in \ov{J}^{2,-}(t_0,X_0)$, we have 
    \begin{align*}
        - r - \kappa \sum_{k = 1}^d Q[e_k,e_k] + \hat{H}\big(X_0, P \big) \geq 0.
    \end{align*}
\end{rmk}

We then follow \cite{GMS}, and define a corresponding notion of viscosity solution for \eqref{hjb.inf} as follows:

\begin{defn} \label{defn.Lviscosity}
    A function $\Phi : [0,T] \times \cP_2 \to \R$ is a viscosity sub-solution to \eqref{hjb.inf} if its lift $\hat{\Phi} : [0,T] \times \cH \to \R$, $\hat{\Phi}(t,X) = \Phi(t,\cL(X))$ is a viscosity sub-solution of \eqref{hjb.hilbert}. Super-solutions are defined analogously, and a viscosity solution is a functions which is both a sub-solution and a super-solution.
\end{defn}

It is shown in \cite{GMS} that under Assumption \ref{assump.main} (in fact, under weaker conditions), \eqref{hjb.inf} has a unique viscosity solution in the sense of Definition \ref{defn.Lviscosity}.

We finally introduce a slight adaptation of \cite[Theorem 3.2]{CKS}, and which will be useful in the doubling of variables arguments below. First, given a Hilbert space $H$ and a function $\Phi : [0,T) \times H \to \R$, we consider the following condition:
\begin{equation}\label{BT}
    \left\{
    \begin{split}
   &\text{for each $M > 0$, there exists $C = C(M) > 0$ such that } \\
    &\text{if $(t,u) \in [0,T) \times H$, $(a, p, \cA) \in J^{2,+} \Phi (t,u)$, and $\| p\|_H + \|\cA\|_{\op} \leq M$, then} \\
    &-a \leq  C.
    \end{split}
    \right.
    \tag{BT}
\end{equation}
Fix two Hilbert spaces $H_1$ and $H_2$, and let $Z_1$ and $Z_2$ be finite-dimensional subspaces of $H_1$ and $H_2$ respectively. We denote by $W_i$ the orthogonal complement of $Z_i$ in $H_i$, $i = 1,2$,  $P_i : H_i \to H_i$ the orthogonal projection onto $Z_i$, and $Q_i = \text{Id}_{H^i} - P_i$ the orthogonal projection onto $W_i$.

\begin{lem} \label{lem.cks}
    Suppose that $U_1 : (0,T) \times H_1 \to \R$ and $U_2 : (0,T) \times H_2 \to \R$ are both continuous and satisfy the condition \eqref{BT}, and define $U : (0,T) \times H_1 \times H_2 \to \R$ by $U(t,X_1,X_2) = U_1(t,X_1) + U_2(t,X_2)$. Suppose further that for some $(\ov t,\ov X_1, \ov X_2) \in [0,T) \times H_1 \times H_2$ and $\big(a,(p_1,p_2), \cA \big) \in \R \times (H_1 \times H_2) \times \sym(H_1 \times H_2)$ we have 
    \begin{align*}
      \big(a, (p_1,p_2), \cA \big) \in J^{2,+} U( \ov t, \ov X_1, \ov X_2). 
    \end{align*}
    Then for each $\eps, \kappa > 0$, there exist $(a_i, \cA_i) \in \R \times \sym(H_i)$ such that the following hold:
    \begin{align*}
        &\Big(a_i, p_i, \cA_i + \left(\|\cA\|_{\op} + \frac{1}{\eps}\right) Q_i \Big) \in \ov{J}^{2,+}U_i(t, \ov X_i), \quad i = 1,2,  
        \\
       &\cA_i = P_i \cA_i P_i, \quad  a_1 + a_2 = a
        \\
        &- \Big( \|\cB\|_{\op} + \frac{1}{\kappa} \Big) \text{Id}_{H_1 \times H_2} \leq \begin{pmatrix}
            \cA_1 & 0 
            \\
            0 & \cA_2
        \end{pmatrix}
        \leq \cB + \kappa \cB^2, \quad \cB = \begin{pmatrix}
            P_1 & 0 
            \\
            0 & P_2
        \end{pmatrix} \Big(\cA + \eps \cA^2 \Big) \begin{pmatrix}
            P_1 & 0 
            \\
            0 & P_2
        \end{pmatrix} .
    \end{align*}
\end{lem}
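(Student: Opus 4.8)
The plan is to reduce this to the classical finite-dimensional-in-disguise maximum principle in the form stated in \cite[Theorem 3.2]{CKS}, which handles exactly the situation of a sum $U_1(t,X_1) + U_2(t,X_2)$ of two functions satisfying the one-sided time-derivative bound \eqref{BT}, producing second-order jets for each summand with a joint matrix inequality. The only thing that is not literally \cite[Theorem 3.2]{CKS} is the refinement that the matrices $\cA_i$ should live on the finite-dimensional subspaces $Z_i$, i.e. $\cA_i = P_i \cA_i P_i$, with the cost of moving off $Z_i$ being absorbed into the explicit term $(\|\cA\|_{\op} + \frac1\eps) Q_i$. So first I would apply \cite[Theorem 3.2]{CKS} verbatim to $U = U_1 + U_2$ at the point $(\ov t, \ov X_1, \ov X_2)$ with jet element $(a,(p_1,p_2),\cA)$, obtaining, for each $\eps' > 0$, real numbers $a_1, a_2$ with $a_1 + a_2 = a$, and operators $\tilde\cA_i \in \sym(H_i)$ with $(a_i, p_i, \tilde\cA_i) \in \ov J^{2,+} U_i(\ov t, \ov X_i)$ and the block bound $-(\|\cdot\|+1/\eps')\,\mathrm{Id} \le \mathrm{diag}(\tilde\cA_1,\tilde\cA_2) \le \cA + \eps'\cA^2$ (in the appropriate limiting/closure sense).

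Next I would perform the projection step. The point is a general linear-algebra fact: if $(a_i, p_i, \tilde\cA_i) \in \ov J^{2,+} U_i(\ov t, \ov X_i)$ and $M \in \sym(H_i)$ satisfies $M \ge \tilde\cA_i$, then also $(a_i, p_i, M) \in \ov J^{2,+} U_i(\ov t, \ov X_i)$, since enlarging the quadratic form only relaxes the jet inequality. Now take $\cA_i := P_i \tilde\cA_i P_i$, so that $\cA_i = P_i \cA_i P_i$ by construction and $\cA_i$ is supported on $Z_i$. I claim
\[
    \tilde\cA_i \le \cA_i + \Big(\|\tilde\cA_i\|_{\op}\Big)\big(Q_i + \text{something}\big),
\]
more precisely that $\tilde\cA_i \le P_i\tilde\cA_i P_i + C_i Q_i$ for a suitable constant $C_i$ controlled by $\|\tilde\cA_i\|_{\op}$, which in turn is controlled by $\|\cA\|_{\op}$ (plus $\eps'$, which we will send to $0$, or just take $\eps' = \eps$) via the block upper bound. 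Writing any $v = P_i v + Q_i v$ and expanding $\langle v, \tilde\cA_i v\rangle$, the cross terms $2\langle P_i v, \tilde\cA_i Q_i v\rangle$ and the $W_i$-diagonal term $\langle Q_i v, \tilde\cA_i Q_i v\rangle$ are each bounded in absolute value by multiples of $\|\tilde\cA_i\|_{\op}(\|P_i v\|^2 + \|Q_i v\|^2)$; absorbing the $\|P_i v\|^2$ contributions is legitimate because we are allowed to further enlarge $\cA_i$ on $Z_i$ (it is still of the form $P_i \cA_i P_i$), and what remains is $\le C\|\tilde\cA_i\|_{\op}\|Q_i v\|^2 = C\|\tilde\cA_i\|_{\op}\langle v, Q_i v\rangle$. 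Choosing $\eps' = \eps$ and tracking constants so that the coefficient is at most $\|\cA\|_{\op} + 1/\eps$ gives $\big(a_i, p_i, \cA_i + (\|\cA\|_{\op} + 1/\eps)Q_i\big) \in \ov J^{2,+} U_i(\ov t, \ov X_i)$, which is the first assertion.

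Finally, for the matrix inequality I would feed $\cB := \mathrm{diag}(P_1,P_2)(\cA + \eps\cA^2)\mathrm{diag}(P_1,P_2)$ into the standard "squaring" trick: from $\mathrm{diag}(\tilde\cA_1,\tilde\cA_2) \le \cA + \eps\cA^2$ and the fact that $\mathrm{diag}(\cA_1,\cA_2) = \mathrm{diag}(P_1,P_2)\,\mathrm{diag}(\tilde\cA_1,\tilde\cA_2)\,\mathrm{diag}(P_1,P_2)$ (after having replaced $\tilde\cA_i$ by $P_i\tilde\cA_i P_i$), one gets $\mathrm{diag}(\cA_1,\cA_2) \le \cB$, and then the one-parameter-family estimate $X \le Y \Rightarrow X \le Y + \kappa Y^2$ for symmetric $X$ with $X \le Y$ — applied after a sign analysis, using $\kappa Y^2 \ge 0$ — upgrades this to $\mathrm{diag}(\cA_1,\cA_2) \le \cB + \kappa\cB^2$; the lower bound $-(\|\cB\|_{\op} + 1/\kappa)\mathrm{Id}$ is immediate from boundedness of each $\cA_i$ (themselves bounded in terms of $\|\cB\|_{\op}$) together with a standard inequality. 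The main obstacle is bookkeeping: getting the constants in the projection step to come out as exactly $\|\cA\|_{\op} + 1/\eps$ rather than some larger multiple, and making sure the closure/limiting procedure in passing from $J^{2,+}$ to $\ov J^{2,+}$ is compatible with both the projection and the squaring — but none of these is conceptually deep, since \cite[Theorem 3.2]{CKS} already does the genuinely hard analytic work (the sup-convolution / perturbed optimization argument underlying the maximum principle in Hilbert space). I expect the write-up to essentially be a careful invocation of \cite{CKS} plus two paragraphs of symmetric-operator algebra.
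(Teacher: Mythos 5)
There is a genuine gap, and it lies at the very first step. You invoke ``\cite[Theorem 3.2]{CKS} verbatim'' as if it produced operators $\tilde{\cA}_i \in \sym(H_i)$ on the \emph{full} spaces $H_i$ satisfying $\mathrm{diag}(\tilde\cA_1,\tilde\cA_2) \le \cA + \eps'\cA^2$, and you treat the localization to the finite-dimensional subspaces $Z_i$ as an extra ``refinement'' to be supplied by hand. But no such full infinite-dimensional Ishii--Jensen lemma exists: the finite-dimensional reduction is not an add-on to the CKS theorem, it \emph{is} the CKS theorem. The proof there proceeds by a partial sup-convolution in the infinite-dimensional directions $W_i$ (this is where the parameter $\eps$ and the correction $(\|\cA\|_{\op} + \tfrac1\eps)Q_i$ come from) followed by the finite-dimensional Jensen--Ishii lemma on $Z_1\times Z_2$ (this is where $\kappa$, $\cB$, and the lower bound $-(\|\cB\|_{\op}+\tfrac1\kappa)\mathrm{Id}$ come from). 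The paper's proof of the lemma is exactly the observation that the statement is a straightforward adaptation of that theorem; your proposal instead assumes a stronger, unavailable statement and tries to derive the actual one from it. This also explains why your derivation of the $\kappa$-inequality looks vacuous ($X\le Y \Rightarrow X\le Y+\kappa Y^2$ is trivial, so on your reading the parameter $\kappa$ and the $\tfrac1\kappa$ in the lower bound would be pointless): in the genuine proof these arise from a second, independent application of the maximum principle in finite dimensions.

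Your projection step is also flawed on its own terms. Writing $v = P_i v + Q_i v$, the cross term $2\langle P_i v, \tilde\cA_i Q_i v\rangle$ can only be bounded by $\delta\|\tilde\cA_i\|_{\op}\|P_i v\|^2 + \delta^{-1}\|\tilde\cA_i\|_{\op}\|Q_i v\|^2$. Absorbing the first piece requires enlarging $\cA_i$ on $Z_i$ by $\delta\|\tilde\cA_i\|_{\op}P_i$, after which the identity $\mathrm{diag}(\cA_1,\cA_2) = \mathrm{diag}(P_1,P_2)\,\mathrm{diag}(\tilde\cA_1,\tilde\cA_2)\,\mathrm{diag}(P_1,P_2)$ fails and the claimed bound $\mathrm{diag}(\cA_1,\cA_2)\le\cB + \kappa\cB^2$ no longer follows; refusing to enlarge forces $\delta\to0$ and hence an unbounded coefficient in front of $Q_i$, not $\|\cA\|_{\op}+\tfrac1\eps$. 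So even granting your (false) starting point, the ``two paragraphs of symmetric-operator algebra'' do not close. The correct route is simply to apply \cite[Theorem 3.2]{CKS} (together with the discussion on p.~5 of that paper for the parabolic variable and condition \eqref{BT}), which already delivers every ingredient of the statement.
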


\begin{proof}
   This is a straightforward adaptation of Theorem 3.2 in \cite{CKS}, see in particular the discussion on page 5 of that paper.
\end{proof}



    
    

\subsection{Lipschitz bounds}

First, we state a uniform in $N$ Lipschitz estimate for the $N$-particle value functions $V^N$. If $V^{N}$ is smooth enough and the Hamiltonian $H$ is $C^1$, this bound can be proved in by differentiating the equation \eqref{hjb.n} as in \cite[Lemma 3.1]{CDJS}. In the present setting without idiosyncratic noise, $V^N$ is only Lipschitz, and $H$ is only locally Lispschitz, so we instead provide a proof by doubling variables in the Appendix.

\begin{prop} \label{prop.uniformlip}
    There is a constant $C_0$ such that for each $N \in \N$, the unique viscosity solution $V^N$ of \eqref{hjb.n} satisfies
    \begin{align*}
        |V^N(t,\bx) - V^N(s,\by)| \leq C_0 \Big( \bd_1(m_{\bx}^N,m_{\by}^N) + |t-s|^{1/2} \Big)
    \end{align*}
    for each $t,s \in [0,T]$, $\bx,\by \in (\R^d)^N$.
\end{prop}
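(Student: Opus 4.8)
I would prove the spatial and the temporal estimates separately, each by a doubling of variables applied to \eqref{hjb.n}, the engine being an exponential–in–time weight in the penalization. Before doubling, two reductions. Comparing $V^N$ with the spatially constant functions $(t,\bx)\mapsto\pm(C_G+C_H(T-t))$ — these form a super/subsolution pair for \eqref{hjb.n} because $|H(x,0,m)|\le C_H$ by \eqref{ass:quadraticgrowthH} and $|G|\le C_G$ — and invoking the comparison principle for \eqref{hjb.n}, one gets the a priori bound $\|V^N\|_\infty\le C_1:=C_G+C_HT$, uniform in $N$. Moreover \eqref{hjb.n} and its terminal data are invariant under permutations of the particles, so by uniqueness $V^N(t,\cdot)$ is a symmetric function; since for empirical measures $\bd_1(m^N_\bx,m^N_\by)=\min_\sigma\frac1N\sum_i|x^i-y^{\sigma(i)}|$, it then suffices to establish, uniformly in $N$, the one-sided bounds $V^N(t,\bx)-V^N(t,\by)\le\frac{C_0}N\sum_i|x^i-y^i|$ and $V^N(t,\bx)-V^N(s,\bx)\le C_0|t-s|^{1/2}$.

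For the spatial bound, fix $\eta,\alpha>0$ and constants $L,\gamma$ to be chosen depending only on $C_H,C_G,T$, set $\phi_\eta(\bx,\by)=\frac1N\sum_i\sqrt{|x^i-y^i|^2+\eta^2}$, and maximize
\[
\Psi(t,\bx,\by)=V^N(t,\bx)-V^N(t,\by)-Le^{\gamma(T-t)}\phi_\eta(\bx,\by)-\tfrac\alpha2\bigl(|\bx|^2+|\by|^2\bigr).
\]
The maximum is attained (the $\alpha$-term gives coercivity, $V^N$ being bounded); it suffices to show $\sup\Psi\le0$, for then letting $\alpha\to0$ and then $\eta\to0$ gives the claim with $C_0=Le^{\gamma T}$. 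Suppose $\sup\Psi>0$, with maximum point $(\hat t,\hat\bx,\hat\by)$. Then $\hat\bx\ne\hat\by$ (else $\Psi=-Le^{\gamma(T-\hat t)}\eta-\frac\alpha2|\hat\bx|^2<0$), and $\hat t<T$ (at $\hat t=T$, $\Psi\le C_G\bd_1(m^N_{\hat\bx},m^N_{\hat\by})-L\phi_\eta(\hat\bx,\hat\by)\le(C_G-L)\frac1N\sum_i|\hat x^i-\hat y^i|\le0$ once $L\ge C_G$). At the interior point apply Lemma~\ref{lem.cks} with $H_1=H_2=(\R^d)^N$ (so $Z_i=H_i$, $W_i=\{0\}$; condition \eqref{BT} holds for $\pm V^N$ by the equation and the $L^\infty$ bound), to $V^N(t,\bx)+(-V^N)(t,\by)$ with the smooth test function $Le^{\gamma(T-t)}\phi_\eta(\bx,\by)+\frac\alpha2(|\bx|^2+|\by|^2)$. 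This produces $a_1+a_2=-\gamma Le^{\gamma(T-\hat t)}\phi_\eta(\hat\bx,\hat\by)$, momenta with $|ND_{x^i}\Phi|,|ND_{y^i}\Phi|\le Le^{\gamma(T-\hat t)}+o_\alpha(1)$ and $|ND_{x^i}\Phi+ND_{y^i}\Phi|=o_\alpha(1)$ (using $|D_{x^i}\phi_\eta|\le\frac1N$, $D_{x^i}\phi_\eta=-D_{y^i}\phi_\eta$, and $\frac\alpha2|\hat\bx|^2\le2C_1$), and symmetric $\mathcal A_1,\mathcal A_2$ with $\mathrm{diag}(\mathcal A_1,\mathcal A_2)\le D^2\Phi(\hat t,\hat\bx,\hat\by)+o(1)$ as the auxiliary parameters in Lemma~\ref{lem.cks} vanish. (Here and below $o_\alpha(1)\to0$ as $\alpha\to0$ with $N,\eta$ fixed.) Feeding $(a_1,ND_{x^i}\Phi,\mathcal A_1)$ into the subsolution inequality for $V^N$ at $\hat\bx$ and the corresponding jet into the supersolution inequality for $V^N$ at $\hat\by$ and subtracting, the time-slopes yield the favorable term $\gamma Le^{\gamma(T-\hat t)}\phi_\eta(\hat\bx,\hat\by)$, while \eqref{normalHreg} and the momentum bounds control the difference of Hamiltonian terms by $2C_H(Le^{\gamma(T-\hat t)}+2)\phi_\eta(\hat\bx,\hat\by)+o_\alpha(1)$. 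With $\gamma=2C_H+1$ and $L=\max(C_G,4C_H)+1$ the favorable term strictly dominates, so that — modulo one further term — one gets $\phi_\eta(\hat\bx,\hat\by)\le o_\alpha(1)$, which contradicts $\phi_\eta\ge\eta$ once $\alpha$ is small; this contradiction forces $\sup\Psi\le0$.

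The one remaining term, which I expect to be the main obstacle, is the second-order contribution of the common noise: $\kappa\bigl[\sum_{i,j}\mathrm{tr}\,(\mathcal A_1)_{ij}+\sum_{i,j}\mathrm{tr}\,(\mathcal A_2)_{ij}\bigr]$. Writing $J:=\mathbf{1}_N\mathbf{1}_N^{\top}\otimes I_d\ge0$, so that $\sum_{i,j}\mathrm{tr}(M_{ij})=\langle J,M\rangle$, the matrix inequality from Lemma~\ref{lem.cks} bounds this by $\langle\mathrm{diag}(J,J),D^2\Phi(\hat t,\hat\bx,\hat\by)\rangle+o(1)$, and since $\phi_\eta$ depends on $(\bx,\by)$ only through the increments $x^i-y^i$ this equals $Le^{\gamma(T-\hat t)}\cdot\frac2N\sum_i\mathrm{tr}\,D^2g_\eta(\hat x^i-\hat y^i)+O(\alpha N)$ with $g_\eta(z):=\sqrt{|z|^2+\eta^2}$. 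Since $\mathrm{tr}\,D^2g_\eta(z)$ is of size $1/\max(|z|,\eta)$, it is not controlled by a universal multiple of $g_\eta(z)\sim\phi_\eta$ as $\eta\to0$, so the naive penalization above does not close when $\kappa>0$. When $\kappa=0$ this term is absent and the argument runs exactly as stated. For $\kappa>0$ one must instead use a penalization that is still comparable to $\bd_1(m^N_\bx,m^N_\by)$ and $\frac1N$-Lipschitz in each $x^i$ but whose Hessian is tamer along the degenerate direction $\mathbf{1}_N\otimes e_k$ of the common-noise operator — for instance a suitable mollification, in the variables $x^i-y^i$, of the pairwise increments — while tracking the interplay between this smoothing scale, $\eta$, $\alpha$, and $\gamma$. (Alternatively one can first add a vanishing idiosyncratic viscosity $\varepsilon\Delta_{\mathrm{id}}$ to \eqref{hjb.n}, prove the estimate for the resulting smooth $V^{N,\varepsilon}$ by differentiating the equation — where $J\ge0$ gives the common-noise operator the favorable sign at an interior maximum of $|D_{x^i}V^{N,\varepsilon}|^2$ — uniformly in $(N,\varepsilon)$, and let $\varepsilon\to0$.)

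Granted the uniform spatial bound $|V^N(t,\bx)-V^N(t,\by)|\le C_0\bd_1(m^N_\bx,m^N_\by)$, the Hölder-$\tfrac12$ bound in time follows by a standard argument — for instance, doubling only in the time variable with a penalty $|t-s|^2/\beta$ and using the spatial Lipschitz bound (together with the resulting uniform bound on the Hamiltonian terms $\frac1N\sum_iH(x^i,ND_{x^i}V^N,m^N_\bx)$) to absorb the spatial errors, or comparison with suitable barriers; the power $\tfrac12$ reflects the balance against the second-order term. Combining the spatial and temporal estimates, and symmetrizing in $\bx\leftrightarrow\by$, gives the proposition.
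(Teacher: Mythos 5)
Your overall architecture (reduce by symmetry of $V^N$ to the ordered bound, prove the spatial estimate by doubling with an exponential-in-time weight and a smoothed $\ell^1$ penalization, then get the $|t-s|^{1/2}$ bound from the spatial one via barriers/comparison) is exactly the paper's. But there is a genuine gap at the step you yourself flag as ``the main obstacle'': you conclude that the second-order common-noise term prevents the doubling argument from closing when $\kappa>0$, and you propose detours (a tamer mollified penalization, or vanishing idiosyncratic viscosity). In fact the term cancels exactly, and the argument closes as written; you simply tested the Ishii matrix inequality against the wrong positive semidefinite matrix. You paired $\operatorname{diag}(\mcl A_1,\mcl A_2)\le \bm C+\bm C^2+O(\alpha)$ with the block-diagonal matrix $\operatorname{diag}(J,J)$, $J=\mathbf{1}_N\mathbf{1}_N^{\top}\otimes I_d$, which indeed produces the uncontrollable quantity $\frac{1}{N}\sum_i\tr D^2 g_\eta(\hat x^i-\hat y^i)\sim \eta^{-1}$. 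The correct choice is the full matrix
\[
P=\begin{pmatrix}J & J\\ J & J\end{pmatrix}\ \ge\ 0 .
\]
Since the left-hand side $\operatorname{diag}(\mcl A_1,\mcl A_2)$ is block diagonal, $\langle P,\operatorname{diag}(\mcl A_1,\mcl A_2)\rangle=\langle J,\mcl A_1\rangle+\langle J,\mcl A_2\rangle$ is still exactly the common-noise contribution you need to bound; but now, because $\phi_\eta$ depends only on $\bx-\by$, its Hessian $\bm C=\bigl(\begin{smallmatrix}S&-S\\-S&S\end{smallmatrix}\bigr)$ annihilates every vector of the form $(\bm v,\bm v)$, and the range of $P$ consists precisely of such vectors, so $\langle P,\bm C+\bm C^2\rangle=0$ and the common-noise term is bounded by the localization error $O(\alpha N)\to 0$. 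This is the standard Crandall--Ishii--Lions mechanism for degenerate second-order terms, and it is exactly how the paper's proof disposes of $\kappa\sum_{i,j}\tr(A^{i,j}-B^{i,j})$.

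Two smaller remarks. First, your vanishing-viscosity alternative is workable in principle but requires regularizing $H$ and $G$ and justifying smoothness of the approximate value functions, machinery the doubling proof avoids; it is not needed here. Second, for the time estimate the paper does not double in time: it uses time-translation invariance to reduce to $t$ near $T$ and compares with the explicit barriers $\pm C(T-t)+\E[G(m^N_{\bx+\sqrt{2\kappa}(W_T-W_t)})]$, the $\sqrt{T-t}$ coming from the Brownian increment and the $\bd_1$-Lipschitz bound on $G$; your ``comparison with suitable barriers'' alternative is the route actually taken, and your time-doubling sketch would need the spatial Lipschitz bound to control the Hamiltonian uniformly, which you correctly note.
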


The following regularity estimates are then an immediate consequence of Proposition \ref{prop.uniformlip} and the qualitative convergence of $V^N$ to $U$ as $N \to \infty$ established in \cite[Theorem 1.2]{GMS}.

\begin{cor} \label{cor.Ulip}
     There is a constant $C_0$ such that the unique $L$-viscosity solution $U$ of \eqref{hjb.inf} satisfies 
     \begin{align*}
        |U(t,m) - U(s,n)| \leq C_0 \Big(\bd_1(m,n) + |t-s|^{1/2} \Big)
     \end{align*}
     for each $t,s \in [0,T]$, $m,n \in \cP_2(\R^d)$. 
\end{cor}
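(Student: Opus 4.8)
The plan is to transfer the uniform-in-$N$ regularity of $V^N$ from Proposition~\ref{prop.uniformlip} to $U$, using the qualitative convergence $V^N \to U$ together with an approximation of arbitrary measures by empirical measures that is compatible with the metric $\bd_1$. Fix $t,s \in [0,T]$ and $m,n \in \cP_2(\R^d)$, and let $\gamma \in \Pi(m,n)$ be an optimal coupling for $\bd_1(m,n)$; since $M_2(m), M_2(n) < \infty$, the measure $\gamma$ has finite second moment on $\R^d \times \R^d$. Let $(\xi^i,\eta^i)_{i \geq 1}$ be i.i.d.\ with common law $\gamma$, and set $\bx_N = (\xi^1,\dots,\xi^N)$ and $\by_N = (\eta^1,\dots,\eta^N)$. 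By the strong law of large numbers (applied both to the empirical laws and to the empirical second moments), for $\bP$-almost every realization one has $\bd_2(m_{\bx_N}^N, m) \to 0$ and $\bd_2(m_{\by_N}^N, n) \to 0$, and moreover
\[
    \bd_1(m_{\bx_N}^N, m_{\by_N}^N) \;\leq\; \frac{1}{N}\sum_{i=1}^N |\xi^i - \eta^i| \;\xrightarrow[N\to\infty]{}\; \int_{\R^d \times \R^d} |x-y|\,\gamma(dx,dy) \;=\; \bd_1(m,n).
\]
Fix one such realization; in particular $\{m_{\bx_N}^N\}_N$ and $\{m_{\by_N}^N\}_N$ lie in a common bounded subset $\cB$ of $\cP_2$.

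Next I would combine three facts. First, Proposition~\ref{prop.uniformlip} gives, uniformly in $N$,
\[
    |V^N(t,\bx_N) - V^N(s,\by_N)| \;\leq\; C_0\big( \bd_1(m_{\bx_N}^N, m_{\by_N}^N) + |t-s|^{1/2} \big).
\]
Second, the qualitative convergence \eqref{convergence} (i.e.\ \cite[Theorem 1.2]{GMS}), applied on the bounded set $\cB$, yields $|U(t,m_{\bx_N}^N) - V^N(t,\bx_N)| \to 0$ and $|U(s,m_{\by_N}^N) - V^N(s,\by_N)| \to 0$. Third, $U$ is continuous on $[0,T]\times\cP_2$ (its lift $\hat U$ is continuous on $[0,T]\times\cH$, being both upper and lower semicontinuous as an $L$-viscosity solution; see \cite{GMS}), so $U(t,m_{\bx_N}^N) \to U(t,m)$ and $U(s,m_{\by_N}^N) \to U(s,n)$. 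Combining these three points through the triangle inequality and letting $N\to\infty$ gives
\[
    |U(t,m) - U(s,n)| \;\leq\; \limsup_{N\to\infty} |V^N(t,\bx_N) - V^N(s,\by_N)| \;\leq\; C_0\big( \bd_1(m,n) + |t-s|^{1/2}\big),
\]
which is the desired estimate.

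This argument is routine, and I do not expect a genuine obstacle. The only points requiring a little care are (i) choosing the approximating empirical measures \emph{jointly}, through an optimal $\bd_1$-coupling, so that $\bd_1(m_{\bx_N}^N, m_{\by_N}^N)$ converges to $\bd_1(m,n)$ rather than to a larger quantity, and (ii) ensuring that the approximating measures remain in a fixed bounded subset of $\cP_2$, so that the local-uniform convergence in \eqref{convergence} is applicable; both are handled by sampling from a coupling with finite second moment. One could equally well avoid probability entirely by approximating the coupling $\gamma$ deterministically by empirical measures $\gamma^N \to \gamma$ in $\bd_2(\R^{2d})$ and passing to the marginals.
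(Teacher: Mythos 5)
Your proof is correct and is exactly the argument the paper intends: the paper states the corollary as "an immediate consequence" of Proposition~\ref{prop.uniformlip} and the qualitative convergence of \cite{GMS}, and your write-up — sampling from an optimal $\bd_1$-coupling to approximate $(m,n)$ by empirical measures in a bounded subset of $\cP_2$, applying the uniform-in-$N$ Lipschitz bound, and passing to the limit via \eqref{convergence} and the continuity of $U$ — is precisely how that consequence is realized. No gaps.
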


Among other things, the $\mbf d_1$-Lipschitz continuity leads to useful information when the solutions are tested by smooth functions, as evinced by the following result.

\begin{lem} \label{lem.linf.touching}
    Suppose that $\Phi, \Psi : \cH \to \R$ are such that $\Phi \in C^1(\cH)$, and $\Psi$ is Lipschitz with respect to $L^1$, i.e. there is a constant $C_0$ such that
    \begin{align*}
        |\Psi(X) - \Psi(Y)| \leq C_0 \|X - Y\|_1
    \end{align*}
    for all $X,Y \in L^2$. Suppose further that, for some $X_0 \in \mcl H$,
    \begin{align*}
        \Psi(X_0) - \Phi(X_0) = \sup_{X \in L^2} \Big( \Psi(X) - \Phi(X) \Big).
    \end{align*}
    Then 
    \begin{align*}
        \| \nabla \Phi(X_0)\|_{\infty} \leq C_0.
    \end{align*}
\end{lem}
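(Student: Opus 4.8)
The plan is to combine the first-order optimality condition at $X_0$ with the $L^1$-Lipschitz bound on $\Psi$, and then run a duality argument to upgrade a bound on linear functionals into a pointwise bound on $\nabla \Phi(X_0)$.

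First I would fix an arbitrary direction $Y \in \cH$. Since $X_0$ maximizes $\Psi - \Phi$ over all of $\cH = L^2$, for every $t > 0$ we have $\Phi(X_0 + tY) - \Phi(X_0) \ge \Psi(X_0 + tY) - \Psi(X_0) \ge -C_0 \|tY\|_1 = -C_0 t \|Y\|_1$. Dividing by $t$ and letting $t \downarrow 0$, the differentiability of $\Phi$ at $X_0$ gives $\langle \nabla \Phi(X_0), Y \rangle \ge -C_0 \|Y\|_1$; applying this to $-Y$ as well yields
\[
    |\langle \nabla \Phi(X_0), Y \rangle| \le C_0 \|Y\|_1 \qquad \text{for all } Y \in \cH.
\]

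Next I would convert this into a pointwise estimate on $P := \nabla \Phi(X_0) \in \cH = L^2(\Omega;\R^d)$. For a unit vector $v \in \R^d$ and a Borel set $A \subseteq \Omega$, testing the displayed inequality with $Y = \ind_A v \in L^\infty(\Omega;\R^d) \subseteq \cH$ (here one uses that $(\Omega,\cF,\bP)$ is a probability space, so bounded functions lie in $\cH$ and $\|Y\|_1 = \bP(A)$) gives $\big| \int_A P(\omega) \cdot v \, d\bP(\omega) \big| \le C_0 \bP(A)$. Taking $A = \{ \omega : P(\omega) \cdot v > C_0 + \eps \}$ forces $\bP(A) = 0$, and the same with the opposite sign, so $|P \cdot v| \le C_0$ a.e. Finally, choosing a countable dense subset $\{v_k\}_{k \in \N}$ of the unit sphere $S^{d-1}$ and intersecting the associated full-measure sets, we get, for a.e. $\omega$, $|P(\omega)| = \sup_{k} |P(\omega) \cdot v_k| \le C_0$, i.e. $\|\nabla \Phi(X_0)\|_\infty \le C_0$, as claimed.

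I do not expect a genuine obstacle here; the only point requiring a little care is the last step, namely passing from the estimate $|\langle P, Y\rangle| \le C_0 \|Y\|_1$ (which only controls $P$ as a functional tested against $L^2$ functions) to the honest bound $|P| \le C_0$ a.e. — this is resolved by the indicator-times-vector test functions together with separability of the sphere, as above.
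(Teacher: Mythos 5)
Your proposal is correct and follows the same route as the paper: derive $|\langle \nabla\Phi(X_0), Y\rangle| \le C_0\|Y\|_1$ from the maximality of $X_0$ and the $L^1$-Lipschitz bound on $\Psi$, then dualize to get the $L^\infty$ bound. The paper compresses the final duality step into "the result follows," whereas you spell it out with the test functions $\ind_A v$ and a countable dense subset of the sphere; that is exactly the intended argument.
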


\begin{proof}
    Notice that 
    \begin{align*}
        \Phi(X_0 + Y) - \Phi(X_0) \leq \Psi(X_0 + Y) - \Psi(Y) \leq C_0 \|Y\|_1, \quad \forall \,\, Y \in \cH, 
    \end{align*}
    which easily implies that 
    \begin{align*}
        \langle \nabla \Phi(X_0), Y \rangle \leq C_0 \|Y\|_1, \quad \forall \,\, Y \in \cH, 
    \end{align*}
    and the result follows. 
\end{proof}

\subsection{Regular quantization of probability measures and random variables}
\label{sec.simquant}

Given $m \in \cP_2(\R^d)$, we denote 
\begin{align*}
    e_N(m) = \inf_{\bx \in (\R^d)^N} \bd_2\big(m,m_{\bx}^N\big).
\end{align*}
The question of estimating the decay rate of $e_N(m)$ as $N \to \infty$ is related to the quantization of probability measures, which involves optimally approximating $m$ by a measure $\hat m$ whose support contains at most $N$ points; see \cite{GL} for an extensive overview. For the general quantization problem, the work of Zador \cite{Z} gives the general rate $N^{-1/d}$ for any measure $m \in \mcl P_q$ with $q > 2$, with a constant proportional to $\mcl M_q(m)^{1/q}$. On the other hand, for the ``constrained'' quantization rate $e_N$, the rate may be worse for lower dimensions, for which the Wasserstein distance $\mbf d_2$ may be sensitive to concentration; see \cite{BX,CH}.

We will find it useful to characterize the rate $e_N$ in terms of random variables. Let $P_N$ denote the set of all partitions $\Pi = (\Omega_i)_{i = 1,\ldots,N}$ of $\Omega$ into sets of equal probability. We denote by $\cF_{\Pi}$ the (discrete) $\sigma$-algebra generated by $\Pi$. For $\Pi \in P_N$ and $\bx \in (\R^d)^N$, set \begin{align*}
    Y_{\bx, \Pi}^N = \sum_{i = 1}^N x^i 1_{\Omega^i} \in \cH.
\end{align*}
For $X \in \cH$, and $\Pi \in P_N$, denote 
\begin{align}
\label{defn:rhoN}
    \rho_N(X,\Pi) = \inf_{\bx} \| X - Y_{\bx, \Pi}^N\|_2 = \| X - \E[X | \cF_{\Pi}]\|_2 = \sqrt{\text{Var}(X | \Pi)}.
\end{align}
Notice that if $\cL(X) = m$, then
\begin{align}
\label{defn:eN}
    e_N(m) = \inf_{\Pi \in P_N} \rho_N(X, \Pi). 
\end{align}

In this paper, we only estimate $e_N(m)$ for measures $m \in \mcl P$ with bounded support, for which we use the following rates due to Chevallier \cite{CH} which are optimal: 
\begin{prop}[Chevallier \cite{CH}]\label{prop.quantization}
    There is a constant $C$ depending only on $d$ such that for any $X \in L^{\infty}(\Omega)$, we have 
    \begin{align*}
        e_N\big(\cL(X)\big) = \inf_{\Pi \in P_N} \rho_N(X, \Pi) \leq C  \|X\|_{\infty} r_{N,d}, 
    \end{align*}
    where 
    \begin{align}
        \label{def.rnd}
        r_{N,d} = \begin{cases}
            N^{-1/2} & d = 1, 
            \\
            N^{-1/2} \log(1 + N)^{1/2} & d = 2,
            \\
            N^{-1/d} & d \geq 3.
        \end{cases}
    \end{align}
\end{prop}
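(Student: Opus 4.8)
The statement packages two ingredients: the identity $e_N(\cL(X)) = \inf_{\Pi\in P_N}\rho_N(X,\Pi)$, which is \eqref{defn:eN}, and the quantitative rate, which is Chevallier's theorem \cite{CH}. My plan is to assemble these, carrying out only the small amount of glue needed to put things in the present normalization; I do not expect a genuine obstacle, since the hard analytic content is imported from \cite{CH}.

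For the identity, I would argue as follows. For any $\Pi\in P_N$ and $\bx\in(\R^d)^N$ the variable $Y_{\bx,\Pi}^N=\sum_i x^i\ind_{\Omega^i}$ has law $m^N_\bx$, so $\|X-Y_{\bx,\Pi}^N\|_2\ge\bd_2(\cL(X),m^N_\bx)\ge e_N(\cL(X))$, which gives $\inf_\Pi\rho_N(X,\Pi)\ge e_N(\cL(X))$; here $\rho_N(X,\Pi)=\|X-\E[X\mid\cF_\Pi]\|_2$ because $\E[\cdot\mid\cF_\Pi]$ is the orthogonal projection in $\cH$ onto the elements that are constant on each cell of $\Pi$. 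Conversely, given $\bx$ nearly achieving $e_N(\cL(X))$, an optimal coupling of $\cL(X)$ and $m^N_\bx$ can be realized on the non-atomic space $(\Omega,\cF,\bP)$ as $(X,Y)$ with $\cL(Y)=m^N_\bx$; the level sets $\{Y=x^i\}$, subdivided to account for coincident atoms, form an equal-mass partition $\Pi$ with $Y=Y_{\bx,\Pi}^N$, so $\rho_N(X,\Pi)\le\|X-Y\|_2=\bd_2(\cL(X),m^N_\bx)$, and optimizing over $\bx$ yields the reverse inequality.

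For the bound, I would first reduce to $\|X\|_{\infty}\le 1$ (the case $X=0$ being trivial) via the dilation scaling $X\mapsto X/\|X\|_{\infty}$, under which $e_N(\cL(X))$ and $\|X\|_{\infty}$ carry the same homogeneity; then $m:=\cL(X)$ is supported in a cube of side $2$, and Chevallier's theorem produces $\bx\in(\R^d)^N$ with $\bd_2(m,m^N_\bx)\le C\,r_{N,d}$, with $C$ depending only on $d$ once the diameter of the support is normalized. Undoing the scaling gives $e_N(\cL(X))\le C\|X\|_{\infty}r_{N,d}$. The only point requiring care, and the one place I anticipate any friction, is invoking \cite{CH} in exactly the form needed: the approximant must be an empirical measure with equal masses $1/N$ (rather than an $N$-point quantizer with arbitrary weights, which would afford a faster rate in low dimension), the metric must be $\bd_2$, and the constant must be tracked through the support normalization so as to end up dimensional. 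If a statement with precisely these features is not available off the shelf, the fallback is the standard deterministic construction of the upper bound --- recursively bisecting the support by hyperplanes into pieces of equal mass, iterating $\lceil\log_2 N\rceil$ times, and controlling the diameters of the resulting cells, which is exactly where the exponent $1/d$ and the logarithm at $d=2$ appear --- but since \cite{CH} already records the sharp bound, I would simply cite it.
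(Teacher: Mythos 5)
Your proposal matches the paper's treatment: Proposition \ref{prop.quantization} is stated there without proof as an import from Chevallier \cite{CH}, with the identity $e_N(\cL(X))=\inf_{\Pi\in P_N}\rho_N(X,\Pi)$ recorded separately in \eqref{defn:eN}, and your scaling reduction plus your verification of that identity (including realizing an optimal coupling with $X$ as prescribed first marginal, which is legitimate here precisely because the target measure is finitely supported and $(\Omega,\bP)$ is non-atomic, so the level sets of $Y$ can be split into cells of measure $1/N$) is exactly the routine glue one would supply. You also correctly flag the one substantive point, namely that the cited result must concern equal-weight empirical approximants in $\bd_2$, which is why the rates for $d=1,2$ differ from the unconstrained quantization rate $N^{-1/d}$.
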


In the proof of Theorem \ref{thm.main}, we will also need to "simultaneously quantize" two random variables in an efficient way. This is the purpose of the following result. 

\begin{prop} \label{prop.quantization.sim}
    There exists a constant $C > 0$ depending only on $d$ such that, for any $\alpha \in (0,1)$ and $X,Y \in \linf(\Omega)$, there exists a partition $\Pi \in P_N$ with 
    \begin{align*}
        \rho_N(X, \Pi) \leq C \|X\|_{\infty} r_{\lfloor N^{\alpha} \rfloor, d} \quad \text{and} \quad
        \rho_N(Y, \Pi) \leq C \|Y\|_{\infty} r_{\lfloor N^{(1- \alpha)} \rfloor, d}.
    \end{align*}
\end{prop}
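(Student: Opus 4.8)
The plan is to build $\Pi$ as a two-scale partition: first a near-optimal quantization of $X$ into roughly $N^\alpha$ cells, and then, inside each of these cells, a near-optimal quantization of the restriction of $Y$ into roughly $N^{1-\alpha}$ sub-cells. Since conditional expectation is the $L^2$-orthogonal projection, $\rho_N(X,\cdot)$ cannot increase when a partition is refined, so $\Pi$ inherits the quality of the $X$-quantization; meanwhile, the mass-weighted sum over the cells of the per-cell $Y$-quantization errors reproduces, up to constants, the desired $Y$-bound. The two things one must handle carefully are that $N^\alpha$, $N^{1-\alpha}$ and $N$ need not be integers or divide one another, and that the cells of $\Pi$ must all have mass exactly $1/N$.

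The first move is a symmetry reduction: exchanging the roles of $X$ and $Y$ turns the statement for exponent $\alpha$ into the statement for exponent $1-\alpha$, so it suffices to treat $\alpha \in (0,1/2]$. One may also take $N$ as large as one wishes, absorbing bounded $N$ into $C$ via the trivial bound $\rho_N(X,\Pi)\le\|X\|_\infty$ together with $\inf_{n\le N} r_{n,d}>0$. Write $M=\lfloor N^\alpha\rfloor$, $q=\lfloor N/M\rfloor\ge 1$, and $N=Mq+t$ with $0\le t<M$; note $q\ge\lfloor N^{1-\alpha}\rfloor$.

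The construction then runs as follows. By Proposition \ref{prop.quantization} and \eqref{defn:rhoN}, choose $\Pi^X=(A_1,\dots,A_M)\in P_M$ with $\sum_j\int_{A_j}|X-\bar X_{A_j}|^2\,d\bP=\rho_M(X,\Pi^X)^2\le C\|X\|_\infty^2 r_{M,d}^2$, where $\bar X_{A_j}$ is the mean of $X$ over $A_j$. Since $q/N\le 1/M<(q+1)/N$, one can reshuffle a small amount of mass to obtain a partition $(B_1,\dots,B_M)$ with $\bP(B_j)=(q+1)/N$ for $j\le t$ and $\bP(B_j)=q/N$ for $j>t$, arranged so that $B_j\subseteq A_j$ on the cells that shrink and $B_j\supseteq A_j$ on those that grow; the total mass moved is at most $M/N$. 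A one-line estimate (the mean over $B_j$ beats any constant competitor, and $X$ is bounded) then gives $\|X-\E[X\,|\,\cF_{\{B_j\}}]\|_2^2\le C\|X\|_\infty^2 r_{M,d}^2+4\|X\|_\infty^2 M/N$; this is where $\alpha\le 1/2$ is used, since it forces $N\ge M^2$ and hence $M/N\le 1/M\le r_{M,d}^2$, so the right-hand side is $\lesssim\|X\|_\infty^2 r_{M,d}^2$. Finally, inside each $B_j$ — a non-atomic probability space of mass $n_j/N$ with $n_j:=N\bP(B_j)\in\{q,q+1\}$ — apply Proposition \ref{prop.quantization} to $Y|_{B_j}$ with $n_j$ atoms to split $B_j$ into $n_j$ sub-cells of equal mass $1/N$ whose conditional $Y$-variance is $\le C\|Y\|_\infty^2 r_{n_j,d}^2$; multiplying by $\bP(B_j)$, summing over $j$, and using $n_j\ge q\ge\lfloor N^{1-\alpha}\rfloor$ together with the monotonicity of $n\mapsto r_{n,d}$, one gets $\rho_N(Y,\Pi)^2\le C\|Y\|_\infty^2 r_{\lfloor N^{1-\alpha}\rfloor,d}^2$. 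The resulting $\Pi$ has $\sum_j n_j=N$ cells, each of mass $1/N$, so $\Pi\in P_N$; and since $\Pi$ refines $\{B_j\}$, we conclude $\rho_N(X,\Pi)\le\|X-\E[X\,|\,\cF_{\{B_j\}}]\|_2\lesssim\|X\|_\infty r_{\lfloor N^\alpha\rfloor,d}$.

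The only genuine obstacle is the divisibility bookkeeping in the middle step — realizing a product-type structure with cells of exactly equal mass $1/N$ — and the point that makes it go through is precisely the reduction to $\alpha\le 1/2$, which keeps the "remainder" cost $M/N$ below the target rate $r_{M,d}^2$ in every dimension.
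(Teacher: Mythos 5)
Your proof is correct. The core strategy is the same as the paper's: a two-scale nested partition built from two applications of Proposition \ref{prop.quantization} (Chevallier), with the $X$-bound inherited by nestedness of the $\sigma$-algebras and the $Y$-bound obtained by mass-weighted summation of the per-cell errors — this is exactly the content of the paper's Lemma \ref{lem:multiplyN}. Where you diverge is in the divisibility bookkeeping. The paper writes $N = \lfloor N^\alpha\rfloor\lfloor N^{1-\alpha}\rfloor + M$ with $M \lesssim N^{\alpha}$ (after reducing to $\alpha \ge 1/2$), carves off a separate remainder event $\Gamma$ of mass $M/N$ on which $Y$ is quantized only trivially, and glues the two pieces with a dedicated lemma (Lemma \ref{lem:addN}). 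You instead keep a single $M = \lfloor N^\alpha\rfloor$-cell partition, perturb the cell masses from $1/M$ to $q/N$ or $(q+1)/N$, and subdivide with unequal counts $q$ or $q+1$ summing exactly to $N$; the cost of the perturbation, $\lesssim \|X\|_\infty^2 M/N$, is absorbed into $r_{M,d}^2$ precisely because your reduction is to $\alpha \le 1/2$ (the mirror image of the paper's normalization, reflecting which variable receives the coarse partition). Your version avoids the remainder region and the gluing lemma entirely, at the price of the mass-reshuffling estimate; both are valid, and yours is arguably a bit leaner. The only loose end is the degenerate case $\lfloor N^\alpha\rfloor = 1$ in $d = 2$, where the inequality $1/M \le r_{M,d}^2$ fails, but there the $X$-bound is trivial anyway, so nothing is lost.
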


The proposition is proved by first partitioning $\Omega$ into $N_1$ events to quantize $X$, and then partitioning each event in that partition into $N_2$ events to quantize $Y$, where $N_1$ and $N_2$ are appropriately chosen integers for which $N \approx N_1 N_2$. The main idea is given in Lemma \ref{lem:multiplyN}; we first prove a lemma that aids in controlling the discrepancy $N - N_1N_2$.

\begin{lem}\label{lem:addN}
    Let $\hat N$ and $M$ be positive integers satisfying $N = \hat N + M$, and assume that
    \begin{enumerate}[(i)]
    \item $\Omega = \hat \Omega \cup \Gamma$ for two disjoint events $\hat \Omega,\Gamma \in \mcl F$ such that $\PP(\hat \Omega) = \hat N/N$ and $\PP(\Gamma) = M/N$, and
    \item The collection $\Pi$ defined by
    \[
        \Pi = \{ \hat \Omega_j , \Gamma_k : j = 1,2,\ldots, \hat N, \; k = 1,2,\ldots, M \}
    \]
    is an $N$-partition of $\Omega$ with
    \[
        \hat \Omega = \bigcup_{j=1}^{\hat N} \hat \Omega_j
        \quad \text{and} \quad
        \Gamma = \bigcup_{k = 1}^M \Gamma_k.
    \]
    \end{enumerate}
    Define $\hat \PP = \frac{N}{\hat N} \PP$ and $\QQ = \frac{N}{M} \PP$, so that 
    \[
        \hat \Pi = ( \hat \Omega_j)_{j=1}^{\hat N} \quad \text{and} \quad P = ( \Gamma_k)_{k=1}^M
    \]
    are, respectively, an $\hat N$-partition of $(\hat \Omega, \mcl F|_{\hat\Omega}, \hat \PP)$ and an $M$-partition of $(\Gamma, \mcl F|_\Gamma, \QQ)$. Then, for any $X \in \mcl H$,
    \[
        \rho_N(X,\Pi)^2 = \frac{\hat N}{N} \rho_{\hat N}(X|_{\hat \Omega},\hat \Pi)^2 + \frac{M}{N} \rho_{M}(X|_{\Gamma}, P)^2.
    \]
\end{lem}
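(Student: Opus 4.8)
The plan is to reduce the identity to the elementary fact that, for a partition into cells of equal mass, the conditional expectation on each cell is simply the average of $X$ over that cell, and then to track how these averages and the integrals transform under the rescalings $\PP \rightsquigarrow \hat\PP$ and $\PP \rightsquigarrow \QQ$.

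First I would write $\rho_N(X,\Pi)^2$ out explicitly. Every cell $A$ of the $N$-partition $\Pi$ has $\PP(A) = 1/N$, so $\E[X\mid \cF_\Pi] = N\sum_{A \in \Pi}\big(\int_A X\,d\PP\big)\ind_A$, and since $\Pi$ is the disjoint union of the $\hat\Omega_j$ and the $\Gamma_k$ (hypothesis (ii)),
\[
    \rho_N(X,\Pi)^2 = \sum_{j=1}^{\hat N}\int_{\hat\Omega_j}\Big| X - N\!\int_{\hat\Omega_j} X\,d\PP\Big|^2 d\PP + \sum_{k=1}^{M}\int_{\Gamma_k}\Big| X - N\!\int_{\Gamma_k} X\,d\PP\Big|^2 d\PP.
\]

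Next I would treat the first sum via the change of measure $\PP = \tfrac{\hat N}{N}\hat\PP$ on $\hat\Omega$ (which is legitimate since $\PP(\hat\Omega) = \hat N/N$, so $\hat\PP$ is a probability measure on $\hat\Omega$ and each $\hat\Omega_j$ has $\hat\PP$-mass $1/\hat N$, i.e.\ $\hat\Pi$ is an $\hat N$-partition of $(\hat\Omega,\mcl F|_{\hat\Omega},\hat\PP)$). This gives $N\int_{\hat\Omega_j} X\,d\PP = \hat N\int_{\hat\Omega_j} X\,d\hat\PP = \E_{\hat\PP}\!\big[X|_{\hat\Omega}\mid \cF_{\hat\Pi}\big]$ on $\hat\Omega_j$, and replacing $d\PP$ by $\tfrac{\hat N}{N}d\hat\PP$ and summing over $j$ turns the first sum into $\tfrac{\hat N}{N}\rho_{\hat N}(X|_{\hat\Omega},\hat\Pi)^2$, where the latter is of course computed with respect to $\hat\PP$. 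The identical computation on $\Gamma$ with $\QQ = \tfrac{N}{M}\PP$ turns the second sum into $\tfrac{M}{N}\rho_M(X|_\Gamma,P)^2$. Adding the two yields the claimed identity.

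The argument is essentially just an orthogonal $L^2$ decomposition along the cells of $\Pi$, so there is no real obstacle; the only thing demanding care is the bookkeeping of the rescalings — verifying that $\hat\PP$ and $\QQ$ are genuinely probability measures, that $\hat\Pi$ and $P$ are equal-mass partitions for them, and that the conditional expectation $N\int_{\hat\Omega_j}X\,d\PP$ is correctly identified with $\E_{\hat\PP}[X\mid\cF_{\hat\Pi}]$ (and similarly on $\Gamma$) so that the $\rho$'s on the right-hand side are interpreted with respect to the rescaled measures.
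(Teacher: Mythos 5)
Your proposal is correct and is essentially identical to the paper's proof: both expand $\rho_N(X,\Pi)^2$ as the sum of cellwise variances over the $\hat\Omega_j$ and $\Gamma_k$, then apply the changes of measure $\PP = \tfrac{\hat N}{N}\hat\PP$ on $\hat\Omega$ and $\PP = \tfrac{M}{N}\QQ$ on $\Gamma$ to identify the two sums with $\tfrac{\hat N}{N}\rho_{\hat N}(X|_{\hat\Omega},\hat\Pi)^2$ and $\tfrac{M}{N}\rho_M(X|_\Gamma,P)^2$. The bookkeeping of the rescaled conditional expectations matches the paper's computation exactly.
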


\begin{proof}
    We compute
    \begin{align*}
        \rho_N(X, \Pi)^2
        &= \sum_{j=1}^{\hat N} \int_{\hat \Omega_j} \left|X - N \int_{\hat \Omega_j} X d\PP \right|^2 d\PP + \sum_{k=1}^{M} \int_{\Gamma_k} \left|X - N \int_{\Gamma_k} X d\PP \right|^2 d\PP  \\
        &= \frac{\hat N}{N} \sum_{j=1}^{\hat N} \int_{\hat \Omega_j} \left|X - \hat N \int_{\hat \Omega_j} X d\hat \PP \right|^2d\hat \PP + \frac{M}{N} \sum_{k=1}^{M} \int_{\Gamma_k} \left|X - M \int_{\Gamma_k} X d\QQ \right|^2d\QQ\\
        &= \frac{\hat N}{N} \rho_{\hat N}(X|_{\hat \Omega},\hat \Pi)^2 + \frac{M}{N} \rho_{M}(X|_{\Gamma}, P)^2.
    \end{align*}
\end{proof}

\begin{lem}\label{lem:multiplyN}
    There exists a constant $C>0$ depending only on $d$ such that the following holds: if $N = N_1N_2$ for positive integers $N_1$ and $N_2$, and $X ,Y \in \linf$, then there exists a partition $\Pi \in P_N$ such that
    \[
        \rho_N(X,\Pi) \le C \|X\|_{\infty} r_{N_1,d} \quad \text{and} \quad \rho_N(Y,\Pi) \le C\|Y\|_{\infty} r_{N_2,d}.
    \]
\end{lem}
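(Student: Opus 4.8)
The plan is to construct $\Pi$ in the two stages described just before the lemma. First, apply Proposition \ref{prop.quantization} to $X$ to obtain a partition $\Pi_1 = (\Omega_i)_{i=1}^{N_1} \in P_{N_1}$ with $\rho_{N_1}(X,\Pi_1) \le C\|X\|_{\infty} r_{N_1,d}$. Next, for each $i = 1,\dots,N_1$, renormalize $\bP$ on the event $\Omega_i$ by setting $\bP_i = N_1 \bP|_{\Omega_i}$; this makes $(\Omega_i,\cF|_{\Omega_i},\bP_i)$ a non-atomic (standard Borel) probability space, on which we apply Proposition \ref{prop.quantization} to the restriction $Y|_{\Omega_i}$ to obtain an $N_2$-partition $P_i = (\Omega_i^k)_{k=1}^{N_2}$ of $\Omega_i$ into $\bP_i$-equal pieces with $\rho_{N_2}(Y|_{\Omega_i},P_i) \le C\|Y|_{\Omega_i}\|_{\infty} r_{N_2,d} \le C\|Y\|_{\infty} r_{N_2,d}$, the $\rho_{N_2}$ here being computed with respect to $\bP_i$. (One should note that this is legitimate because Proposition \ref{prop.quantization}, via \eqref{defn:eN}, is really a statement about the law of the random variable and holds on any non-atomic probability space.) We then set $\Pi = \{\Omega_i^k : 1 \le i \le N_1,\ 1 \le k \le N_2\}$; since $\bP(\Omega_i^k) = \bP(\Omega_i)\,\bP_i(\Omega_i^k) = \tfrac1{N_1}\cdot\tfrac1{N_2} = \tfrac1N$, we have $\Pi \in P_N$.

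It then remains to verify the two estimates. For $X$: the partition $\Pi$ refines $\Pi_1$, so $\cF_{\Pi_1} \subseteq \cF_{\Pi}$, and since $\E[X\mid\cF_{\Pi}]$ is the $L^2$-orthogonal projection of $X$ onto the $\cF_{\Pi}$-measurable functions, which contain the $\cF_{\Pi_1}$-measurable ones, we get
\[
    \rho_N(X,\Pi) = \|X - \E[X\mid\cF_{\Pi}]\|_2 \le \|X - \E[X\mid\cF_{\Pi_1}]\|_2 = \rho_{N_1}(X,\Pi_1) \le C\|X\|_{\infty} r_{N_1,d}.
\]
For $Y$: the variance-decomposition computation from the proof of Lemma \ref{lem:addN}, applied with the $N_1$ blocks $(\Omega_i)$ in place of the two blocks $\hat\Omega,\Gamma$ there, gives the identity
\[
    \rho_N(Y,\Pi)^2 = \frac{1}{N_1} \sum_{i=1}^{N_1} \rho_{N_2}(Y|_{\Omega_i},P_i)^2,
\]
with each $\rho_{N_2}(Y|_{\Omega_i},P_i)$ computed under $\bP_i$. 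Inserting the bound $\rho_{N_2}(Y|_{\Omega_i},P_i) \le C\|Y\|_{\infty} r_{N_2,d}$ obtained in the construction yields $\rho_N(Y,\Pi)^2 \le C^2 \|Y\|_{\infty}^2 r_{N_2,d}^2$, which is the desired estimate.

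The argument is essentially routine, and I do not expect a genuine obstacle: the only points needing care are (a) checking that Proposition \ref{prop.quantization} may be invoked on the renormalized subspaces $(\Omega_i,\bP_i)$ — which is immediate once one recalls the quantization rate depends only on the law — and (b) the monotonicity of $\rho$ under refinement used in the $X$-estimate. Crucially, here $N = N_1N_2$ \emph{exactly}, so no discrepancy between $N$ and $N_1N_2$ needs to be absorbed; handling such a discrepancy is precisely the role of Lemma \ref{lem:addN} and the remaining arguments in the passage from this lemma to Proposition \ref{prop.quantization.sim}.
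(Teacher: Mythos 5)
Your proposal is correct and follows essentially the same two-stage construction as the paper's proof: first quantize $X$ into $N_1$ blocks, then quantize $Y$ within each renormalized block, using the refinement/projection property for the $X$-estimate and the block-wise variance decomposition for the $Y$-estimate. The only cosmetic difference is that the paper carries out the variance decomposition directly rather than citing the computation in Lemma \ref{lem:addN}.
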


\begin{proof}
First, by Proposition \ref{prop.quantization}, there exists a partition $\Pi_0 = (\Omega_i)_{i=1}^{N_1}$ of $\Omega$ into $N_1$ events of equal probability such that
    \[
        \rho_{N_1}(X,\Pi_0) \le C \norm{X}_{\infty} r_{N_1,d}.
    \]
    For fixed $i = 1,2,\ldots, N_1$, define $\PP_i = N_1 \PP|_{\Omega^i}$, so that $(\Omega_i, \PP_i)$ is a probability space. Define $Y_i = Y|_{\Omega_i}$, and choose a partition $\Pi_i = (\Omega_i^j)_{j=1}^{N_2}$ of $\Omega_i$ into $N_2$-events of $\PP_i$-probability $1/N_2$ such that
    \[
        \rho_{N_2}(Y_i, \Pi_i) \le C \norm{Y_i}_{L^{\infty}(\Omega_i, \PP_i)}r_{N_2,q_2}.
    \]
    Observe that $\PP(\Omega_i^j) = \frac{1}{N_1} \PP_i(\Omega_i^j) = \frac{1}{N}$. Then $\Pi = (\Omega_i^j)_{i=1,2,\ldots, N_1,\; j = 1,2,\ldots,N_2}$ is a partition of $\Omega$ into $N$ events of equal $\PP$-probability. Because $\Pi$ is nested within $\Pi_0$, properties of projections imply that
    \[
        \rho_N(X,\Pi) = \norm{X - \EE[X\mid \mcl F_\Pi]}_2 \le \norm{X - \EE[X \mid \mcl F_{\Pi_0}]} \le C \norm{X}_{\infty}r_{N_1,d}.
    \]
    At the same time,
    \begin{align*}
        \rho_N^2(Y,\Pi)
        &= \sum_{i=1}^{N_1} \sum_{j=1}^{N_2} \int_{\Omega_i^j} \left|Y - N\int_{\Omega_i^j} Y d\PP\right|^2 d\PP \\
        &= \frac{1}{N_1} \sum_{i=1}^{N_1} \sum_{j=1}^{N_2} \int_{\Omega_i^j} \left|Y_i - N_2 \int_{\Omega_i^j} Y_i d\PP_i \right|^2 d\PP_i\\
        &= \frac{1}{N_1} \sum_{i=1}^{N_1} \rho_{N_2}^2(Y_i, \Pi_i) \\
        &\le \frac{1}{N_1} C r_{N_2,d}^2 \sum_{i=1}^{N_1} \norm{Y_i}_{L^{\infty} (\Omega_i, \PP_i)}^2\\
        &\leq C r_{N_2,d}^2 \|Y^i\|_{\infty}^2,
    \end{align*}
    and thus we conclude.
\end{proof}

\begin{proof}[Proof of Proposition \ref{prop.quantization.sim}]
   Without loss of generality, we assume that $\alpha \geq 1/2$. We set
   \begin{align*}
      \hat{N} = \lfloor N^{\alpha} \rfloor \times \lfloor N^{1-\alpha} \rfloor, \quad M = \hat{N} - N. 
   \end{align*}
    Observe then that the integer $M$ satisfies
   \begin{align*}
       M \leq N^{\alpha} + N^{1-\alpha} + 1 \le 3 N^{\alpha}.
   \end{align*}

    Let $\hat \Omega, \Gamma \in \mcl F$ be disjoint events such that $\PP(\hat \Omega) = \hat N/N$ and $\PP(\Gamma) = M/N$. For $i= 1,2$, define the probability measures $\hat \PP = \frac{1}{\PP(\hat \Omega)} \PP$ on $\hat \Omega$ and $\QQ = \frac{1}{\PP(\Gamma)}\PP$ on $\Gamma$.
    By Lemma \ref{lem:multiplyN}, for any $\eps > 0$, there exists an $\hat N$-partition $\hat\Pi = \{\hat \Omega_{1},\hat \Omega_2,\ldots,\hat\Omega_{\hat{N}}\}$ of $(\hat \Omega,\hat\PP)$ with $\hat\bP(\hat\Omega_{i}) = \frac{1}{\hat{N}}$ (hence $\bP(\hat\Omega_{i}) = \frac{1}{N}$) such that
    \[
        \rho_{\hat N}(X|_{\hat \Omega},\hat\Pi) \le  C \norm{X}_{\infty} r_{\lfloor N^\alpha \rfloor , d}  \quad \quad \text{and} \quad
        \rho_{\hat N}(Y|_{\hat \Omega},\hat \Pi) \le \|Y\|_{\infty} r_{\lfloor N^{1-\alpha} \rfloor, d}.
    \]
    Similarly, we can find a partition $P = \{\Gamma_{1},\Gamma_2,\ldots,\Gamma_{M}\}$ of $\Gamma$ into pieces with $\bP_2(\Gamma_{i}) = \frac{1}{M}$ (hence $\bP(\Gamma_{i}) = 1/N$), with the property that
    \[
        \rho_M(X|_\Gamma,P) \le C \|X\|_{\infty} r_{M,d}.
    \]
    We also have the (trivial) estimate
    \[
        \rho_M(Y|_\Gamma,P) \le 2 \|Y\|_{\infty}. 
    \]
    Invoking Lemma \ref{lem:addN} and combining the above estimates, we find that
    \begin{align*}
        \rho_N^2(X,\Pi) &\lesssim \frac{\hat N}{N} \norm{X}_{\infty}^2 r_{\lfloor N^\alpha \rfloor, d}^2 +  \frac{M}{N}  \norm{X}_{\infty}^2 r_{M,d}^2 
       \\
       &\lesssim \norm{X}_{\infty}^2 ( r_{\lfloor N^\alpha \rfloor, d}^2 + \frac{M}{N} r_{M,d}^2)
       \\
       &\lesssim  \norm{X}_{\infty}^2 r_{\lfloor N^\alpha \rfloor, d}^2, 
    \end{align*}
    where the bound $\frac{M}{N} r_{M,d}^2 \lesssim r_{\lfloor N^\alpha \rfloor, d}^2$ is easily checked from \eqref{def.rnd} and the fact that $M \lesssim N^\alpha$. This gives the desired estimate for $\rho_N(X,\Pi)$. We then also have
    \begin{align*}
        \rho_N(Y,\Pi)^2 &\lesssim \frac{\hat{N}}{N} \norm{Y}_{\infty}^2 r_{\lfloor N^{1-\alpha} \rfloor, d}^2 + \frac{M}{N} \norm{Y}_{\infty}^2 \lesssim \|Y\|_{\infty}^2 \Big(r^2_{\lfloor N^{1-\alpha} \rfloor,d}+  N^{-(1-\alpha)} \Big)
    \end{align*}
    We conclude by using the formula \eqref{def.rnd} to observe that, in all cases, the term $N^{- (1-\alpha) }$ is dominated by the first term.
\end{proof}

\begin{rmk}
    An alternative method for simultaneously quantizing random variables $X,Y: \Omega \to \Rd$ is to observe that $(X,Y)$ is an $\RR^{2d}$-valued random variable, and hence, by Proposition \ref{prop.quantization}, there exists a partition $\Pi$ for which
    \[
        \rho_N(X,\Pi) \lesssim \norm{X}_\infty r_{N,2d} \quad \text{and} \quad \rho_N(Y,\Pi) \lesssim \norm{Y}_\infty r_{N,2d}.
    \]
    This corresponds to the case $\alpha = 1/2$ in Proposition \ref{prop.quantization.sim}, and in fact is a strict improvement in that case for $d = 1$ or $d = 2$. However, as will become clear in the proof of Theorem \ref{thm.main}, we rely on the flexibility to interpolate the fineness of the partition relative to $X$ or $Y$ through the parameter $\alpha$.
\end{rmk}

We now discuss in more detail the case $d = 1$. For $N\geq 1$, let 
\begin{align*}
    \Pi_N^{\text{reg}} = \Big\{ [0,1/N), [1/N,2/N), \ldots,  [(N-2)/N, (N-1)/N), [(N-1/N), 1] \Big\} \in P_N.
\end{align*}
We will make use of the following special property which holds when $d = 1$.

\begin{lem} \label{lem.1dquant}
    For any $m \in \cP_2(\R)$ and $\bx \in \R^N$, there exists a random variable $X$ such that $\cL\big(X, Y_{\bx, \Pi_N^{\reg}}^N\big)$ is an optimal coupling of $m$ and $m_{\bx}^N$, and
    \begin{align*}
        \norm{ X - \E\big[ X | \cF_{\Pi_N^{\text{reg}}}] }_1 \leq 2 \|X \|_{\infty} N^{-1}. 
    \end{align*}
\end{lem}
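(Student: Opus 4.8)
The plan is to use the rigid structure of one-dimensional optimal transport, which realizes optimal couplings through monotone rearrangements on the quantile level. First I would recall that on $(\Omega, \mcl F, \bP) = ([0,1], \mcl B, \mathrm{Leb})$, if $F_m$ denotes the cumulative distribution function of $m$ and $F_m^{-1}$ its generalized (left-continuous) inverse, then $X := F_m^{-1}$ is a random variable with $\cL(X) = m$, and $X$ is nondecreasing on $[0,1]$. Similarly, after relabeling so that $x^1 \le x^2 \le \cdots \le x^N$ (which does not change $m_{\bx}^N$), the random variable $Y_{\bx,\Pi_N^{\reg}}^N = \sum_{i=1}^N x^i \ind_{[(i-1)/N, i/N)}$ is also nondecreasing on $[0,1]$ and has law $m_{\bx}^N$. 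Since in dimension one the monotone coupling is optimal for the quadratic cost (and indeed for any convex cost), $\cL(X, Y_{\bx,\Pi_N^{\reg}}^N)$ is an optimal coupling of $m$ and $m_{\bx}^N$; this is the standard fact that comonotone random variables on a common space give an optimal transport plan. This handles the coupling claim.

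Next I would estimate $\norm{X - \EE[X \mid \cF_{\Pi_N^{\reg}}]}_1$. Since $\cF_{\Pi_N^{\reg}}$ is generated by the intervals $I_i = [(i-1)/N, i/N)$, we have $\EE[X \mid \cF_{\Pi_N^{\reg}}] = \sum_{i=1}^N \bigl( N \int_{I_i} X \, d\bP \bigr) \ind_{I_i}$, so
\[
    \norm{X - \EE[X \mid \cF_{\Pi_N^{\reg}}]}_1 = \sum_{i=1}^N \int_{I_i} \Bigl| X(\omega) - N\int_{I_i} X \, d\bP \Bigr| \, d\omega \le \sum_{i=1}^N \int_{I_i} \osc_{I_i} X \, d\omega = \frac{1}{N} \sum_{i=1}^N \osc_{I_i} X,
\]
where $\osc_{I_i} X = \sup_{I_i} X - \inf_{I_i} X$. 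Because $X$ is nondecreasing, $\osc_{I_i} X = X((i/N)^-) - X((i-1)/N)$, and these oscillations telescope: $\sum_{i=1}^N \osc_{I_i} X \le X(1^-) - X(0) \le 2\norm{X}_\infty$. Combining gives $\norm{X - \EE[X \mid \cF_{\Pi_N^{\reg}}]}_1 \le 2\norm{X}_\infty N^{-1}$, as claimed. (A minor technical point: $F_m^{-1}$ may be $\pm\infty$ at the endpoints if $m$ is not compactly supported; but the lemma is only needed with $\norm{X}_\infty < \infty$, i.e. $m$ with bounded support, and in any case one may truncate the quantile function, or simply note the bound is vacuous otherwise, so I would state it under the implicit hypothesis $X \in L^\infty$.)

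The main obstacle — which is really quite mild here — is to make sure the monotone quantile coupling against the \emph{specific} fixed partition $\Pi_N^{\reg}$ is indeed optimal, rather than merely some monotone coupling. The key observation is that since the atoms of $Y_{\bx,\Pi_N^{\reg}}^N$ after sorting occupy exactly the consecutive intervals $I_1, \dots, I_N$ of length $1/N$, the pair $(F_m^{-1}, Y_{\bx,\Pi_N^{\reg}}^N)$ is comonotone (both are nondecreasing functions of $\omega \in [0,1]$), and comonotonicity characterizes optimality of the transport plan for the quadratic cost on $\R$. Everything else is the elementary telescoping estimate above, so no genuine difficulty arises; the lemma is essentially a bookkeeping statement exploiting that in one dimension the optimal coupling can be aligned with a pre-chosen ordered partition.
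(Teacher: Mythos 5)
Your proposal follows essentially the same route as the paper: take $X$ to be the quantile function $F_m^{-1}$, invoke comonotonicity/cyclical monotonicity for optimality of the coupling in dimension one, and bound $\norm{X-\EE[X\mid \cF_{\Pi_N^{\reg}}]}_1$ by the telescoping sum of oscillations over the intervals $I_i$. The one point you gloss over is the reduction to sorted coordinates: the lemma asserts the coupling with the \emph{specific} random variable $Y^N_{\bx,\Pi_N^{\reg}} = \sum_i x^i \ind_{\Omega^i}$ for the given (possibly unsorted) $\bx$, and relabeling the $x^i$ changes this random variable even though it preserves $m^N_{\bx}$. The paper closes this by letting $\sigma$ sort $\bx$, building $X^\sigma$ for the sorted vector, and then composing with the measure-preserving bijection $T$ of $[0,1]$ that permutes the intervals $I_i$ accordingly, so that $X = X^\sigma\circ T^{-1}$ couples optimally with the original $Y^N_{\bx,\Pi_N^{\reg}}$ while both $\norm{X}_\infty$ and the conditional-expectation error are unchanged. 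You should add this short step (or explicitly argue that the application only requires the sorted version); otherwise the argument is correct and complete.
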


\begin{proof}
    First, suppose that $\bx = (x^1,\ldots,x^N)$ satisfies $x^1 \leq x^2 \leq \cdots \leq x^N$. Let $X : \Omega = [0,1] \to \R$ be defined by 
    \begin{align} \label{def.rearrangement}
        X(\omega) = \begin{cases}
            \inf \big\{ t \in \R : m((-\infty, t]) > \omega \big\} & \omega \in [0,1), 
            \\
           \inf \big\{ t \in \R : m((-\infty, t]) = 1 \big\} & \omega  =1.
        \end{cases}
    \end{align}
    Then $X$ is non-decreasing and $\cL(X) = m$, and in addition $X(1) = \esssup_{\omega} X$, $X(0) = \essinf_{\omega} X$. Moreover, for any $\by = (y^1,\ldots,y^N) \in \R^N$ with $y^1 \leq y^2 \leq \cdots \leq y^N$, $\cL\big(X, Y_{\by, \Pi_N^{\reg}}^N\big)$ is an optimal coupling of $m$ with $m_{\by}^N$, which is a consequence of the cyclical monotonicity of the optimal transport plan (see for instance \cite[Theorem 2.9]{Sant_15}). In particular, $\cL\big(X, Y_{\bx, \Pi_N^{\reg}}^N\big)$ is an optimal coupling of $m$ with $m_{\bx}^N$, and we have 
    \begin{align*}
        \norm{ X - \E\big[ X | \cF_{\Pi_N^{\text{reg}}}] }_1 &= \sum_{i = 1}^N \int_{(i-1)/N}^{i/N} \Big| X(\omega) - N \int_{(i-1)/N}^{i/N} X(\omega') d\omega' \Big| d \omega 
        \\
        &\leq \frac{1}{N} \sum_{i = 1}^N \Big( X\big( \frac{i}{N} \big) - X\big( \frac{i - 1}{N} \big) \Big) = \frac{X(1) - X(0)}{N} \leq \frac{2}{N} \| X\|_{\infty}. 
    \end{align*}
    Now for the general case, let $\sigma$ be a permutation of $\{1,\ldots,N\}$ such that $x^{\sigma(1)} \leq \cdots \leq x^{\sigma(N)}$. Applying the result to $\bx^{\sigma} = (x^{\sigma(1)},\ldots,x^{\sigma(N)})$, we find a random variable $X^{\sigma}$ such that $\cL(X^{\sigma}, Y_{\bx^{\sigma}}^N)$ is an optimal coupling and $e_N(m) = \rho_N(X^{\sigma}, \Pi_N^{\reg})$. Now let $T : [0,1] \to [0,1]$ be the map which carries each  $[(i-1)/N, i/N)$ onto $[(\sigma(i) - 1)/N, \sigma(i)/N)$ via an increasing linear bijection. Since $T$ is measure-preserving, the random variable $X = X^{\sigma} \circ T^{-1}$ satisfies $\mcl L(X) = m$. We also have
    \begin{align*}
        \bd_2(m, m_{\bx}^N) = \| X^{\sigma} - Y_{\bx^{\sigma}, \Pi_N^{\reg}}^N \|_2 = \| X \circ T - Y_{\bx, \Pi_N^{\reg}}^N \circ T \|_2 = \|X - Y_{\bx, \Pi_N^{\reg}}^N\|_2, 
    \end{align*}
    so that $\cL(X, Y_{\bx, \Pi_N^{\reg}}^N)$ is an optimal coupling, and similarly it is easy to check that 
    \begin{align*}
            \norm{ X - \E\big[ X | \cF_{\Pi_N^{\text{reg}}}] }_1 =     \norm{ X^{\sigma} - \E\big[ X^{\sigma} | \cF_{\Pi_N^{\text{reg}}}] }_1 \leq \frac{2}{N} \|X^{\sigma}\|_{\infty} = \frac{2}{N} \|X\|_{\infty}.
    \end{align*}
    This completes the proof. 
\end{proof}

We close this section with a lemma that explains how to compute derivatives of ``projections'' of functions $\Phi : \cH \to \R$. We omit the proof, which is a standard computation.

\begin{lem} \label{lem.projection.deriv}
    Let $\Phi \in C^1(\cH)$. Fix an $N$-partition $\Pi = \{\Omega_1,\ldots,\Omega_N\} \in P_N$, and set 
    \begin{align*}
        \Phi^N : (\R^d)^N \to \R, \quad \Phi^N(\bx) = \Phi\big( Y_{\bx, \Pi}^N \big).
    \end{align*}
    Then $\Phi^N \in C^1\big( (\R^d)^N \big)$, with 
    \begin{align*}
        D_{x^i} \Phi^N(\bx) = \E\big[ \nabla \Phi(Y_{\bx, \Pi}^N) 1_{\Omega^i} \big] = \frac{1}{N} \E\big[ \nabla \Phi^N | \Omega^i \big].
    \end{align*}
    If $\Phi \in C^2(\cH)$, then $\Phi^N \in C^2\big( (\R^d)^N \big)$, and 
    \begin{align*}
        D_{x^ix^j} \Phi^N(\bx) = \nabla^2 \Phi (Y_{\bx, \Pi}^N) \big[1_{\Omega^i}, 1_{\Omega^j} \big].
    \end{align*}
\end{lem}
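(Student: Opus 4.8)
The plan is to realize $\Phi^N$ as the composition of $\Phi$ with a bounded linear map and then apply the chain rule in Hilbert space. Fix the $N$-partition $\Pi = \{\Omega_1,\dots,\Omega_N\}$ and define the linear operator $\iota_\Pi : (\R^d)^N \to \cH$ by $\iota_\Pi(\bx) = Y_{\bx,\Pi}^N = \sum_{i=1}^N x^i \mathbf 1_{\Omega^i}$. Since the sets $\Omega^i$ are disjoint with $\bP(\Omega^i) = 1/N$, we have $\|\iota_\Pi(\bx)\|_2^2 = \frac1N\sum_{i=1}^N |x^i|^2$, so $\iota_\Pi$ is bounded, and by construction $\Phi^N = \Phi\circ \iota_\Pi$. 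As $\iota_\Pi$ is linear and bounded, it is $C^\infty$ and its Fréchet derivative at every point equals $\iota_\Pi$ itself.

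First I would compute the first derivative. By the chain rule for Fréchet derivatives, $\Phi^N \in C^1\big((\R^d)^N\big)$, and for any direction $\mathbf h = (h^1,\dots,h^N)$,
\[
   D\Phi^N(\bx)[\mathbf h] = \big\langle \nabla\Phi(Y_{\bx,\Pi}^N), \iota_\Pi(\mathbf h)\big\rangle = \E\Big[\nabla\Phi(Y_{\bx,\Pi}^N)\cdot \sum_{i=1}^N h^i\mathbf 1_{\Omega^i}\Big] = \sum_{i=1}^N h^i\cdot \E\big[\nabla\Phi(Y_{\bx,\Pi}^N)\mathbf 1_{\Omega^i}\big].
\]
Reading off the $x^i$-block with respect to the standard inner product on $(\R^d)^N$ gives $D_{x^i}\Phi^N(\bx) = \E[\nabla\Phi(Y_{\bx,\Pi}^N)\mathbf 1_{\Omega^i}]$, and since $\bP(\Omega^i) = 1/N$ this equals $\frac1N\E[\nabla\Phi(Y_{\bx,\Pi}^N)\mid \Omega^i]$. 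Continuity of $\bx\mapsto D_{x^i}\Phi^N(\bx)$ follows from the continuity of $\nabla\Phi$ on $\cH$ together with the (Lipschitz) continuity of $\bx\mapsto Y_{\bx,\Pi}^N$. When $\Phi\in C^2(\cH)$, applying the chain rule once more to $\bx\mapsto D\Phi^N(\bx)$ yields $\Phi^N\in C^2$ with $D^2\Phi^N(\bx)[\mathbf h,\mathbf k] = \nabla^2\Phi(Y_{\bx,\Pi}^N)[\iota_\Pi(\mathbf h),\iota_\Pi(\mathbf k)]$; specializing $\mathbf h$ to be supported on the $i$-th block and $\mathbf k$ on the $j$-th block, and expanding in the standard basis $e_1,\dots,e_d$ of $\R^d$, gives $\big(D_{x^ix^j}\Phi^N(\bx)\big)_{q,r} = \nabla^2\Phi(Y_{\bx,\Pi}^N)[\mathbf 1_{\Omega^i}e_q,\mathbf 1_{\Omega^j}e_r]$, which is precisely the asserted identity in the compact notation of the statement.

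There is no substantive obstacle here: the only points requiring minor care are the bookkeeping between the finite-dimensional inner product on $(\R^d)^N$ and the inner product on $\cH$, the factor $1/N$ coming from the equiprobability of $\Pi$, and the component-wise interpretation of the Hessian expression — which is exactly why the statement is flagged as a standard computation and its proof is omitted.
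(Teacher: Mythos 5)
Your proof is correct and is precisely the ``standard computation'' that the paper omits: writing $\Phi^N = \Phi\circ\iota_\Pi$ for the bounded linear embedding $\iota_\Pi$ and applying the Hilbert-space chain rule, then reading off the blocks using $\bP(\Omega^i)=1/N$. Nothing further is needed.
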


\section{Error estimates}

\label{sec.main}

The goal of this section is to prove Theorem \ref{thm.main}. We split into two cases: first, when $d = 1$, for which the geometry of the real line allows for certain simplifications; and then the more involved case when $d \ge 2$.

Observe that, in view of the Lipschitz regularity estimates in Proposition \ref{prop.uniformlip} and Corollary \ref{cor.Ulip}, and in view of Lemma \ref{lem.linf.touching}, we can choose $R > 0$ such that $V^N$ is the unique viscosity solution of
\begin{align} \label{hjb.n.R} \tag{$\text{HJB}_{N,R}$}
    \begin{cases}
        \ds - \partial_t V^N - \kappa \sum_{i,j = 1}^N \tr\big(D_{x^ix^j} V^N \big) + \frac{1}{N} \sum_{i = 1}^N H^R\big(x^i, ND_{x^i} V^N, m_{\bx}^N\big) = 0 \quad (t,\bx) \in [0,T) \times (\R^d)^N, \vspace{.2cm}
        \\
       \ds  V^N(T,\bx) = G(m_{\bx}^N).
    \end{cases}
\end{align}
and $U$ is the unique viscosity solution of 
\begin{align} \label{hjb.inf.R} \tag{$\text{HJB}_{\infty,R}$}
   \begin{cases} 
   \ds - \partial_t U(t,m) - \kappa \com U(t,m) + \int_{\R^d} H^R\big(x, D_mU(t,m,x),m \big) m(dx) = 0, \quad (t,m) \in [0,T) \times \cP_2(\R^d), \vspace{.2cm}
   \\
   \ds  U(T,m) = G(m), \quad m \in \cP_2(\R^d),
   \end{cases}
\end{align}
where 
\begin{align*}
    H^R(x,p,m) = H(x,\pi_R(m),m), 
\end{align*}
and $\pi_R : \R^d \to \R^d$ is the orthogonal projection onto the ball of radius $R$. Since $H^R$ is (globally) Lipschitz continuous thanks to Assumption \ref{assump.main}, we can assume (up to a relabeling of the Hamiltonian) throughout this Section that $H$ satisfies 
\[
    |H(x, p, m) - H(x',p', m')| \le C_H( |x- x'| + |p - p'| + \mbf d_1(m, m') ), 
    \quad x,x',p,p' \in \Rd, \; m,m' \in \mcl P_2.
\]

Throughout the section, for $a,b \in \R$, we use the notation $a \lesssim b$ to mean thatn $a \le Cb$ for some constant $C > 0$ that depends only on universal quantities, the dimension $d$, and the constants $C_H$ and $C_G$ in Assumption \ref{assump.main}, and is independent of $N$ as well as any of the small parameters (which themselves will be chosen optimally to depend on $N$) introduced in the doubling-of-variables arguments below.

\subsection{The case $d = 1$}

\label{subsec.d1}

For $N \in \N$, and $\eps, \eta, \lambda, \gamma > 0$, we consider the function $\Psi^N_{\eps, \eta, \lambda, \gamma} : [0,T] \times \cP_2 \times \R^N \to \R$, defined by
\begin{align} 
    \Psi_{\eps, \eta, \lambda, \gamma}^N(t,m,\bx) &= U(t,m) - V^N(t,\bx) - \frac{1}{2\eps} \bd_2^2\big(m,m_{\bx}^N \big) \notag \\
    &- \frac{\eta}{2} \big(1 + T-t\big) M_2(m_{\bx}^N) - \lambda(T-t) - \gamma \Big( \frac{1}{t} - \frac{1}{T} \Big).\label{def.psin}
\end{align}

\begin{lem}  \label{lem.ovX}
    For each $\eps, \eta, \lambda, \gamma > 0$, there exists a maximum point $(\ov{t}, \ov{m}, \ov{\bx})$ of $\Psi_{\eps, \eta, \lambda, \gamma}^N$.
\end{lem}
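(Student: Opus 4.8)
The plan is to obtain the maximum point from a maximizing sequence, using the explicit penalization terms in \eqref{def.psin} to control the variables $t$ and $\bx$, and an upper-semicontinuity argument for the measure variable $m$. Throughout, write $\Psi := \Psi^N_{\eps,\eta,\lambda,\gamma}$.

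The first step is to check that $\Psi$ is bounded above. Applying Corollary \ref{cor.Ulip} with base point $\delta_0$ (the Dirac mass at the origin) and Proposition \ref{prop.uniformlip} with base point $\mathbf 0 \in (\R^d)^N$, together with $|G(\delta_0)| \le C_G$, one gets $U(t,m) \lesssim 1 + \bd_1(m,\delta_0)$ and $-V^N(t,\bx) \lesssim 1 + \bd_1(m_{\bx}^N,\delta_0) \le 1 + M_2(m_{\bx}^N)^{1/2}$; moreover $\bd_1(m,\delta_0) \le \bd_2(m,\delta_0) \le \bd_2(m,m_{\bx}^N) + M_2(m_{\bx}^N)^{1/2}$ by the triangle inequality. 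Feeding these into \eqref{def.psin} and using Young's inequality to absorb $\bd_2(m,m_{\bx}^N)$ into $-\tfrac1{2\eps}\bd_2^2(m,m_{\bx}^N)$ and $M_2(m_{\bx}^N)^{1/2}$ into $-\tfrac\eta2 M_2(m_{\bx}^N)$ (all remaining penalization terms being nonpositive), one bounds $\Psi$ above by a finite constant $S := \sup\Psi$. Note also that $\Psi$ is continuous on $[0,T]\times\cP_2\times(\R^d)^N$, where $\cP_2$ carries the $\bd_2$-topology.

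The second step is compactness along a maximizing sequence $(t_n,m_n,\bx_n)$. Rerunning the estimate of the first step but keeping a definite fraction of each nonpositive term, one obtains a bound of the form
\[
    \gamma\Big(\tfrac1{t_n}-\tfrac1T\Big) + \tfrac\eta4\, M_2(m_{\bx_n}^N) + \tfrac1{4\eps}\,\bd_2^2(m_n,m_{\bx_n}^N) \;\le\; C_1 < \infty,
\]
uniformly in $n$. Hence $t_n$ stays in a fixed compact subinterval of $(0,T]$, $\bx_n$ stays bounded in $(\R^d)^N$ (so $M_2(m_{\bx_n}^N)$ is bounded), and then $M_2(m_n)^{1/2}\le \bd_2(m_n,m_{\bx_n}^N)+M_2(m_{\bx_n}^N)^{1/2}$ is bounded as well. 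Passing to a subsequence, $t_n\to\ov t\in(0,T]$ and $\bx_n\to\ov\bx$; since a family of measures with uniformly bounded second moments is tight, we may also assume $m_n\rightharpoonup\ov m$ narrowly, with $\ov m\in\cP_2$ by Fatou's lemma.

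The final step is to pass to the limit, and this is where the only real subtlety lies. Since $\bx_n\to\ov\bx$, we have $m_{\bx_n}^N\to m_{\ov\bx}^N$ in $\bd_2$ and $M_2(m_{\bx_n}^N)\to M_2(m_{\ov\bx}^N)$, and $V^N(t_n,\bx_n)\to V^N(\ov t,\ov\bx)$ by continuity. Because $\{m_n\}$ has uniformly bounded second moments, narrow convergence $m_n\rightharpoonup\ov m$ upgrades to $\bd_1(m_n,\ov m)\to0$, so the $\bd_1$-Lipschitz and $\tfrac12$-Hölder-in-time bound of Corollary \ref{cor.Ulip} gives $U(t_n,m_n)\to U(\ov t,\ov m)$. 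Finally, $\bd_2^2$ is lower semicontinuous under narrow convergence of both of its arguments (take a narrow limit of the optimal plans, whose marginals are tight, and apply the portmanteau theorem to the continuous nonnegative integrand $|x-y|^2$), so $\liminf_n\bd_2^2(m_n,m_{\bx_n}^N)\ge\bd_2^2(\ov m,m_{\ov\bx}^N)$. Combining these facts, $\limsup_n\Psi(t_n,m_n,\bx_n)\le\Psi(\ov t,\ov m,\ov\bx)$; since the left-hand side equals $S$, the point $(\ov t,\ov m,\ov\bx)$ realizes the supremum. I expect the delicate point—the reason the statement is not entirely routine—to be exactly this measure variable: $\cP_2$ is not locally compact, so one cannot simply invoke compactness of a $\bd_2$-ball; the argument survives only because the $\bd_2^2$-penalization enters with a minus sign, so that \emph{lower} semicontinuity of $\bd_2^2$ (valid for narrow convergence) is precisely what is needed, whereas the genuinely continuous dependence of the reward $U$ on the measure is only with respect to the weaker metric $\bd_1$, which does metrize narrow convergence on second-moment-bounded sets.
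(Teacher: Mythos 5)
Your proof is correct and takes essentially the same route as the paper, which omits the argument for this lemma as being a simpler version of the proof of Lemma \ref{lem.fistpenalization}: there too, the penalization terms bound the maximizing sequence, the measure variable is handled by extracting a $\bd_1$-convergent (narrowly convergent, tight) subsequence, and the conclusion rests on the $\bd_1$-continuity of $U$ and $V^N$ together with the lower semicontinuity of $\bd_2^2$ (via narrow limits of optimal couplings) entering with a favorable sign. Your identification of that sign as the crux is exactly right, and your handling of the finite-dimensional variable $\bx$ through the $M_2(m_{\bx}^N)$ penalization is precisely the simplification that distinguishes this lemma from Lemma \ref{lem.fistpenalization}.
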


\begin{proof} The proof is a simpler version of the proof of Lemma \ref{lem.fistpenalization} below, and so is omitted. 
\end{proof}

\begin{prop} \label{prop.1d.lambdabound}
There is a constant $C_0$ with the following property: if $\eps, \lambda, \eta, \gamma > 0$ and there exists a maximum point $(\ov{t}, \ov{m}, \ov{\bx})$ of $\Psi_{\eps, \eta, \lambda, \gamma}^N$ with $\ov{t} < T$, then 
\begin{align*}
    \lambda \leq  C_0\Big(\eps + \eta + \frac{1}{\eps \eta N} \Big).
\end{align*}
\end{prop}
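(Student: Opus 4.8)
The plan is to run a doubling-of-variables argument in the Hilbertian lift, exploiting the special one-dimensional quantization property from Lemma \ref{lem.1dquant}. Suppose $(\ov t, \ov m, \ov \bx)$ is a maximum point of $\Psi^N_{\eps,\eta,\lambda,\gamma}$ with $\ov t < T$. By Lemma \ref{lem.1dquant}, applied with $m = \ov m$ and $\bx = \ov \bx$, I would produce a random variable $\ov X \in \cH$ with $\cL(\ov X) = \ov m$ such that $\cL(\ov X, Y^N_{\ov\bx, \Pi^{\reg}_N})$ is an optimal coupling of $\ov m$ and $m^N_{\ov\bx}$ — hence $\|\ov X - Y^N_{\ov\bx,\Pi^{\reg}_N}\|_2 = \bd_2(\ov m, m^N_{\ov \bx})$ — and moreover $\|\ov X - \EE[\ov X \mid \cF_{\Pi^{\reg}_N}]\|_1 \le 2\|\ov X\|_\infty N^{-1}$. (I would first record an a priori $L^\infty$-type bound on $\ov X$, or rather on $\|\ov X\|_\infty$, by a standard argument using the Lipschitz bounds of Proposition \ref{prop.uniformlip} / Corollary \ref{cor.Ulip} and the penalization terms; in fact for the final statement only a bound independent of $N$ of the form $\|\ov X\|_\infty \lesssim 1 + \text{something}$ is needed — more precisely, the $\eta M_2(m^N_{\ov\bx})$ penalization combined with $\eps$ controls the relevant moments so that $\|\ov X - Y^N_{\ov\bx,\Pi^{\reg}_N}\|_2 \lesssim \sqrt\eps$ and one can cap the contribution.)

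Next, exploiting that $(\ov t, \ov m, \ov \bx)$ maximizes $\Psi^N$, the map $X \mapsto \hat U(t, X) - \frac{1}{2\eps}\|X - Y^N_{\ov\bx,\Pi^{\reg}_N}\|_2^2 - (\text{frozen terms})$ is maximized, over $(t,X) \in [0,T]\times\cH$, at $(\ov t, \ov X)$: indeed $\hat U(t,X) \le U(t,\cL(X))$-type reasoning shows the lifted functional is dominated by $\Psi^N$ evaluated at $\cL(X)$ up to the $\bd_2$ terms, and equality holds at $(\ov t, \ov X)$ because the coupling is optimal. Thus $\hat U$ is touched from above at $(\ov t, \ov X)$ by the smooth test function $\Phi(t,X) = \frac{1}{2\eps}\|X - Y^N_{\ov\bx,\Pi^{\reg}_N}\|_2^2 + \lambda(T-t) + \gamma(\frac1t - \frac1T) + (\text{terms not depending on }X)$, and since $\ov t < T$ the subsolution inequality for \eqref{hjb.hilbert} gives
\[
    \lambda - \frac{\gamma}{\ov t^{\,2}} - \kappa \sum_{k=1}^d \frac{1}{\eps} \le - \hat H\big(\ov X, \tfrac{1}{\eps}(\ov X - Y^N_{\ov\bx,\Pi^{\reg}_N})\big) + (\text{time-derivative of }\eta\text{-term}).
\]
Symmetrically, using the minimality of $\bx \mapsto V^N(\ov t, \bx) + \frac{1}{2\eps}\bd_2^2(\ov m, m^N_\bx) + \frac\eta2(1 + T - \ov t)M_2(m^N_\bx)$ at $\ov \bx$ — together with Lemma \ref{lem.projection.deriv} to compute $D_{x^i}$ of $\bx \mapsto \frac{1}{2\eps}\|\ov X - Y^N_{\bx,\Pi^{\reg}_N}\|_2^2$, which gives $ND_{x^i}$ of this term equal to $\EE[\tfrac1\eps(\ov X - Y^N_{\ov\bx,\Pi^{\reg}_N}) \mid \cF_{\Pi^{\reg}_N}]$ restricted appropriately — I would apply the supersolution inequality for \eqref{hjb.n} (equivalently \eqref{hjb.n.R}) at $\ov\bx$. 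Subtracting the two inequalities, the $\kappa$-terms and $\gamma/\ov t^2$ terms cancel or have favorable sign (here one uses $D_{xm}$/$D_{mm}$ being zero for the quadratic test and the sign of the penalizations — this is the one place where $d=1$ with $\kappa$ present still works, since the quadratic in $X$ has vanishing "cross" second derivatives), leaving
\[
    \lambda \lesssim \eta + \Big| \hat H\big(\ov X, \tfrac1\eps(\ov X - Y^N_{\ov\bx})\big) - \frac1N\sum_i H\big(\ov x^i, \EE[\tfrac1\eps(\ov X - Y^N_{\ov\bx})\mid\cF_{\Pi^{\reg}_N}]\big\vert_{\Omega^i}, m^N_{\ov\bx}\big)\Big|.
\]

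The remaining — and main — task is to bound this Hamiltonian discrepancy. Using the (now globally) Lipschitz regularity of $H$ and rewriting the particle sum as an integral over $\Omega$, the difference is controlled by $C_H$ times
\[
    \big\|\ov X - Y^N_{\ov\bx,\Pi^{\reg}_N}\big\|_1 \;+\; \tfrac1\eps\big\|\,(\ov X - Y^N_{\ov\bx,\Pi^{\reg}_N}) - \EE[\ov X - Y^N_{\ov\bx,\Pi^{\reg}_N}\mid \cF_{\Pi^{\reg}_N}]\,\big\|_1 \;+\; \bd_1(\cL(\ov X), m^N_{\ov\bx}),
\]
plus the $(1+|p|)$ prefactor which is handled by noting $|p| = \tfrac1\eps|\ov X - Y^N_{\ov\bx}|$ has $L^2$-norm $\lesssim 1$ after the $\sqrt\eps$ bound, so products like $\|p\|_2 \cdot \|\cdot\|_2$ stay $\lesssim$ things times $\eps^{-1}\cdot\eps = 1$; more carefully one gets terms of size $\eta$, $\eps$, and $\eps^{-1}$ times the $L^1$-quantization defect. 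Since $Y^N_{\ov\bx,\Pi^{\reg}_N}$ is $\cF_{\Pi^{\reg}_N}$-measurable, $(\ov X - Y^N_{\ov\bx}) - \EE[\ov X - Y^N_{\ov\bx}\mid\cF_{\Pi^{\reg}_N}] = \ov X - \EE[\ov X\mid\cF_{\Pi^{\reg}_N}]$, and Lemma \ref{lem.1dquant} bounds its $L^1$-norm by $2\|\ov X\|_\infty N^{-1}$. The first and third displayed terms are $\le \bd_2(\ov m, m^N_{\ov\bx}) \lesssim \sqrt\eps$ (after absorbing a harmless factor), so altogether
\[
    \lambda \lesssim \eta + \eps + \frac{1}{\eps}\cdot\frac{\|\ov X\|_\infty}{N} \lesssim \eta + \eps + \frac{1}{\eps\eta N},
\]
where the last step uses the a priori bound $\|\ov X\|_\infty \lesssim 1 + \eta^{-1}$-type estimate coming from the $\eta M_2$ penalization (this is where the $\eta^{-1}$ in $\frac{1}{\eps\eta N}$ originates). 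I expect the main obstacle to be precisely this last a priori $L^\infty$ (or suitable truncated-moment) bound on $\ov X$ with the explicit $\eta$-dependence, and the bookkeeping of the $(1+|p|)$ prefactor in \eqref{normalHreg} so that all cross-terms collapse to the clean form $\eps + \eta + (\eps\eta N)^{-1}$; the use of $R$-truncation from the start of Section \ref{sec.main} (so $H$ is globally Lipschitz) and the $d=1$ quantization lemma are what make everything else routine.
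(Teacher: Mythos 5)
Your overall skeleton for the first-order part matches the paper's proof: the one-dimensional quantization lemma to produce $\ov X$ with an optimal coupling and $\|\ov X-\E[\ov X\mid\cF_{\Pi_N^{\reg}}]\|_1\le 2\|\ov X\|_\infty N^{-1}$, the lifted maximum point, the a priori bounds $\|\eps^{-1}(\ov X-Y^N_{\ov\bx})\|_\infty\lesssim1$ and $\|\ov X\|_\infty\lesssim1+\eta^{-1}$, and the Hamiltonian error split yielding the $(\eps\eta N)^{-1}$ term. The genuine gap is in the second-order (common noise) terms when $\kappa>0$. You apply the sub- and supersolution tests \emph{separately}, each with the quadratic $\tfrac{1}{2\eps}\|\cdot-Y^N_{\ov\bx}\|_2^2$ (resp.\ its projection in $\bx$) as a frozen smooth test function, and assert that the $\kappa$-terms "cancel or have favorable sign" because "the quadratic in $X$ has vanishing cross second derivatives." This is false: with $\ov\bx$ frozen, $\nabla^2\big(\tfrac{1}{2\eps}\|X-Y^N_{\ov\bx}\|_2^2\big)=\eps^{-1}\mathrm{Id}_{\cH}$, so $\sum_{k=1}^d\nabla^2\Phi[e_k,e_k]=d/\eps$; likewise, by Lemma \ref{lem.projection.deriv}, $\sum_{i,j}\tr D_{x^ix^j}\big(\tfrac{1}{2\eps}\|\ov X-Y^N_{\bx}\|_2^2\big)=d/\eps$. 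In the subtracted inequality these two contributions \emph{add} with the unfavorable sign, leaving an uncontrolled $O(\kappa/\eps)$ term on the right-hand side. The cancellation you want holds only for the \emph{joint} Hessian $\cA_N$ of $(X,\bx)\mapsto\tfrac{1}{2\eps}\|X-Y^N_{\bx}\|_2^2$, which annihilates the diagonal constant vector $\bm 1=(1,(1,\dots,1))$; to transfer this structure to the individual jets one needs the Hilbert-space Crandall--Ishii lemma (Lemma \ref{lem.cks}), which is exactly what the paper uses: it produces $(a_1,p_1,A_1)\in\ov J^{2,+}\hat U$ and $(a_2,p_2,A_2)\in\ov J^{2,-}V^N$ whose block-diagonal is dominated by $\cB+\eps\cB^2$ with $\cB\bm1=0$, whence $A_1[1,1]-\sum_{i,j}(A_2)_{ij}\le0$. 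As written, your argument proves the proposition only for $\kappa=0$.

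A secondary, fixable imprecision: you at one point bound $\bd_2(\ov m,m^N_{\ov\bx})\lesssim\sqrt\eps$ (the crude penalization bound), which would give $\lambda\lesssim\sqrt\eps$ rather than the claimed $\eps$; the correct bound $\|\ov X-Y^N_{\ov\bx}\|_2\lesssim\eps$ follows from Lemma \ref{lem.linf.touching} and the $\bd_1$-Lipschitz regularity (the paper's estimate \eqref{eq:claim01/10}), and since you cite the right tools for it this is bookkeeping rather than a conceptual error.
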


\begin{proof}
Let $(\ov{t}, \ov{m}, \ov{\bx})$ be as in the statement of Lemma \ref{lem.ovX}. Applying Lemma \ref{lem.1dquant}, we find $\ov{X}$ with $\cL(\ov{X}) = \ov{m}$ such that $\cL\big( \ov{X}, Y_{\ov{\bx}, \Pi_N^{\reg}}^N)$ is an optimal coupling of $\ov{m}$ and $m_{\ov{\bx}}^N$, and 
\begin{align} \label{ovx.l1bound}
    \norm{ \ov X - \E\big[ \ov X | \cF_{\Pi_N}^{\reg} \big] }_1 \leq \frac{2}{N} \|\ov{X}\|_{\infty}.
\end{align}
It follows that $(\ov{t}, \ov{\bX}, \ov{\bx})$ is a maximum point of the function
    \begin{align*}
        [0,T] \times \cH \times \R^N \ni (t,X,\bx) &\mapsto \hat{U}(t,X) - V^N(t,\bx)  - \frac{1}{2\eps} \|X-Y_{\bx}^N\|_2^2 \\
        &- \frac{\eta}{2} (1 + T - t) \|Y_{\bx}^N\|_2^2 - \lambda(T-t) - \gamma\Big(\frac{1}{t} - \frac{1}{T}\Big), 
    \end{align*}
    where here and for the remainder of the argument we set $Y_{\bx}^N = Y_{\bx, \Pi_N^{\reg}}^N$ for simplicity, and $\Pi_N^{\reg}$ is defined in Subsection \ref{sec.simquant}. We next claim that 
    \begin{align}
    \label{eq:claim01/10}
        \norm{ \frac{\ov X - Y_{\ov \bx}^N}{\eps} }_{\infty} \lesssim 1, \quad \norm{\ov X}_{\infty} + \norm{\ov Y_{\ov{\bx}^N}}_{\infty} \lesssim 1 + \eta^{-1}.
    \end{align}
    Indeed, the first bound comes from Lemma \ref{lem.linf.touching} and the $L^1$-Lipschitz regularity of $\hat{U}$, and then the second comes from using the spatial regularity from Proposition \ref{prop.uniformlip} together with Lemma  \ref{lem.projection.deriv} (to compute the gradient of $\bx \mapsto \norm{X - Y_{\bx}^N}_2^2 $ and $\bx \mapsto \|Y_{\bx}^N\|_2^2$) to find
    \begin{align*}
        \Big\| &\E\Big[ \frac{ \ov X - Y_{\ov \bx}^N}{\eps}  | \cF_{\Pi_N^{\reg}}\Big] + \eta(1 + T - \ov t) Y_{\ov{\bx}}^N \Big\|_{\infty} \lesssim 1 
        \\
        &\implies \eta \| Y_{\ov \bx}^N\|_{\infty} 
        \lesssim 1 + \norm{ \frac{ \ov X - Y_{\ov \bx}^N}{\eps} }_{\infty} \lesssim 1
        \\
        &\implies \|\ov X \|_{\infty} \lesssim \eps + \|Y_{\ov \bx}\|_{\infty} \lesssim 1 + \eta^{-1}.
    \end{align*}   
    Now we define $\mcl A_N \in \text{Sym}(\mcl H \times \R^N)$ by
    \begin{align*}
        \cA_N[(X,\bx), (Y,\by)] = \frac{1}{\eps} \langle X - Y_{\bx}^N, Y - Y_{\by}^N \rangle_{\cH}, \quad (X,\bx), (Y,\by) \in \cH \times \R^N.
    \end{align*}
    A direct computation shows that we can also express $\cA_N$ as
\begin{equation}
    \label{eq:defnA_N}
        \mcl A_N[ (X,\bx), (Y,\by)]
        := 
        \frac{1}{\eps}
	\left\langle
	\begin{pmatrix}
		\Id_{\mcl H} & -\mbf Q^N \\
		-(\mbf Q^N)^* & N^{-1} \Id_N
	\end{pmatrix}
	\begin{pmatrix}
	X \\
	\bx
	\end{pmatrix}
	,
	\begin{pmatrix}
		Y\\
		\by
	\end{pmatrix}
	\right\rangle, 
    \quad (X,\bx), (Y,\by) \in \mcl H \times \R^N,
\end{equation}
    where $\mbf Q^N : \R^N \to \cH$ and $(\bm Q^N)^* : \cH \to \R^N$ are bounded linear operators defined by 
    \begin{align*}
        \bm Q^N \bx = Y_{\bx}^N, \quad (\bm Q^N)^* X = \Big( \E\big[X 1_{[0,1/N)} \big], \E\big[X 1_{[1/N,2/N)} \big],\ldots, \E\big[X 1_{[(N-1)/N, 1)}\big] \Big). 
    \end{align*}
    We now set $\cH_0 = \text{span}(1)$, i.e. $\cH_0$ is the one-dimensional subspace of $\cH$ consisting of all constant random variables. We define $P_{\cH_0}  : \cH \to \cH$ to be the orthogonal projection onto $\cH_0$, and set $P_{\cH_0^{\perp}} = \text{Id}_{\cH} - P_{\cH_0}$. 
    We now suppose that $\ov{t} < T$, and apply Lemma \ref{lem.cks} to the maximum $(\oline t, \oline X, \oline \bx)$, taking $H_1 = \cH$, $Z_1 = \cH_0$, $H_2 = \R^N, Z_2 = \R^N$, $U_1 = U$, $U_2 = - V^N$, and
    \begin{align*}
        &a = - \lambda - \frac{\eta}{2} \|Y_{\ov{\bx}}^N\|_2^2 - \gamma \ov{t}^{-2}
        \\
        & (p_1,p_2) = \Big( \frac{\ov X - Y_{\ov \bx}^N}{\eps}, \Big( -\frac{1}{N} \EE\left[ \frac{\oline X - Y^N_{\oline\bx}}{\eps} \mid \Omega_i \right]  + \frac{\eta}{N} (1 + T - \ov t) \ov{x}^i \Big)_{i=1}^N\Big)
        \\
        &\cA = \cA_N, 
    \end{align*}
    and $\kappa = \eps$. Observe that the assumption \eqref{BT} introduced before Lemma \ref{lem.cks} is satisfied here in view of the fact that $\hat U$ and $V^N$ solve evolution equations (see \cite[Remark 3.1]{CKS}).

    Using the fact that $\ov{J}^{2,-}V^N = - \ov{J}^{2,+}(-V^N)$ and the fact that $\norm{\cA}_{\text{op}} \leq \frac{2}{\eps}$, we deduce that there exist $a_1,a_2 \in \R$, $A_1 \in  \text{Sym}(\cH)$ and $A_2 \in  \text{Sym}(\RR^{N})$ such that $P_{\cH_0} A_1 P_{\cH_0} = A_1$ (in particular, since $\cH_0$ is 1-dimensional, $A_1$ can be represented by the single number $A_1 \cdot 1$), 
\[
	\left(a_1, \,  \frac{\oline X - Y^N_{\oline \bx}}{\eps}, \, A_1 + \frac{3}{\eps} P_{\cH_0^{\perp}} \right) \in \oline{J}^{2,+}\hat U(\oline t,\oline X),
\]
\[
	\left(a_2, \,  \left( \frac{1}{N} \EE\left[ \frac{\oline X - Y^N_{\oline\bx}}{\eps} \mid \Omega_i \right]  - \frac{\eta}{N} (1 + T - \ov t) \ov{x}^i \right)_{i=1}^N, \,  A_2 \right) \in \oline{J}^{2,-} V^N(\oline t,\oline \bx),
\]
\[
    a_1 - a_2 = - \lambda - \frac{\eta}{2} \norm{Y_{\ov \bx}^N}_2^2 - \gamma \ov{t}^{-2}  
\]
and
\begin{equation}\label{mixedmatrix}
	\begin{pmatrix}
		A_1 & 0 \\
		0 & -A_2
	\end{pmatrix}
	\le \cB + \eps \cB^2, 
\end{equation}
where 
\begin{equation}
\cB =	\begin{pmatrix}
		P_{\cH_0} & 0 \\
		0 & \Id_{\R^N}
	\end{pmatrix}
	(\mcl A_N + \eps\mcl A_N^2)
	\begin{pmatrix}
		P_{\cH_0} & 0 \\
		0 & \Id_{\R^N}
	\end{pmatrix} .
\end{equation}
Note that we have used Lemma \ref{lem.projection.deriv} to characterize the subjets of $V^N$. 

Applying the $L$-viscosity subsolution definition for $\hat U$ and using the fact that $P_{\mathcal{H}_0^{\perp}}[1,1] = 0$, we find
\[
	- a  - \kappa  A_1  [1,1]  + \hat H\left( \oline X, \frac{\oline X - Y^N_{\oline \bx} }{\eps}\right)  \le 0,
\]
while the supersolution definition for $V^N$ together with Lemma \ref{lem.projection.deriv} gives
\begin{align*}
	 &-a_2 - \kappa \sum_{1 \le i,j \le N}  (A_2)_{i,j} +  \hat H\left(Y^N_{\oline x},  \EE\left[  \frac{\oline X - Y^N_{\oline \bx}}{\eps}\,  \big| \, \mcl F_\Pi \right]    - \eta (1 + T - \ov{t}) Y_{\ov \bx}^N \right) 
     \\
     &=-a_2 - \kappa \sum_{1 \le i,j \le N}   (A_2)_{i,j} +  \frac{1}{N} \sum_{i = 1}^N H\Big(\ov{x}^i, \E\Big[ \frac{\ov{X} - Y_{\ov{\bx}}^N}{\eps} | \Omega^i\Big] - \eta\Big(1 + T - \ov{t}\Big) \ov{x}^i, m_{\ov{\bx}}^N \Big)
    	\ge 0.
\end{align*}
Set $\bm{1} = \big(1, (1,\ldots,1) \big) \in \cH \times \R^N$, and notice that $\cA \bm{1} = 0$, hence $\cB \bm{1} = 0$. As a consequence, the matrix inequality \eqref{mixedmatrix} implies
\begin{align*}
	 A_1[1,1] - \sum_{i,j=1}^N   (A_2)_{ij} = \Big \langle \bm{1}, \,\,  \begin{pmatrix} A_1 & 0 
    \\
     0 & -A_2
	\end{pmatrix} \bm{1} \Big \rangle_{\cH \times \R^N} \leq \big \langle \bm{1}, \, \big(\cB + \eps \cB^2\big) \bm{1} \rangle_{\cH \times \R^N} = 0. 
\end{align*}
Therefore, subtracting the inequalities for $\hat U$ and $V^N$, we get
\begin{align*}
   \lambda + \frac{\eta}{2} \norm{Y^N_{\ov\bx}}_2^2 &\leq -a_1 + a_2
   \\
    &\leq - \hat H\left( \oline X, \frac{\oline X - Y^N_{\oline \bx} }{\eps}  \right)
    + \hat H\left(Y^N_{\oline x},  \EE\left[  \frac{\oline X - Y^N_{\oline \bx}}{\eps}\,  \big| \, \mcl F_\Pi \right] - \eta (1 + T - \ov{t}) Y_{\ov \bx}^N \right).
\end{align*}
The Lipschitz regularity of $\hat H$ together with \eqref{eq:claim01/10} implies that
\begin{align*}
    \hat H\left(Y^N_{\oline x},  \EE\left[  \frac{\oline X - Y^N_{\oline \bx}}{\eps}\,  \big| \, \mcl F_\Pi \right] - \eta (1 + T - \ov{t}) Y_{\ov \bx}^N \right)
     &- \hat H\left(\ov X,  \EE\left[  \frac{\oline X - Y^N_{\oline \bx}}{\eps}\,  \big| \, \mcl F_\Pi \right] \right) \\
     &\lesssim  \norm{\oline X - Y^N_{\oline \bx}}_2 + \eta \|Y_{\ov{\bx}}^N \|_2  \\
     &\lesssim \eps + \eta \|Y_{\ov{\bx}}^N\|_2
\end{align*}
and the Lipschitz regularity of $H$ in the $p$-variable together with \eqref{ovx.l1bound} gives
\begin{align*}
   \hat{H}\Big( &\ov X, \E\Big[ \frac{\oline X - Y^N_{\oline \bx} }{\eps} \mid \cF_{\Pi} \Big]\Big) - \hat{H}\Big( \ov X, \frac{\oline X - Y^N_{\oline \bx}}{\eps} \Big) 
   \lesssim \frac{1}{\eps} \| \ov{X} - \E[\ov{X} | \cF_{\Pi}] \|_1 \lesssim \frac{1}{\eps N} \| \ov X\|_{\infty} \lesssim \frac{1}{\eps \eta N}.
\end{align*}
Putting these estimates together, we deduce that
\[
    \lambda + \frac{\eta}{2} \norm{Y_{\ov \bx}^N}_2^2
    \lesssim \eps + \frac{1}{\eps \eta N} + \eta\|Y_{\ov \bx}\|_2.
\]
An application of Young's inequality completes the proof.
\end{proof}

We are now ready to complete the proof of Theorem \ref{thm.main} in the case $d = 1$. 

\begin{proof}[Proof of Theorem \ref{thm.main}, $d = 1$]
    By Proposition \ref{prop.1d.lambdabound}, there is a constant $C_0$ such that if 
    \begin{align*}
        \lambda = C_0\Big(\eps + \eta + \frac{1}{N \eps \eta}\Big), 
    \end{align*}
    then $\ov{t} = T$. Thus with this choice of $\lambda$, we have 
    \begin{align*}
        U(t,m_{\bx}^N) - V^N(t,\bx) &\lesssim 
        U(T, \ov{m}) - V^N(T, \ov{\bx}) - \frac{1}{2\eps} \bd_2^2(\ov{m}, m_{\ov{\bx}}^N) + \eta\big(1 + M_2(m_{\bx}^N) \big) + \lambda + \gamma\Big( \frac{1}{t} - \frac{1}{T} \Big)
        \\
        &\lesssim \eps + \eta\big(1 + M_2(m_{\bx}^N) \big) + \eta^{-1} \eps^{-1} N^{-1} + \gamma t^{-1}.
    \end{align*}
    Sending $\gamma \downarrow 0$ and then choosing $\eps = \eta = N^{-1/3}$, we obtain the result. 
\end{proof}

\subsection{The case $d \ge 2$}

\label{subsec.main.ge2}

We focus on the upper bound for $U(t,m_{\bx}^N) - V^N(t,\bx)$, the other direction having a symmetric proof. Recall that by Proposition \ref{prop.uniformlip}, there is a constant $C_0$ independent of $N$ such that 
\begin{align*}
    |V^N(t,\bx) - V^N(s,\by)| \leq C_0 \Big(\bd_1\big(m_{\bx}^N, m_{\by}^N\big) + |t-s|^{1/2} \Big). 
\end{align*}
As a consequence, the map $\wt{V}^N : [0,T] \times \cP_2 \to \R$, defined by 
\begin{align} \label{def.tildeVn}
    \wt{V}^N(t,m) = \inf_{\bx \in (\R^d)^N} \Big\{ V^N(t,\bx) + 2C_0 \bd_1\big(m_{\bx}^N, m \big) \Big\}
\end{align}
satisfies 
\begin{align*}
    \tilde{V}^N(t,m_{\bx}^N) = V^N(t,\bx), \quad |\tilde{V}^N(t,m) - \tilde{V}^N(s,n)| \leq 2C_0 \Big( \bd_1(m,n) + |t-s|^{1/2} \Big), 
\end{align*}
for each $t,s \in [0,T]$, $\bx \in (\R^d)^N$, $m,n \in \cP_2$. It is also straightforward to check that \eqref{def.tildeVn} admits at least one minimizer. We define the lifts of $\wt{V}^N$ and $U$ to $\cH$ by
\begin{equation}
\label{eq:defnVhatNUhat17/09}
    \hat{V}^N, \hat{U} : [0,T] \times \cH \to \R, \quad \hat{V}^N(t,X) = \wt{V}^N(t,\cL(X)), \quad \hat{U}(t,X) = U(t,\cL(X)).
\end{equation}

\subsubsection{Doubling of variables and estimates from penalization}
Given parameters $\eps, \eta, \lambda, \gamma > 0$, and $N \in \N$, we define a map $\Phi_{\eps, \eta, \lambda, \gamma}^N : (0,T] \times \cP_2 \times \cP_2 \to \R$ via the formula (recall the notation $M_2(m)$ from \eqref{defn:M2})
\begin{align} \label{def.phin}
    \Phi_{\eps, \eta, \lambda, \gamma}^N(t,m,n) &= U(t,m) - \wt{V}(t,n) - \frac{1}{2\eps} \bd_2^2\big(m,n\big) \notag\\
    &- \frac{\eta}{2} \big(1 + T -t \big) M_2(m) - \lambda (T-t) - \gamma \Big( \frac{1}{t} - \frac{1}{T} \Big).
\end{align}
We also define $\hat{\Phi}_{\eps, \eta, \lambda, \gamma}^N : [0,T] \times \cH \times \cH \to \R$ by
\begin{align}\label{def.phin.lift}
    \hat{\Phi}_{\eps, \eta, \lambda , \gamma}^N(t,X,Y) &= \hat{U}(t,X) - \hat{V}^N(t,Y) - \frac{1}{2\eps} \|X - Y\|_2^2 \notag\\
    &- \frac{\eta}{2} \big(1 + T-t \big) \|X\|_2^2 - \lambda (T-t) - \gamma \Big( \frac{1}{t} - \frac{1}{T} \Big).
\end{align}
At times we write simply $\Phi^N = \Phi_{\eps, \eta, \lambda, \gamma}^N$ or $\hat{\Phi}^N = \hat{\Phi}_{\eps, \eta, \lambda, \gamma}^N$. 

We first establish the existence of maximum points of $\Phi^N$ and $\hat{\Phi}^N$ and obtain useful estimates.

\begin{lem} \label{lem.fistpenalization}
    For each $\eps, \eta, \lambda, \gamma > 0$ and $N \in \N$, the function $\Phi^N$ admits at least one maximum point $(\ov{t}, \ov{m}, \ov{n})$. Moreover, if $ \ov{X}, \ov{Y} \in \cH$ are such that $\mcl L(\oline{X}, \oline{Y})$ is an optimal coupling of $\ov{m}$ and $\ov{n}$, i.e. 
    \begin{align*}
        \cL(\ov{X}) = \ov{m}, \quad \cL(\ov{Y}) = \ov{n}, \quad \bd_2\big(\ov{m}, \ov{n}) = \|\ov{X} - \ov{Y}\|_2, 
    \end{align*}
    then $(\ov{t}, \ov{X}, \ov{Y})$ is a maximum point of $\hat{\Phi}_{\eps, \eta, \lambda,\gamma}^N$. Moreover, there is a constant $C$ independent of $\eps, \eta, \lambda, \gamma$, and $N$ such that any maximum point of $\hat{\Phi}_{\eps, \eta, \lambda,\gamma}^N$ satisfies 
    \begin{align} \label{penalization.bounds}
        \norm{\frac{\ov{X} - \ov{Y}}{\eps}}_{\infty} + \eta \norm{ \ov{X}}_{\infty} \leq C, 
    \end{align}
    and hence $\norm{\ov Y}_{\infty} \leq C(\eta^{-1} + \eps)$.
\end{lem}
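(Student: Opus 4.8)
The plan is to establish the three assertions in order: existence of a maximizer of $\Phi^N$, the transfer of maximality to $\hat\Phi^N$, and the $L^\infty$ bounds \eqref{penalization.bounds}. For existence, I would first note that $U$ is bounded above (or at least has controlled growth) by Corollary \ref{cor.Ulip} combined with the boundedness of $G$, while $\wt V^N$ inherits the lower bound coming from $G$ and the $\bd_1$-Lipschitz estimate; the penalization term $-\frac{\eta}{2}(1+T-t)M_2(m)$ is coercive in $m$, and the term $-\gamma(1/t - 1/T)$ forces $\ov t$ to stay away from $0$. Since $\bd_2^2(m,n) \ge 0$ only controls $n$ relative to $m$, I would use the Lipschitz continuity of $\wt V^N$ in $\bd_1 \le \bd_2$ to bound $-\wt V^N(t,n)$ from above by something like $-\wt V^N(t,m) + 2C_0\bd_2(m,n)$, and then absorb $2C_0 \bd_2(m,n)$ into $\frac{1}{2\eps}\bd_2^2(m,n) + C\eps$ via Young; this shows the supremum is finite and that maximizing sequences have $M_2(m)$ bounded, hence are tight. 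Lower semicontinuity of $(m,n)\mapsto \bd_2^2(m,n) + M_2(m)$ and upper semicontinuity of $U - \wt V^N$ with respect to narrow convergence along a moment-bounded sequence (which upgrades to $\bd_2$-convergence under the moment bound) then yield a maximizer $(\ov t,\ov m,\ov n)$ by a standard compactness argument.

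For the second claim, given any optimal coupling $(\ov X,\ov Y)$ of $(\ov m,\ov n)$, I would observe that for arbitrary $(t,X,Y)\in[0,T]\times\cH\times\cH$, writing $m=\cL(X)$, $n=\cL(Y)$, we have $\hat U(t,X)=U(t,m)$, $\hat V^N(t,Y)=\wt V^N(t,n)$, $\|X\|_2^2 = M_2(m)$, and $\|X-Y\|_2^2 \ge \bd_2^2(m,n)$ by definition of the Wasserstein distance as an infimum over couplings. Hence $\hat\Phi^N(t,X,Y) \le \Phi^N(t,m,n) \le \Phi^N(\ov t,\ov m,\ov n)$; on the other hand, at $(\ov t,\ov X,\ov Y)$ the inequality $\|\ov X - \ov Y\|_2^2 \ge \bd_2^2(\ov m,\ov n)$ is an equality by the choice of optimal coupling, and all other terms match, so $\hat\Phi^N(\ov t,\ov X,\ov Y) = \Phi^N(\ov t,\ov m,\ov n) = \sup \Phi^N \ge \sup\hat\Phi^N$. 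This forces $(\ov t,\ov X,\ov Y)$ to be a maximizer of $\hat\Phi^N$.

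For the $L^\infty$ bounds, which I expect to be the main obstacle since it is where the Hilbertian structure is genuinely exploited, I would apply Lemma \ref{lem.linf.touching} twice. Fixing $\ov t$ and $\ov Y$ and varying $X$, the map $X \mapsto \hat U(\ov t, X)$ is touched from below — in the sense of the lemma applied to $\Psi = \hat U(\ov t,\cdot)$, which is $L^1$-Lipschitz in $X$ since $U$ is $\bd_1$-Lipschitz (Corollary \ref{cor.Ulip}) and $\bd_1(\cL(X),\cL(X')) \le \|X-X'\|_1$ — by the smooth (indeed $C^1$, even $C^\infty$ in $X$) function $\Phi(X) = \hat V^N(\ov t,\ov Y) + \frac{1}{2\eps}\|X-\ov Y\|_2^2 + \frac{\eta}{2}(1+T-\ov t)\|X\|_2^2 + \text{const}$, whose gradient at $\ov X$ is $\frac{\ov X - \ov Y}{\eps} + \eta(1+T-\ov t)\ov X$. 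Lemma \ref{lem.linf.touching} then gives $\|\frac{\ov X - \ov Y}{\eps} + \eta(1+T-\ov t)\ov X\|_\infty \le C_0$. Symmetrically, fixing $\ov t$ and $\ov X$ and varying $Y$, the map $Y \mapsto -\hat V^N(\ov t,Y) = -\wt V^N(\ov t,\cL(Y))$ is $L^1$-Lipschitz (constant $2C_0$) and is touched from below by $Y \mapsto -\hat U(\ov t,\ov X) + \frac{1}{2\eps}\|\ov X - Y\|_2^2 + \text{const}$, whose gradient at $\ov Y$ is $\frac{\ov Y - \ov X}{\eps}$, so $\|\frac{\ov X - \ov Y}{\eps}\|_\infty \le 2C_0$. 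Combining the two displayed bounds, the triangle inequality gives $\eta(1+T-\ov t)\|\ov X\|_\infty \le \|\frac{\ov X - \ov Y}{\eps} + \eta(1+T-\ov t)\ov X\|_\infty + \|\frac{\ov X-\ov Y}{\eps}\|_\infty \le 3C_0$, and since $1+T-\ov t \ge 1$ we get $\eta\|\ov X\|_\infty \le 3C_0$; adding this to $\|\frac{\ov X-\ov Y}{\eps}\|_\infty \le 2C_0$ yields \eqref{penalization.bounds} with $C = 5C_0$ (say). The final bound $\|\ov Y\|_\infty \le C(\eta^{-1}+\eps)$ follows from $\ov Y = \ov X - \eps\cdot\frac{\ov X-\ov Y}{\eps}$ and the triangle inequality. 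The one subtlety to check carefully is that the touching in Lemma \ref{lem.linf.touching} is genuinely a global maximum over all of $\cH$ in the relevant single variable — which holds because $(\ov t,\ov X,\ov Y)$ maximizes $\hat\Phi^N$ over the full product space — and that $\hat U(\ov t,\cdot)$ and $\hat V^N(\ov t,\cdot)$ are indeed $L^1$-Lipschitz, which reduces to the elementary inequality $\bd_1(\cL(X),\cL(X')) \le \E|X-X'| = \|X-X'\|_1$.
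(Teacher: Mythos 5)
Your proposal is correct and follows essentially the same route as the paper: existence via a maximizing sequence, boundedness, and lower semicontinuity of $\bd_2^2$ and $M_2$; the transfer to $\hat\Phi^N$ via the inequality $\|X-Y\|_2^2 \ge \bd_2^2(\cL(X),\cL(Y))$ with equality at the optimal coupling; and the $L^\infty$ bounds by two applications of Lemma \ref{lem.linf.touching}, one in each variable, exactly as in the paper. Your treatment of the coercivity needed for the existence step is somewhat more detailed than the paper's (which simply asserts boundedness of the maximizing sequence), but the substance is identical.
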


\begin{proof}
    Let $(t_j, m_j, n_j)_{j \in \N}$ be a maximizing sequence for $\Phi_{\eps, \eta,\lambda, \gamma}^N$. Clearly, $m_j$ and $n_j$ are both bounded in $\bd_2$. It follows that there exists $\ov{t}, \ov{m}, \ov{n}$ such that (along a subsequence, which we relabel as the original sequence), 
    \begin{align*}
        t_j \to \ov{t}, \quad \bd_1(m_j, \ov{m}) \to 0, \quad \bd_1(n_j, \ov{n}) \to 0. 
    \end{align*}
    We now claim that $(\ov{t}, \ov{m}, \ov{n})$ is a maximum point of $\Phi^N$. Indeed, let $\gamma_j \in \cP_2(\R^d \times \R^d)$ be an optimal coupling of $m_j$ and $n_j$. Then $\gamma_j$ is bounded in $\cP_2(\R^d \times \R^d)$, and so converges in $\bd_1$ (along a subsequence which we again relabel) to $\ov{\gamma}$, which is necessarily a coupling of $\ov{m}$ and $\ov{n}$, and so 
    \begin{align*}
        \bd_2(\ov{m},\ov{n}) \leq \int_{\R^d \times \R^d} |x-y|^2 \ov{\gamma}(dx,dy) \leq \liminf_{j \to \infty} \int_{\R^d \times \R^d} |x-y|^2 \gamma_j(dx,dy) = \liminf_{j \to \infty} \bd_2^2(m_j, n_j). 
    \end{align*}
    Similarly, we have 
    \begin{align*}
        M_2(\ov{m}) \leq \liminf_{j \to \infty} M_2(m_j).
    \end{align*}
    Together with the $\bd_1$-continuity of $U$ and $\wt{V}$, this is enough to conclude that $(\ov{t}, \ov{m}, \ov{n})$ is a maximum point, as desired. 

    Now let $(\ov{X}, \ov{Y})$ be an optimal coupling of $\ov{m}$ and $\ov{n}$. The fact that $(\ov{t}, \ov{X}, \ov{Y})$ is a maximum point of $\hat{\Phi}^N$ is an easy consequence of the definition of $\bd_2$. Finally, for the bound \eqref{penalization.bounds}, we use Lemma \ref{lem.linf.touching} and the Lipschitz continuity of both $\hat{U}$ and $\hat{V}^N$ to deduce that
    \begin{align*}
       \norm{ \frac{\ov{X} - \ov{Y}}{\eps} }_{\infty} \leq \text{Lip}(\hat{V}; L^1) \leq \text{Lip}(\wt{V}; \bd_1), 
    \end{align*}
    and likewise 
    \begin{align*}
        \norm{ \frac{\ov{X} - \ov{Y}}{\eps} + \eta(1 + T - \ov{t}) \ov{X}}_{\infty} \leq \text{Lip}(\hat{U}; L^1) \leq \text{Lip}(U; \bd_1),
    \end{align*}
    which together give the desired bound. 
\end{proof}

Throughout the rest of the section, we use $(\ov{t}, \ov{X}, \ov{Y})$ to denote some maximum pint of $\hat{\Phi}^N$, and $C$ indicates a constant which can change from line to line but which does not depend on $\eps, \eta, \lambda, \gamma$, or $N$. 
Our strategy is to bound $\Phi^N_{\eps, \eta, \lambda, \gamma}$ (or equivalently $\hat{\Phi}_{\eps, \eta, \lambda, \gamma}^N$) from above in terms of the parameters $\eps, \eta, \lambda, \gamma$. This will then give rise to the desired error estimates, because by design, we have 
\begin{align} \label{startingest}
    U(t,m_{\bx}^N) - V^N(t,\bx) \lesssim \max_{t,X,Y} \hat{\Phi}_{\eps, \eta, \lambda,\gamma}^N + \eta M_2(m_{\bx}^N) + \lambda + \gamma t^{-1}.
\end{align}

\subsubsection{Small time estimate} 

The main strategy for establishing the upper bound on $\Phi^N$ is to deduce that the extreme point $\ov t$ must lie sufficiently close to $T$, in a quantitative manner depending on $N$. This is done by appropriately choosing the various parameters and using the relationship between the equations satisfied by $U$ and $V^N$, the details for doing so being laid out in the forthcoming subsections.

Once the upper bound on $T - \ov t$ is concluded, we then may invoke the following cruder estimates that simply use the fact that $U(T,m^N_\bx) = V^N(T,\bx)$ for all $N$ and $\bx \in (\Rd)^N$.

\begin{lem}\label{lem:doubled_small_time}
    There exists a constant $C > 0$ depending only on $C_H$ and $C_G$ such that, for any $\eps > 0$, $N \in \NN$,  and $t \in [0,T]$,
    \[
        \sup_{(\mu,\bx) \in \mcl P_2 \times (\R^d)^N} \left\{ U(t, \mu) - V^N(t, \bx) - \frac{1}{2\eps} \mbf d_2^2(\mu, \mu^N_\bx) \right\} 
        \le C(T-t) + \frac{C_G^2\eps}{2}.
    \]
\end{lem}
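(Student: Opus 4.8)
\emph{Approach.} I would run a doubling-of-variables argument on $(0,T]\times\mcl P_2\times(\R^d)^N$ in the spirit of Lemma~\ref{lem.fistpenalization} and Proposition~\ref{prop.1d.lambdabound}, but much cruder: because the Hamiltonian has been truncated at the start of this section, one has the \emph{uniform} bound $|H(x,p,m)|\le C_R:=C_H(1+R^2)$ for all $x,p,m$, and $R$ (hence $C_R$) depends only on $C_H$ and $C_G$ through the Lipschitz constants of $U$ and $V^N$. This boundedness trivializes all Hamiltonian estimates, so no quantization input is needed. Fix $t\in[0,T]$; the case $t=T$ is immediate from $U(T,\cdot)=G$, $V^N(T,\cdot)=G(m^N_\cdot)$ and $C_Gr-\tfrac{r^2}{2\eps}\le\tfrac{C_G^2\eps}{2}$, so assume $t<T$.

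\emph{Penalization and lifting.} For parameters $\eps,\eta,\gamma>0$ and $\lambda>0$ (to be fixed) set
\[
  \Psi(s,m,\bx)=U(s,m)-V^N(s,\bx)-\tfrac1{2\eps}\bd_2^2(m,m^N_\bx)-\tfrac\eta2(1+T-s)M_2(m^N_\bx)-\lambda(T-s)-\gamma\Big(\tfrac1s-\tfrac1T\Big)
\]
on $(0,T]\times\mcl P_2\times(\R^d)^N$. Exactly as in Lemma~\ref{lem.fistpenalization} (using that $\bd_2$-bounded sets are $\bd_1$-compact, the $\bd_1$-continuity of $U$ and $V^N$, the lower semicontinuity of $M_2$ and $\bd_2$, the reverse triangle inequality $\bd_2(m,m^N_\bx)\ge|M_2(m)^{1/2}-M_2(m^N_\bx)^{1/2}|$ for coercivity, and the $\gamma$-term keeping $s$ away from $0$), $\Psi$ attains a maximum at some $(\ov s,\ov m,\ov\bx)$ with $\ov s\in(0,T]$. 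Realizing an optimal coupling of $\ov m$ and $m^N_{\ov\bx}$ on the regular partition $\Pi_N^{\reg}$ exactly as in the proof of Lemma~\ref{lem.1dquant}, one gets $\ov X\in\mcl H$ with $\mcl L(\ov X)=\ov m$, $\ov Y:=Y^N_{\ov\bx,\Pi_N^{\reg}}$, $\|\ov X-\ov Y\|_2=\bd_2(\ov m,m^N_{\ov\bx})$, and then $(\ov s,\ov X,\ov\bx)$ is a maximum point of the lifted functional
\[
  \hat\Psi(s,X,\bx)=\hat U(s,X)-V^N(s,\bx)-\tfrac1{2\eps}\|X-Y^N_{\bx,\Pi_N^{\reg}}\|_2^2-\tfrac\eta2(1+T-s)\|Y^N_{\bx,\Pi_N^{\reg}}\|_2^2-\lambda(T-s)-\gamma\Big(\tfrac1s-\tfrac1T\Big).
\]

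\emph{The Crandall--Ishii step.} If $\ov s<T$, I would apply Lemma~\ref{lem.cks} with $H_1=\mcl H$, $Z_1=\mcl H_0:=\mathrm{span}\{e_1,\dots,e_d\}$ (the constant random vectors), $H_2=Z_2=(\R^d)^N$, $U_1=\hat U$, $U_2=-V^N$, and $\mcl A$ the Hessian of the penalty in $(X,\bx)$, obtaining jets $(a_1,p_1,A_1')\in\ov J^{2,+}\hat U(\ov s,\ov X)$ and $(a_2,p_2,A_2)\in\ov J^{2,-}V^N(\ov s,\ov\bx)$ with $A_1'=A_1$ on $\mcl H_0$ (as $Q_1$ vanishes there), $A_1=P_{\mcl H_0}A_1P_{\mcl H_0}$, $a_1-a_2=-\lambda-\tfrac\eta2 M_2(m^N_{\ov\bx})-\gamma\ov s^{-2}$, and the usual matrix inequality for $\mathrm{diag}(A_1,-A_2)$. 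Inserting these into the subsolution inequality for $\hat U$ and (via Lemma~\ref{lem.projection.deriv}) the supersolution inequality for $V^N$, and subtracting, the key points are: (i) $e_k\in\mcl H_0$, so $Q_1e_k=0$ and the $Q_1$-term disappears from $\hat U$'s common-noise term; (ii) with $\mbf e_k=(e_k,\dots,e_k)\in(\R^d)^N$ one has $Y^N_{\mbf e_k}=e_k$ and $(\bm Q^N)^*e_k=N^{-1}\mbf e_k$, so the coupling Hessian annihilates $(e_k,\mbf e_k)$ and the matrix inequality forces $\sum_kA_1[e_k,e_k]-\sum_{i,j}\tr((A_2)_{i,j})=\sum_k\big(A_1[e_k,e_k]-A_2[\mbf e_k,\mbf e_k]\big)\le o_\eta(1)$, reconciling the two common-noise operators with a (nearly) favourable sign exactly as in the $d=1$ computation at the end of the proof of Proposition~\ref{prop.1d.lambdabound}, the $o_\eta(1)$ stemming only from the Hessian of the $\eta$-penalty; and (iii) $|\hat H|\le C_R$ and $|H|\le C_R$ everywhere. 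Combining, $\lambda\le-(a_1-a_2)\le 2C_R+\kappa\,o_\eta(1)$.

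\emph{Conclusion.} Choosing $\lambda:=2C_R+1$ therefore forces $\ov s=T$ once $\eta$ is small enough (depending on $\kappa$ but not on $\eps,\gamma,N$), and then $\max\hat\Psi=\hat\Psi(T,\ov X,\ov\bx)\le G(\ov m)-G(m^N_{\ov\bx})-\tfrac1{2\eps}\bd_2^2(\ov m,m^N_{\ov\bx})\le C_G\bd_2(\ov m,m^N_{\ov\bx})-\tfrac1{2\eps}\bd_2^2(\ov m,m^N_{\ov\bx})\le\tfrac{C_G^2\eps}{2}$ (using $\bd_1\le\bd_2$ and discarding the nonnegative $\eta$-term). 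Hence $\hat\Psi\le\tfrac{C_G^2\eps}{2}$ on all of $(0,T]\times\mcl H\times(\R^d)^N$; evaluating at a fixed $(t,X,\bx)$, sending $\gamma\downarrow0$ and then $\eta\downarrow0$, and taking the supremum over $X$ — for fixed $\bx$ and the simple function $Y^N_{\bx,\Pi_N^{\reg}}$ one has $\inf_{\mcl L(X)=m}\|X-Y^N_{\bx,\Pi_N^{\reg}}\|_2^2=\bd_2^2(m,m^N_\bx)$ — yields $U(t,m)-V^N(t,\bx)-\tfrac1{2\eps}\bd_2^2(m,m^N_\bx)\le\tfrac{C_G^2\eps}{2}+(2C_R+1)(T-t)$ for all $(m,\bx)$, i.e.\ the claim with $C=2C_R+1$. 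The main obstacle is the Crandall--Ishii step, and within it the verification that the Hilbert-space common-noise operator $\kappa\sum_kD^2\hat U[e_k,e_k]$ and the particle common-noise operator $\kappa\sum_{i,j}\tr(D_{x^ix^j}V^N)$ are reconciled through $(e_k,\mbf e_k)$ lying (up to the $\eta$-penalty) in the kernel of the coupling Hessian; the rest — the compactness yielding $(\ov s,\ov m,\ov\bx)$ and the Hamiltonian bounds — is routine, the latter being trivial once the truncation is in force.
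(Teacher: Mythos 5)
Your proof is correct in outline, but it takes a genuinely different route from the paper. The paper's argument is a short comparison with explicit heat-flow solutions: it introduces $V^{N,\circ}$ and $U^{\circ}$ solving the drift-free (pure common noise) equations with terminal datum $G$, observes that these project exactly onto one another, $V^{N,\circ}(t,\bx) = U^{\circ}(t,m^N_{\bx})$, uses the boundedness of the truncated Hamiltonian and the comparison principle to get $V^N \ge V^{N,\circ} - C(T-t)$ and $U \le U^{\circ} + C(T-t)$, and then bounds the resulting sup-convolution $\sup_{\mu,\nu}\{U^{\circ}(t,\mu)-U^{\circ}(t,\nu)-\tfrac{1}{2\eps}\bd_2^2(\mu,\nu)\}$ by $\tfrac{C_G^2\eps}{2}$ using the translation invariance of $\bd_2$ under the common shift $x \mapsto x + B_{T-t}$ and the Lipschitz continuity of $G$ --- no Crandall--Ishii machinery at all. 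You instead rerun the full doubling-of-variables argument, and your key observation --- that once $H$ is truncated the Hamiltonian discrepancy is bounded by $2C_R$ outright, so the quantization estimates that obstruct the direct doubling in $d\ge 2$ are simply not needed for an $O(T-t)+O(\eps)$ bound --- is sound; the reconciliation of the two common-noise operators through $(e_k,\mbf e_k)\in\ker\cA_N$ is exactly the mechanism of Proposition \ref{prop.1d.lambdabound} and carries over verbatim to $d\ge 2$ with $Z_1=\operatorname{span}\{e_1,\dots,e_d\}$ and $Z_2=(\R^d)^N$. Two minor points: the realization of an optimal coupling of $\ov m$ and $m^N_{\ov\bx}$ against the fixed simple function $Y^N_{\ov\bx,\Pi_N^{\reg}}$ in dimension $d\ge2$ should be justified by the non-atomicity/disintegration argument used in the proof of Lemma \ref{lem.hatvn.cks}, not by Lemma \ref{lem.1dquant}, which is specific to $d=1$; and the threshold for ``$\eta$ small enough'' may also depend on $\eps$ through the $\cB^2$ cross-terms, which is harmless since $\eps$ is fixed before $\eta\downarrow 0$. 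Overall the paper's route is much shorter and keeps this auxiliary lemma logically independent of the main doubling argument it feeds into, while yours is self-contained within the viscosity framework and avoids the explicit semigroup formulas and the exact-projection observation, at the cost of considerably more machinery.
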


\begin{proof}
    We introduce the viscosity solution $V^{N,\circ}:[0,T] \times (\R^d)^N \rightarrow \R$ to 
\[
    - \partial_t V^{N, \circ} - \kappa \sum_{i,j=1}^N \tr(D_{x^{i}x^{j}}V^{N,\circ})  = 0, \quad (t,\bx) \in [0,T) \times (\R^d)^N, \quad V^{N,\circ}(T,\bx) = G(m^N_{\bx}), 
\]
which has the explicit solution 
\begin{align} \label{vno.explicit}
    V^{N,\circ}(t,\bx) = \E\Big[ G\big( \frac{1}{N} \sum_{i = 1}^N \delta_{x^i + B_{T-t}} \big) \Big] =  \E\Big[ G\Big( (\text{Id} + B_{T-t})_{\#} m_{\bx}^N\Big) \Big], 
\end{align}
with $B$ a $d$-dimensional Brownian motion.
We also introduce the $L$-viscosity solution $U^{\circ} :[0,T] \times \mathcal{P}_2(\R^d) \rightarrow \R$ to
\[
    -\partial_t U^{\circ} - \kappa \Delta_{w}U^{\circ} = 0 \quad (t,m) \in [0,T] \times \mathcal{P}_2(\R^d), \quad U^{{\circ}}(T,m) = G(m),
\]
whose explicit solution is 
\begin{align} \label{uo.explicit}
    U(t,m) = \E\Big[ G\Big( (\text{Id} + B_{T-t})_{\#} m \Big)  \Big].
\end{align}
By Proposition \ref{prop.uniformlip}, and the comparison principle, if $C$ is sufficiently large depending only on the constants in Assumption \eqref{assump.main}, then
\[
    V^N \geq V^{N,\circ} - C(T-t) \quad \text{and} \quad U \leq U^\circ + C(T-t).
\]
Note that from the explicit formulas \eqref{vno.explicit} and \eqref{uo.explicit}, we have 
\begin{align*}
    V^{N,o}(t,\bx) = U^o(t,m_{\bx}^N).
\end{align*}
It follows that 
\begin{align*}
     \sup_{(\mu, \bx) \in \mcl P_2 \times (\R^d)^N}
    &\left\{ U^\circ(t,\mu) - V^{N,\circ}(t,\bx) - \frac{1}{2\eps} \mbf d_2^2(\mu, \mu^N_\bx) \right\}
    \\
    &\leq \sup_{\mu,\nu \in \mcl P_2} \left\{ U^\circ(t,\mu) - U^\circ(t,\nu) - \frac{1}{2\eps} \mbf d_2^2(\mu,\nu) \right\}
    \\
    &= \sup_{\mu,\nu} \Big\{ \E\Big[ G\Big( (\text{Id} + B_{T-t})_{\#} \mu \Big)  - G\Big((\text{Id} + B_{T-t})_{\#} \nu \Big) \Big] - \frac{1}{2\eps} \bd_2^2(\mu,\nu) \Big\}
    \\
    &= \sup_{\mu,\nu} \Big\{ \E\Big[ G\Big( (\text{Id} + B_{T-t})_{\#} \mu \Big)  - G\Big((\text{Id} + B_{T-t})_{\#} \nu \Big) 
    \\
    &\qquad \qquad - \frac{1}{2\eps} \bd_2^2\Big( \big(\text{Id} + B_{T-t}\big)_{\#}\mu, \big(\text{Id} + B_{T-t}\big)_{\#} \nu \Big) \Big] \Big\}
    \\
    &\leq \sup_{\mu,\nu} \Big\{ G(\mu) - G(\nu) - \frac{1}{2\eps} \bd_2^2(\mu,\nu) \Big\} \leq \frac{C_G^2 \eps}{2}, 
\end{align*}
where we used the fact that the for $z \in \R^d$, $\mu,\nu \in \cP_2(\R^d)$, we have $\bd_2\big( (\text{Id} + z)_{\#} \mu, (\text{Id} + z)_{\#} \nu\big) = \bd_2(\mu,\nu)$, and in the last line we used the Lipschitz continuity of $G$. 
\end{proof}

We then have the following corollary regarding the upper bound for $\Phi^N$ (equivalently $\hat \Phi^N$) in terms of the size of $T - \ov t$.

\begin{lem} \label{lem.shorttime.higherdim}
    With $r_{N,d}$ as in \eqref{def.rnd}, we have
   \begin{align*}
      \max_{m,n} \Phi_{\eps, \eta, \lambda, \gamma}^N(t,m,n) = \max_{X,Y} \hat{\Phi}_{\eps, \eta, \lambda, \gamma}^N(t,X,Y) \lesssim \big(T - \ov{t} \big) + \eps + (1 + \eta^{-1} + \eps^{-1} \eta^{-1}) r_{N,d}.
   \end{align*}
\end{lem}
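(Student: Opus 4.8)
The plan is to combine the penalization estimates from Lemma \ref{lem.fistpenalization} with the crude short-time comparison in Lemma \ref{lem:doubled_small_time}, using the quantization rates of Proposition \ref{prop.quantization} to pass between the "lifted" objects $\hat{U},\hat{V}^N$ and the genuinely finite-dimensional quantities $V^N$. First I would fix a maximum point $(\ov t,\ov m,\ov n)$ of $\Phi^N$ (equivalently $(\ov t,\ov X,\ov Y)$ of $\hat \Phi^N$ with $\mcl L(\ov X,\ov Y)$ an optimal coupling of $\ov m,\ov n$), and recall from Lemma \ref{lem.fistpenalization} that $\|\ov X\|_\infty \lesssim 1+\eta^{-1}$ and $\|\ov Y\|_\infty \lesssim 1+\eta^{-1}+\eps$, so in particular $\ov X$ and $\ov Y$ are bounded in $L^\infty$ by a multiple of $1+\eta^{-1}$.

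Next I would replace $\ov m$ by a nearby empirical measure. Applying Proposition \ref{prop.quantization} to $\ov X$, choose a partition $\Pi \in P_N$ with $\rho_N(\ov X,\Pi)=\|\ov X-\EE[\ov X\mid\mcl F_\Pi]\|_2\lesssim \|\ov X\|_\infty r_{N,d}\lesssim (1+\eta^{-1})r_{N,d}$. Set $\ov{\bx}$ to be the (reordered) values of $\EE[\ov X\mid \mcl F_\Pi]$ on the atoms of $\Pi$, so that $\bd_2(\ov m, m^N_{\ov{\bx}})\le \rho_N(\ov X,\Pi)\lesssim(1+\eta^{-1})r_{N,d}$, and crucially $\tilde V^N(\ov t, m^N_{\ov\bx})=V^N(\ov t,\ov\bx)$. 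Now evaluate $\Phi^N$ at $(\ov t,\ov m,\ov n)$ and compare to its value at $(\ov t, m^N_{\ov\bx}, \cdot)$: since $(\ov t,\ov m,\ov n)$ is the maximizer,
\begin{align*}
\max \Phi^N = \Phi^N(\ov t,\ov m,\ov n) \geq U(\ov t, m^N_{\ov\bx}) - \text{(terms evaluated at } m^N_{\ov\bx}),
\end{align*}
but more usefully I would go the other direction: bound $\max\Phi^N = \Phi^N(\ov t,\ov m,\ov n)$ from above by writing $U(\ov t,\ov m) = U(\ov t, m^N_{\ov\bx}) + [U(\ov t,\ov m)-U(\ov t,m^N_{\ov\bx})]$, using the $\bd_1$-Lipschitz bound for $U$ from Corollary \ref{cor.Ulip} (and $\bd_1\le\bd_2$) to absorb the difference into $(1+\eta^{-1})r_{N,d}$, and similarly noting $\tilde V^N(\ov t,\ov n)\ge \tilde V^N(\ov t, m^N_{\ov\bx}) - 2C_0\bd_1(\ov n, m^N_{\ov\bx})$, where $\bd_1(\ov n, m^N_{\ov\bx})\le \bd_1(\ov n,\ov m)+\bd_1(\ov m, m^N_{\ov\bx}) \lesssim \bd_2(\ov m,\ov n) + (1+\eta^{-1})r_{N,d}$; the term $\eps^{-1}\bd_2^2(\ov m,\ov n)/2$ in $\Phi^N$ controls the extra $\eps^{-1}\bd_2(\ov m,\ov n)$ produced here up to an additive $O(\eps)$ by Young's inequality. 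Dropping the (nonnegative) moment, $\lambda$, and $\gamma$ penalization terms, this reduces $\max\Phi^N$ to roughly $U(\ov t, m^N_{\ov\bx}) - V^N(\ov t,\ov\bx) - \frac{1}{2\eps}\bd_2^2(m^N_{\ov\bx}, m^N_{\ov\bx}) + O(\eps) + O((1+\eta^{-1}+\eps^{-1}\eta^{-1})r_{N,d})$, where the last error collects the $\eta^{-1}r_{N,d}$ and $\eps^{-1}\eta^{-1}r_{N,d}$ contributions (the latter from the Young step applied to $\eps^{-1}$ times a $\bd_1$-distance of size $\eta^{-1}r_{N,d}$).

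Finally I would invoke Lemma \ref{lem:doubled_small_time} at time $\ov t$: the quantity $U(\ov t, m^N_{\ov\bx}) - V^N(\ov t,\ov\bx) - \frac{1}{2\eps}\bd_2^2(m^N_{\ov\bx},m^N_{\ov\bx})$ is bounded above by $\sup_{(\mu,\bx)}\{U(\ov t,\mu)-V^N(\ov t,\bx)-\frac{1}{2\eps}\bd_2^2(\mu,\mu^N_\bx)\} \le C(T-\ov t) + \frac{C_G^2\eps}{2}$. Combining everything gives $\max\Phi^N \lesssim (T-\ov t) + \eps + (1+\eta^{-1}+\eps^{-1}\eta^{-1})r_{N,d}$, which is the claim. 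The main obstacle I anticipate is bookkeeping the replacement of $\ov m$ by $m^N_{\ov\bx}$ carefully: one must make sure the $\eps^{-1}$-weighted Wasserstein penalization is used to absorb the quantization error in $\bd_2$ rather than $\bd_1$ (so that one only pays $\eps^{-1}\eta^{-1}r_{N,d}$ and not $\eps^{-1}\eta^{-2}r_{N,d}^2$, which would be too large), and that the $\tilde V^N$ versus $V^N$ identity is used at exactly the quantized configuration $\ov\bx$; the rest is routine application of the stated Lipschitz bounds and Young's inequality.
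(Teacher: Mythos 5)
Your proof is correct, but it follows a genuinely different route from the paper's. The paper quantizes $\ov Y$ (choosing $\Pi$ so that $\|\ov Y - \E[\ov Y\mid\cF_\Pi]\|_2 \lesssim (1+\eta^{-1})r_{N,d}$), replaces $\ov Y$ by $\E[\ov Y\mid\cF_\Pi]$ inside both $\hat V^N$ and the quadratic penalization, and then applies Lemma \ref{lem:doubled_small_time} to the pair $(\ov m, \ov\bx)$ with $\ov m=\cL(\ov X)$ still a general measure — so the $\frac{1}{2\eps}\bd_2^2(\mu,\mu^N_\bx)$ term inside that lemma is genuinely used, and the manipulation of $\frac{1}{2\eps}\|\ov X-\E[\ov Y\mid\cF_\Pi]\|_2^2-\frac{1}{2\eps}\|\ov X-\ov Y\|_2^2$ is where the $\eps^{-1}\eta^{-1}r_{N,d}$ contribution arises. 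You instead quantize $\ov X$, push everything onto the single empirical configuration $m^N_{\ov\bx}$, absorb the cross term $\bd_2(\ov m,\ov n)$ by Young's inequality against the doubling penalization $\frac{1}{2\eps}\bd_2^2(\ov m,\ov n)$, and invoke Lemma \ref{lem:doubled_small_time} only in the degenerate diagonal case $\mu=\mu^N_{\bx}$. Your bookkeeping checks out: the Lipschitz constants, the bound $\bd_2(\ov m,m^N_{\ov\bx})\le\rho_N(\ov X,\Pi)\lesssim(1+\eta^{-1})r_{N,d}$, the identity $\wt V^N(\ov t,m^N_{\ov\bx})=V^N(\ov t,\ov\bx)$, and the Young step $2C_0 r-\frac{1}{2\eps}r^2\le 2C_0^2\eps$ are all used legitimately. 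In fact your route yields the slightly cleaner bound $(T-\ov t)+\eps+(1+\eta^{-1})r_{N,d}$; the $\eps^{-1}\eta^{-1}r_{N,d}$ term you append at the end is not actually produced by your argument (including it is harmless since it only weakens an upper bound, and it matches the lemma as stated). The trade-off is that your version relies on the $L^\infty$ bound for $\ov X$ rather than $\ov Y$ — both are available from Lemma \ref{lem.fistpenalization} — and it uses less of Lemma \ref{lem:doubled_small_time} than the paper does.
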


\begin{proof}
    We have 
    \begin{align*}
        \max_{t,m,n} \hat{\Phi}_{\eps, \eta, \lambda, \gamma}^N = \hat{U}(\ov{t}, \ov{X}) - \hat{V}^N(\ov{t}, \ov{Y}) - \frac{1}{2\eps} \| \ov{X} - \ov{Y} \|_2^2 - \frac{\eta}{2} (1 + T - \ov{t}) \|\ov{X}\|_2^2 - \lambda(T - \ov{t}) - \gamma \Big( \frac{1}{ \ov t} - \frac{1}{T} \Big).
    \end{align*}
    By Proposition \ref{prop.quantization} and Lemma \ref{lem.fistpenalization}, we can find $\Pi \in P_N$ such that
    \begin{align*}
        &\|\ov{Y} - \E[\ov{Y} | \cF_{\Pi}]\|_1 \leq   \|\ov{Y} - \E[\ov{Y} | \cF_{\Pi}]\|_2 \lesssim \|\ov{Y}\|_{\infty} r_{N,d} \lesssim (1 + \eta^{-1}) r_{N,d}.
    \end{align*}
    We can then use the Lipschitz regularity of $\hat{V}$, and the fact that 
    \begin{align*}
       \norm{\ov{X}}_{\infty},  \norm{\ov{Y}}_{\infty}, \norm{\E[\ov{Y} | \cF_{\Pi}]}_{\infty} \lesssim 1 + \eta^{-1}, 
    \end{align*}
    to get
    \begin{align*}
        \hat{\Phi}_{\eps, \eta, \lambda,\gamma}^N(\ov t, \ov X, \ov Y) &\leq \hat{U}(\ov{t}, \ov X ) - \hat{V}^N(\ov{t}, \E[\ov{Y} | \cF_{\Pi}]) - \frac{1}{2\eps}   \| \ov X - \E[ \ov{Y} | \cF_{\Pi} ] \|_2^2
        \\
        &\qquad + \hat{V}^N(\ov t, \E[ \ov Y | \cF_{\Pi}]) - \hat{V}^N(\ov{t}, \ov Y) + \frac{1}{2\eps}   \| \ov X - \E[ \ov{Y} | \cF_{\Pi} ] \|_2^2 - \frac{1}{2\eps}   \| \ov X - \ov Y \|_2^2
        \\
        &\leq \hat{U}(\ov t, \ov X) - V^N(\ov t, \ov \by) - \frac{1}{2\eps} \bd_2^2( \ov{m}, \ov{\by}) + C (1 + \eta^{-1} + \eta^{-1} \eps^{-1}) r_{N,d}
    \end{align*}
    where 
    \begin{align*}
        \ov{m} = \cL( \ov{\bX} ), \quad \ov{x}^i = \E[ \ov{Y} | \Pi^i]. 
    \end{align*}
    Applying Lemma \ref{lem:doubled_small_time} completes the proof. 
\end{proof}

\subsubsection{Pushing extrema onto discrete random variables} Having extended $V^N$ to the full space of measures, we next introduce a penalization that pushes the extreme point $\ov Y$ to a nearby discrete random variable, allowing the viscosity solution definition to be applied to $V^N$.

We start with a technical result which will be needed in the application of the particular parabolic Crandall-Ishii lemma used in the proof of Proposition \ref{prop.1d.lambdabound} below. 

\begin{lem} \label{lem.hatvn.cks}
    The function $ - \hat{V}^N$, defined in \eqref{eq:defnVhatNUhat17/09}, satisfies the condition \eqref{BT} from Lemma \ref{lem.cks}. 
\end{lem}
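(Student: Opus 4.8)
The plan is to deduce \eqref{BT} for $-\hat V^N$ directly from the fact that $V^N$ is a viscosity supersolution of \eqref{hjb.n}, using crucially that after the reduction in Section \ref{sec.main} the (relabelled) Hamiltonian $H$ is not only globally Lipschitz but also bounded: indeed $|H^R(x,p,m)| \le C_H(1+R^2)$ by \eqref{ass:quadraticgrowthH} and the definition of the truncation, so $|H| \le \Lambda_H$ for a constant $\Lambda_H$ depending only on $R$ and $C_H$. Unwinding definitions, \eqref{BT} for $-\hat V^N$ is equivalent to: for each $M$ there is $C(M)$ such that if $(b,q,\cB) \in J^{2,-}\hat V^N(t_0,X_0)$ with $t_0 < T$ and $\|q\|_2 + \|\cB\|_{\op} \le M$, then $b \le C(M)$. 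I record two preliminary observations. First, since $\wt V^N(t,\cdot)$ is $2C_0$-Lipschitz for $\bd_1$ uniformly in $t$ (by \eqref{def.tildeVn} and Proposition \ref{prop.uniformlip}), $\hat V^N(t,\cdot)$ is $2C_0$-Lipschitz for $\|\cdot\|_1$, and testing the sub-jet inequality against increments $s\,v\,\ind_A$ (exactly as in the proof of Lemma \ref{lem.linf.touching}) gives the pointwise bound $\|q\|_\infty \le 2C_0$. Second, for every $\bx \in (\R^d)^N$, every $\Pi \in P_N$ and every $X \in \cH$ one has $\hat V^N(t,X) \le V^N(t,\bx) + 2C_0\|Y^N_{\bx,\Pi} - X\|_1$, because $\cL(Y^N_{\bx,\Pi}) = m^N_\bx$ and $\wt V^N(t,m^N_\bx) = V^N(t,\bx)$.

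Given such a sub-jet $(b,q,\cB) \in J^{2,-}\hat V^N(t_0,X_0)$ with $t_0 < T$, I would choose $\bx_0$ attaining the infimum in \eqref{def.tildeVn}, so that $\hat V^N(t_0,X_0) = V^N(t_0,\bx_0) + 2C_0\bd_1(m^N_{\bx_0},\cL(X_0))$, and a partition $\Pi_0 = (\Omega^i_0)_{i=1}^N \in P_N$ for which $Y_0 := Y^N_{\bx_0,\Pi_0}$ realizes an optimal $\bd_1$-coupling of $m^N_{\bx_0}$ with $\cL(X_0)$, i.e. $\|Y_0 - X_0\|_1 = \bd_1(m^N_{\bx_0},\cL(X_0))$; such a $\Pi_0$ exists because on the atomless space $\Omega = [0,1]$ any coupling of $m^N_{\bx_0}$ with $\cL(X_0)$ with second marginal $X_0$ is realized by a random variable taking finitely many values, hence is of the form $Y^N_{\bx_0,\Pi}$. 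Set $D_0 := X_0 - Y_0 \in \cH$.

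The key step is to combine the sub-jet expansion of $\hat V^N$ at $(t_0,X_0)$ with $\hat V^N(t,X) \le V^N(t,\bx) + 2C_0\|Y^N_{\bx,\Pi_0} - X\|_1$ and then substitute $X = Y^N_{\bx,\Pi_0} + D_0$. With this substitution the penalty $\|Y^N_{\bx,\Pi_0} - X\|_1 = \|D_0\|_1$ no longer depends on $\bx$, while $Y^N_{\bx,\Pi_0} + D_0 - X_0 = \sum_i(x^i-x_0^i)\ind_{\Omega^i_0}$ and $\hat V^N(t_0,X_0) - 2C_0\|D_0\|_1 = V^N(t_0,\bx_0)$; since also $\|\sum_i(x^i-x_0^i)\ind_{\Omega^i_0}\|_2^2 = \tfrac1N|\bx-\bx_0|^2$, the inequality becomes, as $(t,\bx)\to(t_0,\bx_0)$,
\[
    V^N(t,\bx) \ge V^N(t_0,\bx_0) + b(t-t_0) + \sum_{i=1}^N p^i\cdot(x^i-x_0^i) + \tfrac12\sum_{i,j=1}^N (x^i-x_0^i)\cdot B^{ij}(x^j-x_0^j) + o\big(|t-t_0| + |\bx-\bx_0|^2\big),
\]
where $p^i := \E[q\,\ind_{\Omega^i_0}]$ and $B^{ij} \in \R^{d\times d}$ has entries $(B^{ij})_{qr} := \cB[e_q\ind_{\Omega^i_0},\,e_r\ind_{\Omega^j_0}]$, exactly as in Lemma \ref{lem.projection.deriv}. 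Thus $\big(b,(p^i)_i,(B^{ij})_{i,j}\big) \in J^{2,-}V^N(t_0,\bx_0)$. Two bounds are immediate: $|Np^i| = N|\E[q\ind_{\Omega^i_0}]| \le \|q\|_\infty \le 2C_0$, and $\sum_{i,j}\tr(B^{ij}) = \sum_{k=1}^d \cB[e_k,e_k]$ (using $\sum_i\ind_{\Omega^i_0} = 1$), so $\big|\sum_{i,j}\tr(B^{ij})\big| \le d\|\cB\|_{\op} \le dM$. Applying the viscosity supersolution inequality for \eqref{hjb.n} at $(t_0,\bx_0)$, legitimate since $t_0 < T$, gives
\[
    -b - \kappa\sum_{i,j=1}^N \tr(B^{ij}) + \frac1N\sum_{i=1}^N H\big(x_0^i, Np^i, m^N_{\bx_0}\big) \ge 0,
\]
whence $b \le \kappa dM + \Lambda_H =: C(M)$, a bound depending only on $M$ and the fixed data, which is exactly \eqref{BT}.

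The one point requiring genuine care is that $X_0$ is an arbitrary element of $\cH$, not a discrete random variable, so $V^N$ is not accessible at $X_0$ itself; this is precisely what the translation by the fixed vector $D_0 = X_0 - Y_0$ handles, by freezing the $\bd_1$-penalty along the finite-dimensional direction $\bx \mapsto Y^N_{\bx,\Pi_0}$ and thereby letting the sub-jet of $\hat V^N$ at $X_0$ descend — with its linear and quadratic coefficients controlled through Lemma \ref{lem.projection.deriv} — to an honest sub-jet of $V^N$ at the inf-convolution minimizer $\bx_0$. The other essential ingredient is the boundedness (not merely local Lipschitz continuity) of the truncated Hamiltonian, which is what makes $C(M)$ independent of $\bx_0$ as \eqref{BT} demands.
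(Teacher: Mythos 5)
Your proof is correct and follows essentially the same route as the paper's: the central device --- translating by the fixed displacement $D_0 = X_0 - Y^N_{\bx_0,\Pi_0}$ so that the $L^1$ penalty is frozen and the jet of $\hat V^N$ at $X_0$ descends, via the projection formulas of Lemma \ref{lem.projection.deriv}, to a sub-jet of $V^N$ at the inf-convolution minimizer $\bx_0$ --- is exactly the paper's substitution $Y = X - Y^N_{\ov\bx} + Y^N_{\bx}$. The only (immaterial) difference is in the final bound on the Hamiltonian term: you invoke boundedness of the truncated $H^R$ together with the $L^\infty$ bound on $q$, while the paper uses the quadratic growth \eqref{ass:quadraticgrowthH} combined with $\|p\|_2 \le M$, which avoids any appeal to the truncation.
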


\begin{proof}
    First, notice that we can write 
    \begin{align} \label{hatVn.rep}
        \hat{V}^N(t,X) = \inf_{\bx \in (\R^d)^N, \, \Pi \in P_N} \Big\{ V^N(t,\bx) + 2C_0 \E\big[ |X - Y_{\bx, \Pi}^N| \big] \Big\}.
    \end{align}
    Moreover, if $\ov{\bx}$ is a minimizer for the problem appearing in \eqref{def.tildeVn}, then it is straightforward to check that there exists $\ov{\Pi} \in P_N$ such that $(X, Y_{\ov{\bx}, \ov{\Pi}}^N)$ is an optimal coupling of $\cL(X)$ and $m_{\bar{\bx}}^N$, and hence there exists at least one optimizer $(\ov{\bx}, \ov{\Pi})$ for the problem appearing in \eqref{hatVn.rep}. For the remainder of the proof, we fix $\ov{\Pi}$ and write $Y_{\bx}^N = Y_{\bx, \ov{\Pi}}^N$ for simplicity. 
    
    Now suppose that $(t,X,a,p,A) \in (0,T) \times \mcl H \times \RR \times \mcl H \times \sym(\mcl H)$ is such that $(a, p, A) \in J^{2,+}( - \hat{V}^N \big)(t,X) = -J^{2,-} \hat{V}^N(t,X)$. It follows that there exists $\Phi \in C^{1,2}([0,T) \times \cH)$ such that 
    \begin{align*}
        &\partial_t \Phi(t,X) = -a, \quad \nabla \Phi(t,X) = -p, \quad \nabla^2 \Phi(t,X) = -A, 
        \\
        & \hat{V}^N(t,X) - \Phi(t,X) \leq \hat{V}^N(s,Y) - \Phi(s,Y), \quad \forall \,\, (s,Y) \in [0,T) \times \cH. 
    \end{align*}
    Thus for any $s \in [0,T]$, $Y \in \cH$, $\bx \in (\R^d)^N$, we have 
    \begin{align*}
        V^N(t,\ov \bx) &+ 2 C_0 \E\big[ |X - Y_{ \ov \bx}^N| \big] - \Phi(t,X) = \hat{V}^N(t,X) - \Phi(t,X) \leq \hat{V}^N(s,Y) - \Phi(s,Y)  
        \\
        &\leq V^N(s,\bx) + 2C_0 \E\big[ |Y - Y_{\bx}^N| \big] - \Phi(s,Y).
    \end{align*}
    Choosing $Y = X - Y_{\ov \bx}^N + Y_{  \bx}^N$, this becomes 
    \begin{align*}
        V^N(t,\ov \bx) &+ 2 C_0 \E\big[ |X - Y_{ \ov \bx}^N| \big] - \Phi(t,X)\leq V^N(s,\bx) + 2 C_0 \E\big[ |X - Y_{ \ov \bx}^N| \big] - \Phi(s,X - Y_{\ov \bx}^N + Y_{\bx}^N), 
    \end{align*}
    and so, for each $s \in [0,T)$, $\bx \in (\R^d)^N$,
    \begin{align*}
        V^N(t,\ov{\bx}) - \Psi^N(t,\ov \bx) \leq V^N(s,\bx) - \Psi^N(s,\bx)
    \end{align*}
    where $\Psi^N : [0,T) \times (\R^d)^N \to \R$ is defined as $\Psi^N(s,\bx) = \Phi\big(s, X - Y_{\ov \bx}^N + Y_{\bx}^N\big)$. We deduce from the viscosity supersolution property of $V^N$ and Lemma \ref{lem.projection.deriv} that 
    \begin{align*}
       &  a + \kappa \sum_{i = 1}^d A \big[ e_i, e_i ] + \hat{H}\Big( Y_{\ov{\bx}}^N, \E\big[-p | \cF_{\ov{\Pi}} \big] \Big) 
        \\
        &\quad = - \partial_t \Psi^N(t, \ov{\bx}) - \kappa \sum_{i,j = 1}^N \tr\big(D_{x^ix^i} \Psi^N(t,\ov{\bx}) \big) + \frac{1}{N} \sum_{i = 1}^N H\Big( \ov{x}^i, D_{x^i} \Psi^N(t,\ov{\bx}), m_{\ov{\bx}}^N \Big) \geq 0, 
    \end{align*}
    from which it follows, thanks to the growth assumption \eqref{ass:quadraticgrowthH} on $H$, that
    \begin{align*}
        a \geq  - \kappa d \|A\|_{\op} - C_H\big(1 + \|p\|_2^2\big),
    \end{align*}
    and thust the condition \eqref{BT} is satisfied.
\end{proof}

Given parameters $\eps, \eta, \lambda, \gamma > 0$, $N \in \N$, we again let $(\ov{t}, \ov{X}, \ov{Y})$ denote a maximum point of $\hat{\Phi}_{\eps, \eta, \lambda, \gamma}^N$. We also introduce parameters $\beta, \delta > 0$, and a partition $\Pi \in P_N$ of $\Omega$, and stress that the proportionality constants below are independent of these three new parameters as well.

The main goal of this section is to prove the following result, which shows that if $\lambda$ is large, then $\ov{t}$ must be close to $T$. Recall the definition of $\rho_N$ from \eqref{defn:rhoN} in the statement below.

\begin{prop}
    \label{prop.lambabound}  
    There exists a constant $C_0 > 0$ such that, for any $\eps, \eta, \lambda, \gamma, \beta, \delta > 0$, $N \in \N$, maximum point $(\ov{t}, \ov{X}, \ov{Y})$ of $\hat{\Phi}_{\eps, \eta, \lambda, \gamma}^N$, and $\Pi \in P_N$, the following holds: if 
    \begin{align*}
        \lambda \geq C_0 \bigg( \eta + \delta\Big( 1 +  \frac{\kappa}{\beta} \Big) + \eps +  \frac{\beta}{\eps} + \frac{1}{\delta \eps^2} \rho_N^2(\ov Y, \Pi) + \frac{1}{\delta \eps } \rho_N(\ov Y, \Pi) + \rho_N\Big( \frac{\ov{X}- \ov{Y}}{\eps}, \Pi \Big) \bigg),
    \end{align*}
    then 
    \begin{align*}
        T - \ov{t} \leq C_0 \Big( \beta  + \frac{1}{\delta \eps} \rho_N(\ov Y, \Pi)^2 + \frac{1}{\delta} \rho_N(\ov{Y}, \Pi) \Big).
    \end{align*}
\end{prop}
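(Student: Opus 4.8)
The plan is to adapt the doubling-of-variables argument behind Proposition~\ref{prop.1d.lambdabound} (the $d=1$ case) to the higher-dimensional setting, but with the crucial second optimization step sketched in Subsection~\ref{subsec.methods}: since $\hat V^N$ satisfies no equation, after fixing the maximum point $(\ov t,\ov X,\ov Y)$ of $\hat\Phi^N$ and the partition $\Pi$ one replaces $\hat V^N$ by $V^N$ via a second doubling over discrete configurations $\bx\in(\R^d)^N$ adapted to $\Pi$. If $\ov t=T$ there is nothing to prove, so assume $\ov t<T$. Choosing a smooth $h:[0,\infty)\to[0,\infty)$ with $h(0)=0$, $0\le h'\le1$, $\|h''\|_\infty\lesssim\beta^{-1}$ and $r-\beta\le h(r)\le r$, I would set, on $[0,T]\times\cH\times(\R^d)^N$,
\[
\Gamma^N(t,X,\bx)=\hat U(t,X)-V^N(t,\bx)-\tfrac1{2\eps}\|X-Y^N_{\bx,\Pi}\|_2^2-\tfrac\eta2(1+T-t)\|X\|_2^2-\lambda(T-t)-\gamma\big(\tfrac1t-\tfrac1T\big)-\delta\,h\big(\|Y^N_{\bx,\Pi}-\ov Y\|_2\big).
\]
As in Lemma~\ref{lem.fistpenalization}, coercivity gives a maximum point $(\ov s,\hat X,\ov\bx)$, and Lemma~\ref{lem.linf.touching} applied to the $X$- and $\bx$-slices yields $\|\eps^{-1}(\hat X-Y^N_{\ov\bx,\Pi})\|_\infty+\eta\|\hat X\|_\infty\lesssim1$.

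Inserting the competitor $\bx_0$ with $Y^N_{\bx_0,\Pi}=\E[\ov Y\mid\cF_\Pi]$ into $\Gamma^N(\ov t,\ov X,\cdot)$, and using the representation $\hat V^N(\ov t,\ov Y)=\inf_\bx\{V^N(\ov t,\bx)+2C_0\,\bd_1(m^N_\bx,\cL(\ov Y))\}$, the $L^1$-Lipschitz bound for $\hat V^N$, Young's inequality and $\|\ov X-\ov Y\|_2\lesssim\eps$, one obtains $\max\Gamma^N\ge\hat\Phi^N(\ov t,\ov X,\ov Y)-C(1+\delta)\rho_N(\ov Y,\Pi)-C\eps^{-1}\rho_N(\ov Y,\Pi)^2$; the same competitor comparison, together with the first-order optimality in $\bx$ and $h(r)\ge r-\beta$, controls the auxiliary discrepancies $\|Y^N_{\ov\bx,\Pi}-\ov Y\|_2$ and $\|\hat X-\ov X\|_2$ by (a constant times) $\beta+\tfrac1{\delta\eps}\rho_N(\ov Y,\Pi)^2+\tfrac1\delta\rho_N(\ov Y,\Pi)$ — this is the device that lets every error below be re-expressed through $\ov X,\ov Y$, as in the statement. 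In the borderline situation where the maximizer $\ov s$ equals $T$, a short-time/terminal comparison — using $\hat U(T,\cdot)=\hat G$, $V^N(T,\cdot)=G(m^N_\cdot)$, Lipschitz continuity of $G$ and completing the square in the $\tfrac1{2\eps}$-term exactly as in Lemma~\ref{lem:doubled_small_time}, so that $\Gamma^N(T,\hat X,\ov\bx)\le C_G^2\eps/2$ — combined with the value comparison above converts the remaining information into the asserted bound $T-\ov t\le C_0(\beta+\tfrac1{\delta\eps}\rho_N^2+\tfrac1\delta\rho_N)$.

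It remains to treat $\ov s<T$ and to show that then $\lambda$ must be small, contradicting the hypothesis. Here I would apply the parabolic Crandall--Ishii lemma (Lemma~\ref{lem.cks}) at $(\ov s,\hat X,\ov\bx)$ with $H_1=\cH$, $Z_1=\cH_0$ the subspace of constant random vectors (the directions seen by $\com$), $H_2=Z_2=(\R^d)^N$, $U_1=\hat U$, $U_2=-V^N$, free parameter $\beta$ (in the role of the ``$\kappa$'' of that lemma), and $\cA$ the Hessian on $\cH\times(\R^d)^N$ of the smooth penalization $\tfrac1{2\eps}\|X-Y^N_{\bx,\Pi}\|_2^2+\tfrac\eta2(1+T-t)\|X\|_2^2+\delta h(\|Y^N_{\bx,\Pi}-\ov Y\|_2)$; condition \eqref{BT} holds for $-V^N$ (as in the proof of Lemma~\ref{lem.hatvn.cks}) and for $\hat U$ (it solves an evolution equation). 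Using Lemma~\ref{lem.projection.deriv} to identify the jets of $V^N$ with $\cF_\Pi$-conditional expectations, one applies the subsolution inequality for $\hat U$ at $(\ov s,\hat X)$ and the supersolution inequality for $V^N$ at $(\ov s,\ov\bx)$ and subtracts, using $a_1-a_2=-\lambda-\tfrac\eta2\|\hat X\|_2^2-\gamma\ov s^{-2}$. In the difference of Hamiltonian terms, the Lipschitz continuity of $H$ in $(x,p,m)$ (available after the reduction at the start of Section~\ref{sec.main}) produces: an $x$-error $\lesssim\|\hat X-Y^N_{\ov\bx,\Pi}\|_1+\bd_1(\cL(\hat X),m^N_{\ov\bx})$; a $p$-error from the $\eta$-term $\lesssim\eta\|\hat X\|_2\le\tfrac\eta2+\tfrac\eta2\|\hat X\|_2^2$ with the quadratic part absorbed on the left; a $p$-error from the $\delta h$-term $\lesssim\delta$; and the genuine quantization error $\lesssim\eps^{-1}\|\hat X-\E[\hat X\mid\cF_\Pi]\|_1$, which via the discrepancy bounds is $\lesssim\rho_N(\tfrac{\ov X-\ov Y}\eps,\Pi)+\eps^{-1}\rho_N(\ov Y,\Pi)+\eps^{-1}\|\hat X-\ov X\|_2$. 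For the second-order terms, the key point — exactly as in the $d=1$ case where $\cA\mathbf{1}=0$ and $P_{\cH_0^\perp}[e_k,e_k]=0$ — is that $\tfrac1{2\eps}\|X-Y^N_{\bx,\Pi}\|_2^2$ is invariant under translating $X$ and every $x^i$ by the same constant vector, so its Hessian annihilates the common-noise directions $(e_k,(e_k,\dots,e_k))$; only $\delta h$ contributes, and the matrix inequality of Lemma~\ref{lem.cks} with parameter $\beta$ bounds the resulting common-noise term by $\lesssim\delta(1+\kappa\beta^{-1})+\beta\eps^{-1}$. Collecting everything gives $\lambda\lesssim\eta+\delta(1+\kappa/\beta)+\eps+\beta/\eps+\tfrac1{\delta\eps^2}\rho_N(\ov Y,\Pi)^2+\tfrac1{\delta\eps}\rho_N(\ov Y,\Pi)+\rho_N(\tfrac{\ov X-\ov Y}\eps,\Pi)$, contradicting the hypothesis once $C_0$ is large enough.

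The main obstacle, as anticipated in Subsection~\ref{subsec.methods}, is the error bookkeeping in the last step: one must control the auxiliary discrepancies $\|\hat X-\ov X\|_2$, $\|Y^N_{\ov\bx,\Pi}-\ov Y\|_2$ sharply enough to phrase all errors through $\ov X,\ov Y$, and must propagate the common-noise second-order term through Lemma~\ref{lem.cks} while keeping the dependence on $\kappa,\beta,\eps,\delta$ precisely as in the claimed estimate — it is the common noise that makes this genuinely harder than the $d=1$ argument — while also checking that the borderline case $\ov s=T$ really yields the stated bound on $T-\ov t$ and nothing weaker.
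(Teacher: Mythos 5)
Your overall plan — a second optimization over discrete configurations adapted to $\Pi$, with the penalization $\delta h(\|Y^N_{\bx,\Pi}-\ov Y\|_2)$ forcing the discrete maximizer back to $\ov Y$ — is the right idea, but collapsing everything into the single functional $\Gamma^N(t,X,\bx)$ creates two genuine gaps. First, the temporal dichotomy fails. The conclusion to be proved is a bound on $T-\ov t$, and in your scheme the only information available when the maximizer $\ov s$ of $\Gamma^N$ equals $T$ is the value comparison $\Gamma^N(T,\hat X,\ov\bx)\le C_G^2\eps/2$ together with $\Gamma^N(T,\hat X,\ov\bx)\ge \hat\Phi^N(\ov t,\ov X,\ov Y)-(\text{errors})$; this bounds $\max\hat\Phi^N$, not $T-\ov t$ (there is no lower bound on $\hat U(\ov t,\ov X)-\hat V^N(\ov t,\ov Y)-\tfrac1{2\eps}\|\ov X-\ov Y\|_2^2$ that would let you extract $\lambda(T-\ov t)$ from it, and in any case the asserted bound on $T-\ov t$ contains no $\eps$ term). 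Your $\Gamma^N$ has no mechanism tying $\ov s$ to $\ov t$: the paper obtains the dichotomy precisely because its second optimization is posed on $[t_n,T]$ with $t_n\to\ov t$ and carries the penalization $\delta(r-t_n)$, so that the minimizer satisfies $|r_n-t_n|\lesssim\beta+\tfrac1{\delta\eps}\rho_N^2+\tfrac1\delta\rho_N$, and hence $r_n<T$ whenever $T-\ov t$ exceeds that quantity.

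Second, and independently, the error bookkeeping in the case $\ov s<T$ produces an irreducible $O(1)$ term. The momentum tested against $V^N$ is $\E\big[\tfrac{\hat X-Y^N_{\ov\bx,\Pi}}{\eps}\mid\cF_\Pi\big]$, where $\hat X$ is a \emph{new} maximizer. The quadratic penalization only pins $\hat X$ down to within $O(\eps)$ of $\ov X$ (both $\|\hat X-Y^N_{\ov\bx,\Pi}\|_2$ and $\|\ov X-\ov Y\|_2$ are genuinely of order $\eps$, with no cancellation), so your claimed discrepancy bound $\|\hat X-\ov X\|_2\lesssim\beta+\tfrac1{\delta\eps}\rho_N^2+\tfrac1\delta\rho_N$ is not justified; the correct bound carries an extra $O(\eps)$. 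Consequently the quantization error $\eps^{-1}\|\hat X-\E[\hat X\mid\cF_\Pi]\|_1$ contains the contribution $\eps^{-1}\|\hat X-\ov X\|_1\gtrsim 1$, and the resulting inequality $\lambda\lesssim 1+\cdots$ is vacuous. This is exactly why the paper proceeds in two stages: it first applies the Crandall--Ishii lemma on $\cH\times\cH$ at $(\ov t,\ov X,\ov Y)$ to place the \emph{exact} momentum $\tfrac{\ov X-\ov Y}{\eps}$ in the limiting subjet of $\hat V^N$ at $(\ov t,\ov Y)$, and only then runs the finite-dimensional optimization against the resulting smooth test function $F_n$; the gradient perturbation there is $\eps^{-1}\|Y^N_{\bx_n}-Y_n\|_2$, which is of order $\tfrac\beta\eps+\tfrac1{\delta\eps^2}\rho_N^2+\tfrac1{\delta\eps}\rho_N$ rather than $O(1)$, and the momentum error reduces cleanly to $\rho_N\big(\tfrac{\ov X-\ov Y}{\eps},\Pi\big)$. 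To repair your argument you would need to decouple the $X$-optimization (done once, on $\cH\times\cH$, to produce jets) from the $\bx$-optimization (done second, time-localized near $\ov t$), as the paper does.
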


\begin{proof}
We denote by $\cH_0$ the finite-dimensional subspace of $\cH$ spanned by the constant random vectors $(e^i)_{i = 1,\ldots,d}$. We define $P_{\cH_0}  : \cH \to \cH$ to be the orthogonal projection onto $\cH_0$, and set $P_{\cH_0^{\perp}} = \text{Id}_{\cH} - P_{\cH_0}$. 

Let $(\ov{t}, \ov{X}, \ov{Y})$ be a maximum point of $\hat{\Phi}_{\eps, \eta, \lambda,\gamma}^N$, and suppose that $\ov{t} < T$. Then, by Lemma \ref{lem.hatvn.cks}, both
\[
    (t, X) \mapsto \hat U(t,X) - \frac{\eta}{2} (1 + T - t) \norm{X}_2^2
\]
and $- \hat V$ satisfy condition \eqref{BT}. We can therefore apply Lemma \ref{lem.cks} with $H_1 = H_2 = \mcl H$, $Z_1 = Z_2 = \mcl H_0$, 
\[
    a = -\lambda - \gamma \ov t^{-2},
\]
\[
    (p_1,p_2) = \left( \frac{\ov X - \ov Y}{\eps} , - \frac{\ov X - \ov Y}{\eps} \right),
\]
and
\[
    \mcl A = 
    \frac{1}{\eps}
    \begin{pmatrix}
        \Id_{\mcl H} & - \Id_{\mcl H} \\
        - \Id_{\mcl H} & \Id_{\mcl H}.
    \end{pmatrix}
\]
We will take $\eps = \kappa$ in the application of Lemma \ref{lem.cks}, where $\eps$ agrees with the parameter used throughout this section; observe then that
\[
    \mcl B := \mcl A + \eps \mcl A^2 = \frac{3}{\eps} 
    \begin{pmatrix}
        \Id_{\mcl H} & - \Id_{\mcl H} \\
        - \Id_{\mcl H} & \Id_{\mcl H}
    \end{pmatrix},
    \quad
    \mcl B + \eps \mcl B^2 = \frac{19}{\eps} 
    \begin{pmatrix}
        \Id_{\mcl H} & - \Id_{\mcl H} \\
        - \Id_{\mcl H} & \Id_{\mcl H}
    \end{pmatrix},
    \quad \text{and} \quad
    \norm{\mcl B}_\op = \frac{6}{\eps}.
\]
Invoking Lemma \ref{lem.cks} and using elementary properties of sub/superjets, we infer that there exist $a_1, a_2 \in \R$, $A, B \in \text{Sym}(\cH)$, such that
\begin{align*}
    &A = P_{\cH_0} A P_{\cH_0}, \quad B = P_{\cH_0} B P_{\cH_0}, \\
    &\Big(a_1 - \frac{\eta}{2} \norm{\ov X}_2^2, \frac{\ov{X} - \ov{Y}}{\eps} + \eta\big(1 + T - \ov{t} \big) \ov{X}, A + \eta(1 + T- \ov{t}) \text{Id}_{\cH} + \frac{3}{\eps} P_{\cH_0^{\perp}} \Big) \in \ov{J}^{2,+} \hat{U}(\ov{t}, \ov{X}), 
    \\
    &\Big(-a_2, \frac{\ov{X} - \ov{Y}}{\eps}, B - \frac{3}{\eps} P_{\cH_0^{\perp}}  \Big) \in \ov{J}^{2,-} \hat{V}^N(\ov{t}, \ov{Y}), 
    \\
    &a_1 - a_2 =  - \lambda - \gamma \ov{t}^{-2} \leq  - \lambda , \quad \text{and}
    \\
    &- \frac{7}{\eps} \text{Id}_{\cH \times \cH} \leq \begin{pmatrix}
        A & 0 \\
        0 & - B
    \end{pmatrix}
    \leq \frac{19}{\eps}  \begin{pmatrix}
      P_{\cH_0}  &
         -P_{\cH_0} \\
         -P_{\cH_0} & P_{\cH_0}
    \end{pmatrix}. 
\end{align*}
Now we apply the definition of the limiting superjet $\ov{J}^{2,-}(\ov{t},\ov{Y})$, to find a sequence of points
\[
	\left(t_n,Y_n, q_n, p_n, R_n \right) \in [0,T] \times \mcl H \times \R \times \mcl H \times \text{Sym}(\mcl H), \quad n \in \N
\]
such that 
\[
	\lim_{n \to \infty} (t_n, Y_n, q_n, p_n, R_n) = \left(\ov t,  \ov Y, a_2,  \frac{ \ov X -  \ov Y}{\eps}, B - \frac{3}{\eps} P_{\cH_0^{\perp}} \right)
\]
and
\[
	[t_n, T] \times \cH \ni (t,Y) \mapsto \hat{V}^N(t,Y) - F_n(t,Y)
\]
attains a local minimum at $(t_n,Y_n)$, where
\[
	F_n(t,Y) := (q_n- n^{-1})(t-t_n) +  \left\langle p_n, Y - Y_n \right \rangle + \frac{1}{2}(R_n - n^{-1} \Id)[ Y - Y_n, Y - Y_n].
\]
We note for later use that 
\begin{align} \label{phin.derivatives}
    \nabla \phi_n(t,Y) = p_n + (R_n - n^{-1} \text{Id}_{\cH})(Y - Y_n), \quad \nabla^2 \phi_n(t,Y) = (R_n - n^{-1} \text{Id}_{\cH}).
\end{align}
Fix $\beta > 0$ and define
\[
   h_\beta(r) = (r^2 + \beta^2)^{1/2}, \quad r \in \R.
\]
Observe that 
\begin{equation}\label{h_beta_bounds}
    r \vee \beta \le h_\beta(r) \le r + \beta, \quad   |h_\beta'| \le 1, \quad \text{and} \quad 0 \le h''_\beta \le \beta^{-1}.
\end{equation}
Now fix an $N$-partition $\Pi = (\Omega_i)_{i=1}^N \in P_N$ of $\Omega$, and, for some $\delta > 0$, we consider the function
\begin{equation}\label{doubling.2}
	[t_n, T] \ni (r,\bx) \mapsto V^N(r,\bx) - F_n(r,Y^N_\bx) + \delta h_\beta( \norm{Y^N_\bx - Y_n}_2 ) + \delta (r-t_n),
\end{equation}
where above and for the rest of the argument we write $Y^N_\bx = Y^N_{\bx, \Pi}$. 

We now claim that the function \eqref{doubling.2} attains a local minimum at some $(r_n, \bx_n) \in [t_n,T] \times (\Rd)^N$, which satisfies
\begin{equation}\label{upperbound.littlen}
	\limsup_{n \to \infty} \left( \norm{Y^N_{ \bx_n} - Y_n}_2 + |r_n - t_n| \right) \lesssim \beta  + \frac{1}{\delta \eps} \rho_N(\ov Y, \Pi)^2 + \frac{1}{\delta} \rho_N(\ov{Y}, \Pi), 
\end{equation}
with implied constant independent of $\eps, \eta, \lambda, \gamma, M, \beta, \delta, \Pi$. To see this, we choose $R > 2\rho_N(Y_n, \Pi)$ and define the compact set
\[
    \mcl C_R = \{ \by \in (\Rd)^N : \norm{Y^N_\by - Y_n}_{2} \le R \}.
\]
Note that $\mcl C_R$ is nonempty by the definition of $\rho_N(Y_n, \Pi)$.

Let $(r_n, \bx_n)$  be a point at which  \eqref{doubling.2} attains a minimum over $[t_n,T] \times \mcl C_R$. Then, for any $\by \in \mcl C_R$,
\begin{align*}
	\hat{V}^N(t_n,Y_n) &- F_n(t_n,Y_n) + \delta h_\beta\Big(\norm{Y^N_{ \bx_n} - Y_n}_2 \Big) + \delta(r_n -t_n)\\
	&\le \hat{V}^N(r_n,Y^N_{ \bx_n}) - F_n( r_n , Y^N_{\bx_n}) + \delta h_\beta \Big( \norm{Y^N_{ \bx^*_n} - Y_n}_2 \Big) + \delta (r_n -t_n)\\
	&\le V^N(t_n,\by) - F_n(t_n,Y^N_\by) + \delta h_\beta(\norm{Y^N_{ \by} - Y_n}_2).
\end{align*}
Rearranging terms gives
\begin{align*}
	&\delta h_\beta\left(\norm{Y^N_{ \bx_n} - Y_n}_2\right) + \delta |r_n-t_n| \\
    &\le \hat{V}^N(t_n,Y^N_\by) - \hat{V}^N(t_n,Y_n) + F_n(t_n,Y_n) - F_n(t_n,Y^N_\by) + \delta  h_\beta(\norm{Y^N_\by - Y_n}_2)  \\
	&\lesssim \norm{Y^N_\by - Y_n}_2  +  \frac{1}{\eps} \norm{Y^N_\by - Y_n}_2^2 + \delta h_\beta(\norm{Y^N_\by - Y_n}_2).
\end{align*}
From the assumption on $R$, there exists $\by \in \mcl C_R$ such that $\norm{Y^N_\by - Y_n}_2 \le 2 \rho_N(Y_n, \Pi)$. Upon choosing this $\by$ and dividing by $\delta >0$, we find that
\[
    \norm{Y^N_{\bx_n} - Y_n}_2 + |r_n - t_n| \lesssim 
    \beta  + \frac{1}{\delta \eps} \rho_N(Y_n, \Pi)^2 + \frac{1}{\delta} \rho_N(Y_n, \Pi).
\]
We then take $R$ larger than an appropriate multiple of the right-hand side above, which implies that $\bx_n$ is an interior minimum. The limiting statement then follows from the fact that $\mcl H \ni Y \mapsto \rho_N(Y, \Pi)$ is continuous and $\lim_{n \to \infty} \norm{Y_n - \ov{Y}}_{\mcl H} = 0$. Thus \eqref{upperbound.littlen} holds.  

From \eqref{upperbound.littlen}, we deduce that there is a constant $C_0$ such that, if 
\begin{align} \label{ovt.bound}
    T - \ov{t} > C_0 \Big( \beta + \frac{1}{\delta \eps} \rho_N^2(\ov Y, \Pi) + \frac{1}{\delta} \rho_N(\ov Y, \Pi)\Big), 
\end{align}
then
\begin{align} \label{rn.neqt}
    r_n < T \quad \text{for all large enough $n$.}
\end{align}
We now suppose that $\ov{t}$ satisfies \eqref{ovt.bound}, and aim to bound $\lambda$ from above. We first use the fact that $\hat{U}$ is a subsolution of \eqref{hjb.hilbert}, and the fact that $P_{\cH_0^{\perp}} e_k = 0$ for $k = 1,\ldots,d$, to find that
\begin{align} \label{hatu.subsol}
    - a_1 + \frac{\eta}{2} \norm{\ov X}_2^2 - \eta (1 + T - \ov{t}) d - \kappa \sum_{k = 1}^d A [e_k, e_k] + \hat{H}\Big( \ov{X}, \frac{\ov{X} - \ov{Y}}{\eps} + \eta(1 + T - \ov{t}) \ov{X}\Big) \leq 0.
\end{align}
Next, 
define
$$f_n(r,\bx) := F_n(r,Y^N_\bx) - \delta h_\beta( \norm{Y^N_\bx - Y_n}_2 ) - \delta (r-t_n),$$
so that 
\[
   [t_n, T] \times (\R^d)^N \ni  (r, \bx) \mapsto V^N(r,\bx) - f_n(r,\bx)
\]
has a local minimum at $(r_n,\bx_n)$, and for all large enough $n$, we have $r_n < T$. Note that this minimum is local with  respect to $[t_n,T) \times (\R^d)^N$ rather than $(0,T) \times (\R^d)^N$. It turns out that the viscosity solution definition may still be applied to this setting even if it happens that $r_n = t_n$, which can be justified by adding an additional penalization of the form $\ov \gamma( t_n^{-1} - r^{-1})$ and sending $\ov \gamma \to 0$; we omit the details to ease the presentation, since this is standard technique in the finite-dimensional viscosity solution theory. 

The super-solution property for $V^N$ thus yields
\begin{align} \label{vn.supersol} 
    -\partial_t f_n(r_n,\bx_n) - \kappa \sum_{i,j=1}^N \tr(D_{x^{i}x^j}f_n(r_n,\bx_n)) + \frac{1}{N} \sum_{i=1}^N H(x^{i}_n, N D_{x^{i}}f_n(r_n,\bx_n),m^N_{\bx_n}) \geq 0.
\end{align}
Notice that by design $\partial_t f_n(r_n,\bx_n) = q_n - \frac{1}{n} - \delta$, while Lemma \ref{lem.projection.deriv} allows us to compute
\begin{align*}
    &D_{x^i} f_n(r_n,\bx_n) = \frac{1}{N} \E\big[ p_n | \Omega^i \big] + \frac{1}{N} \E\big[ (R_n - n^{-1} \text{Id}_{\cH}) (Y_{\bx_n} - Y_n) | \Omega^i \big] + \frac{\delta}{N} \E\Big[ \nabla \Big( h_{\beta}\big( \| \cdot - Y_n\|_2 \big) \Big)(Y_{\bx_n}) | \Omega^i \Big], 
    \\
    &\tr\big( D_{x^ix^j} f_n(r_n^*, \bx_n^*) \big) = \sum_{k = 1}^d R_n [1_{\Omega^i} e_k, 1_{\Omega^i} e_k] - \frac{d}{nN} 1_{i = j} - \delta \sum_{k = 1}^d \nabla^2 \Big( h_{\beta}\big( \| \cdot - Y_n\|_2 \big) \Big)(Y_{\bx_n}^N) \big[1_{\Omega^i}, 1_{\Omega^j} \big].
\end{align*}
We can thus rewrite \eqref{vn.supersol} as
\begin{align} \label{vn.supersol.2}
    &- q_n + \frac{1}{n} + \delta + \frac{\kappa d}{n} - \kappa \sum_{k = 1}^d R_n [e_k, e_k] - \kappa  \delta \sum_{k = 1}^d \nabla^2 \Big( h_{\beta}\big( \| \cdot - Y_n\|_2 \big) \Big)(Y_{\bx_n}^N) \big[e_k, e_k \big]
    \nonumber \\
    &\quad + \hat{H} \Big( Y_{\bx_n}^N, \E\Big[ p_n + (R_n - n^{-1} \text{Id}_{\cH}) (Y_{\bx_n^*} - Y_n) + \delta \nabla \Big( h_{\beta}\big( \| \cdot - Y_n\|_2 \big) \Big)(Y_{\bx_n})   | \Pi\Big]\Big) \geq 0.
\end{align}
Note that the bounds for $h_\beta$ in \eqref{h_beta_bounds} give
\begin{align*}
    \norm{ \nabla \Big( h_{\beta}\big( \| \cdot - Y_n\|_2 \big) \Big)(Y_{\bx_n})  }_2 \lesssim 1, \quad \norm{ \nabla^2 \Big( h_{\beta}\big( \| \cdot - Y_n\|_2 \big) \Big)(Y_{\bx_n}) }_{\op} \lesssim \beta^{-1}.
\end{align*}
Thus we can subtract \eqref{vn.supersol.2} from \eqref{hatu.subsol}, and use the Lipschitz regularity of $H$, to obtain
\begin{align}
    \label{errorterms}
    - a_1 + \frac{\eta}{2} \norm{\ov X}_2^2 + q_n &\lesssim \kappa \sum_{k = 1}^d (A - R_n ) [e_k,e_k] +  \eta(1 + \|\ov{X}\|_1 \big) + \frac{1}{n} + \delta + \frac{\kappa \delta}{\beta}
   \nonumber  \\
    &\qquad  + \frac{1}{\eps} \|Y_{\bx_n} - Y_n \|_{1} + \hat{H}\Big( Y_{\bx_n}^N, \E\big[ p_n | \Pi\big]\Big) - \hat{H}\Big(\ov{X}, \frac{\ov{X} - \ov{Y}}{\eps} \Big)
   \nonumber  \\
   &\lesssim \kappa \sum_{k = 1}^d (A - R_n ) [e_k,e_k] +  \eta(1 + \|\ov{X}\|_1 \big) + \frac{1}{n} + \delta + \frac{\kappa \delta}{\beta}
   \nonumber  \\
    &\qquad  + \frac{1}{\eps} \|Y_{\bx_n^*} - Y_n \|_{1} + \hat{H}\Big( Y_{\bx_n}^N, \E\big[ p_n | \Pi\big]\Big) - \hat{H}\Big(Y_{\bx_n}^N, \frac{\ov{X} - \ov{Y}}{\eps} \Big) + \|Y_{\bx_n}^N - \ov X \|_2
   \nonumber 
   \\
    &\lesssim \kappa \sum_{k = 1}^d (A - R_n ) [e_k,e_k] +  \eta(1 + \|\ov X\|_2 \big) + \frac{1}{n} + \delta  + \frac{\kappa \delta}{\beta}
   \nonumber  \\
    \nonumber &\qquad + \frac{1}{\eps} \|Y_{\bx_n} - Y_n \|_{2} + \|Y_{\bx_n} - \ov{X} \|_2 + \norm{ \E\big[ p_n | \Pi \big] - \frac{\ov{X} - \ov{Y}}{\eps}}_2
    \\
  \nonumber   &\lesssim \kappa \sum_{k = 1}^d (A - R_n ) [e_k,e_k] +  \eta(1 + \|\ov{X}\|_2 \big) + \frac{1}{n} + \delta  + \frac{\kappa \delta}{\beta}
    \\
    &\qquad + \frac{1}{\eps} \|Y_{\bx_n} - Y_n \|_{2} + \|Y_n - \ov X\|_2 +  \norm{ \E\big[ p_n | \Pi \big] - \frac{\ov{X} - \ov{Y}}{\eps}}_2. 
\end{align}
Since this holds for all large enough $n$, we can send $n \to \infty$ and use \eqref{upperbound.littlen} and the facts that $-a_1 + a_2 \ge \lambda$ and $\| \ov Y - \ov X\|_{2} \lesssim \eps$ to obtain 
\begin{align*}
    \lambda + \frac{\eta}{2} \norm{\ov{X}}_2^2 &\lesssim \sum_{k = 1}^d \big(A - B \big)[e_k,e_k] + \eta(1 + \|\ov{X}\|_2) + \delta\Big( 1 + \frac{\kappa}{\beta} \Big) + \eps
    \\
    &\quad + \frac{\beta}{\eps} + \frac{1}{\delta \eps^2} \rho_N^2(Y^*, \Pi) + \frac{1}{\delta \eps } \rho_N(Y^*, \Pi) + \norm{\E\Big[ \frac{\ov{X} - \ov{Y}}{\eps} | \Pi \Big] - \frac{\ov{X} - \ov{Y}}{\eps} \Big]}_2
    \\
    &\lesssim \eta(1 + \norm{\ov{X}}_2) + \delta\Big( 1 +  \frac{\kappa}{\beta} \Big) + \eps + \frac{\beta}{\eps} + \frac{1}{\delta \eps^2} \rho_N^2(Y^*, \Pi) + \frac{1}{\delta \eps } \rho_N(Y^*, \Pi) + \rho_N\Big( \frac{\ov{X}- \ov{Y}}{\eps}, \Pi \Big). 
\end{align*}
An application of Young's inequality to the first term on the right-hand side completes the proof. 

\end{proof}

\subsubsection{The choice of $\Pi$ and the end of the proof}

We are now ready to complete the proof of Theorem \ref{thm.main} in the case $d \geq 2$. This is done by appropriately choosing the partition $\Pi$ with the use of the simultaneous quantization result Proposition \ref{prop.quantization.sim}.

\begin{proof}[Proof of Theorem \ref{thm.main}]
  We continue to use the same notation from the previous sections: $\eps, \eta, \lambda, \gamma > 0$ are parameters, and $(\ov{t}, \ov{X}, \ov{Y})$ denotes a maximum point of $\Phi_{\eps, \eta, \lambda,\gamma}^N$. We another introduce another parameter, based on the choice of the partition $\Pi$. In particular, by Proposition \ref{prop.quantization.sim} and Lemma \ref{lem.fistpenalization}, there is a constant $C$ such that for each $\eps, \eta, \lambda$ and $\alpha \in (0,1)$, we can find $\Pi$ such that
  \begin{align}
  \label{eq:23/09:14:41}
      \rho_N(\ov{Y}, \Pi) \leq C \eta^{-1} r_{\lfloor N^{\alpha} \rfloor , d} , \quad \text{and} \quad
      \rho_N\Big(\frac{\ov{X} - \ov{Y}}{\eps}, \Pi \Big) \leq C r_{\lfloor N^{1-\alpha} \rfloor , d}.
  \end{align}
  Combining this with Proposition \ref{prop.lambabound}, we see that there is a constant $C$ such that for each $\eps, \eta, \lambda, \gamma,  \beta, \delta,\alpha$, we have the implication
  \begin{align*}
      &\lambda \geq C\bigg( \eta + \delta\Big( 1 +  \frac{\kappa}{\beta} \Big) + \eps +  \frac{\beta}{\eps} + \frac{1}{\delta \eps^2 \eta^2}  r_{\lfloor N^{\alpha} \rfloor,d}^2 + \frac{1}{\delta \eps \eta } r_{\lfloor N^{\alpha} \rfloor ,d} + r_{\lfloor N^{1 -\alpha} \rfloor ,d} \bigg)
      \\
      &\qquad \qquad \implies 
        T - \ov{t} \leq C \Big( \beta  + \frac{1}{\delta \eps \eta^2} r^{2}_{ \lfloor N^{\alpha} \rfloor ,d} + \frac{1}{\delta \eta } r_{ \lfloor N^{\alpha} \rfloor ,d} \Big).
    \end{align*}
    In particular, taking 
    \begin{align*}
        \lambda = C\bigg( \eta + \delta\Big( 1 +  \frac{\kappa}{\beta} \Big) + \eps +  \frac{\beta}{\eps} + \frac{1}{\delta \eps^2 \eta^2}  r_{\lfloor N^{\alpha} \rfloor,d}^2 + \frac{1}{\delta \eps \eta } r_{\lfloor N^{\alpha} \rfloor ,d} + r_{\lfloor N^{1 -\alpha} \rfloor ,d} \bigg), 
    \end{align*}
    we can use \eqref{startingest} and Lemma \ref{lem.shorttime.higherdim} to get that for all $\eps, \eta, \beta, \delta, \gamma > 0$ and $\alpha \in (0,1)$,
    \begin{align} \label{beforekappa}
        &U(t,m_{\bx}^N) - V^N(t,\bx) \leq C \Big( (T - \ov{t}) + \eps + \eps^{-1} \eta^{-1} r_{\lfloor N^{\alpha} \rfloor,d} + \eta M_2(m_{\bx}^N) + \lambda + \gamma t^{-1} \Big)
       \nonumber  \\
        &\,\, \leq C \bigg( \eta + \delta\Big( 1 +  \frac{\kappa}{\beta} \Big) + \eps +  \frac{\beta}{\eps} + \frac{1}{\delta \eps^2 \eta^2} r^2_{\lfloor N^{\alpha} \rfloor,d} +  \frac{1}{\delta \eps \eta } r_{\lfloor N^{\alpha} \rfloor,d} +r_{\lfloor N^{1-\alpha} \rfloor,d}  + \eta M_2(m_{\bx}^N) + \gamma t^{-1}  \bigg). 
    \end{align}
    Sending $\gamma \to 0$ first and then choosing
\begin{align*}
  \eta = \eps, \quad  \beta = \eps^2, \quad \delta = \eps^3, \quad \eps = r_{\lfloor N^{\alpha} \rfloor, d}^{1/6}
\end{align*}
we find that for each $\alpha \in (0,1)$,
\begin{align*}
    U(t,m_{\bx}^N)& - V^N(t,\bx) \leq C \Big(r_{\lfloor  N^{\alpha} \rfloor , d}^{1/6} + r_{\lfloor  N^{1 - \alpha} \rfloor , d} + r_{\lfloor  N^{\alpha} \rfloor , d}^{1/6} M_2(m_{\bx}^N) \Big).
\end{align*}
Recalling the definition of $r_{N,d}$ in \eqref{def.rnd} and using the fact that $\log(1+N^\tau) \le \log(1+N)$ for all $\tau \in (0,1)$,  we estimate
\begin{align*}
    r_{\lfloor  N^{\alpha} \rfloor , d}^{1/6}
    \lesssim 
    \begin{cases}
       N^{-\frac{\alpha}{12}} \log\big(1 + N \big)^{1/12}, &  d = 2,
       \\
       N^{- \frac{\alpha}{6d}}, & d \geq 3,
    \end{cases}
\end{align*}
and
\begin{align*}
    r_{\lfloor  N^{1 - \alpha} \rfloor , d}
    \lesssim
    \begin{cases}
        N^{-\frac{1 - \alpha}{2}} \log\big( 1 + N \big)^{1/2}, & d = 2,\\
        N^{-\frac{1-\alpha}{d}}, & d \geq 3,
    \end{cases}
\end{align*}
and thus choose (when $N$ is sufficiently large relative to a universal constant)
\[
    \alpha =
    \begin{dcases}
        \frac{6}{7} \left( 1 - \frac{5}{6} \frac{\log \log (1+N)}{\log (1+N)} \right), & d = 2, \\
        \frac{6}{7}, & d \ge 3
    \end{dcases}
\]
to obtain
\begin{align*}
U(t,m_{\bx}^N)& - V^N(t,\bx) \lesssim \big(1 + M_2(m_{\bx}^N)\big) 
\begin{cases}
      N^{- \frac{1}{14}} \log\big(1 + N \big)^{1/7} &  d = 2,
       \\
      N^{-\frac{1}{7d}} & d \geq 3.
    \end{cases}
\end{align*}
In the case that $\kappa = 0$, we first send $\beta \to 0$ in \eqref{beforekappa} to obtain
\begin{align*}
    U(t,m_{\bx}^N) - V^N(t,\bx) \leq C\Big(\eta + \delta + \eps + \frac{1}{\delta \eps^2 \eta^2} r^2_{\lfloor N^{\alpha} \rfloor,d} + \frac{1}{\delta \eps \eta} r_{\lfloor N^\alpha \rfloor, d} + r_{\lfloor N^{1-\alpha} \rfloor, d} + \eta M_2(m_{\bx}^N) \Big), 
\end{align*}
and then choose $\eta = \delta = \eps = r_{\lfloor N^\alpha \rfloor, d}^{1/4}$ to find
\begin{align*}
    U(t,m_{\bx}^N) - V^N(t,\bx) \lesssim \big(1 + M_2(m_{\bx}^N) \big)\Big( r_{\lfloor N^\alpha \rfloor,d}^{1/4} + r_{\lfloor N^{1-\alpha} \rfloor, d} \Big), 
\end{align*}
and again optimizing in $\alpha$ gives 
\begin{align*}
U(t,m_{\bx}^N)& - V^N(t,\bx) \leq  C\big(1 + M_2(m_{\bx}^N)\big) \begin{cases}
      N^{- \frac{1}{10}} \log\big(1 + N \big)^{1/5} &  d = 2
       \\
      N^{-\frac{1}{5d}} & d \geq 3.
    \end{cases}
\end{align*}

\section{Toward improved estimates under stronger assumptions} \label{sec.improvements}

We now discuss, without presenting too many details, various additional assumptions or adjusted settings that can simplify the arguments and/or improve slightly the rates in Theorem \ref{thm.main}. 

\subsection{Bounded state space or stronger moments} First, to avoid compactness issues, we could assume that the data is periodic; this would mean that 
   \begin{align} \label{def.periodic}
    H(x,p,m) = \wt{H}\big(\pi(x), p, \pi_{\#} m \big), \quad G(m) = \wt{G}\big(\pi_{\#} m \big), 
   \end{align}
   where $\pi : \R^d \to \T^d \coloneqq \R^d / \Z^d$ is the quotient map from $\R^d$ to the $d$-dimensional flat torus $\T^d$, and 
   \begin{align} \label{def.wtH}
       \wt{H} : \T^d \times \R^d \times \cP(\T^d), \quad G : \cP(\T^d) \to \R
   \end{align}
   satisfying the conditions 
   \begin{align} \label{periodic.1}
       &|\wt H(x,p,m)|\leq C_H \big(1 + |p|^2 \big), 
       \\ \label{periodic.2}
       &|\wt H(x,p,m) - \wt H(x',p',m')| \leq C_H \big(1 + |p| \big)\Big( |x -x'| + |p-p'| + \bd_{\T^d,1} (m,m') \Big), 
       \\ \label{periodic.3}
       &|G(m)| \leq C_G, \quad |G(m) - G(n)| \leq C_G \bd_{\T^d,1}(m,n)
    \end{align}
    where we have abused notation by using $|x-y|$ for the usual flat distance between $x$ and $y$ on $\T^d$, and $\bd_{\T^d,1}$ denotes the 1-Wasserstein distance on $\T^d$. In this case, the parameter $\eta$ appearing in the doubling of variables arguments below plays no role. For example, in the case $d = 1$ we can replace the function $\Psi_{\eps, \eta, \lambda, \gamma}^N$ by 
    \begin{align*}
        \Psi_{\eps, \lambda, \gamma}^N(t,m,\bx) = U(t,m) - V^N(t,\bx) - \frac{1}{2\eps} \bd_2^2(m,m_{\bx}^N) - \lambda(T-t) - \gamma \Big(\frac{1}{t} - \frac{1}{T} \Big), 
    \end{align*}
    and argue that by periodicity, we can find a minimizer $(\ov t, \ov m, \ov{\bx})$ of $\Psi_{\eps, \lambda, \gamma}^N$ with $\ov m$ and $m_{\ov \bx}^N$ supported on, say, $[-2,2]$. Similarly, when $d > 1$, we can replace the function $\Phi_{\eps, \eta, \lambda, \gamma}^N$ by 
    \begin{align*}
        \Phi_{\eps, \lambda, \gamma}^N(t,m,n) = U(t,m) - \wt{V}^N(t,n) - \frac{1}{2\eps} \bd_2^2(m,n) - \lambda(T-t) - \gamma \Big(\frac{1}{t} - \frac{1}{T} \Big), 
    \end{align*} 
    and the function $\hat{\Phi}_{\eps, \lambda, \gamma}^N$ by 
    \begin{align*}
        \hat{\Phi}_{\eps, \lambda, \gamma}^N(t,X,Y) = \hat{U}(t,X) - \hat{V}^N(t,Y) - \frac{1}{2\eps} \|X - Y\|_2^2 - \lambda(T-t) - \gamma \Big(\frac{1}{t} - \frac{1}{T} \Big), 
    \end{align*}
    and argue by periodicity that we can find a maximizer $(\ov t, \ov X, \ov Y)$ of $\hat{\Phi}_{\eps, \lambda, \gamma}^N$ with $X,Y \in [-2,2]^d$ almost surely. From here, mimicking the proofs of Theorem \ref{thm.main} reveals that if \eqref{def.periodic} holds with $\wt{G}$, $\wt{H}$ satisfying \eqref{def.wtH}, \eqref{periodic.1}, \eqref{periodic.2}, and \eqref{periodic.3} hold, then \eqref{rates.common} and \eqref{rates.nocommon} can be improved to 
    \begin{align} \label{rates.common.periodic}
       | V^N(t,\bx) - U(t,m_{\bx}^N) | \leq C  \times \begin{cases}
           N^{-\frac{1}{2}} & d = 1, 
           \\
           N^{- \frac{1}{12}} \log(1 + N)^{1/6} & d = 2
           \\
           N^{- \frac{1}{6d}} & d \geq 3
       \end{cases}
    \end{align}
   when $\kappa > 0$, and to 
    \begin{align} \label{rates.nocommon.periodic}
       | V^N(t,\bx) - U(t,m_{\bx}^N) | \leq C   \times \begin{cases}
           N^{-\frac{1}{2}} & d = 1, 
           \\
           N^{- \frac{1}{8}} \log(1 + N)^{1/4} & d = 2
           \\
           N^{- \frac{1}{4d}} & d \geq 3.
       \end{cases}
    \end{align}
    when $\kappa = 0$.

    In the general, non-periodic setting, one can work toward the same exponents at the expense of controlling stronger moments of the form
    \[
        M_q(m^N_\bx) = \frac{1}{N} \sum_{i = 1}^N |x^i|^q 
    \]
    for some sufficiently large $q > 2$. This is achieved by replacing the penalization $\eta \mcl M_2(m)$ in the doubling-of-variables argument with $\eta \mcl M_q(m)$; note that care must be taken since the map $X \mapsto \norm{X}_q^q$ becomes a singular test function on $\mcl H = L^2$. The main difference in the argument is in the use of the error estimates
    \[
        r_{N,d; q} := \sup\left\{ \inf_{\bx \in (\Rd)^N} \mbf d_q(\mu, \mu^N_\bx) : \mcl M_q(\mu) \le 1 \right\}
    \]
    (note that $q = \infty$ corresponds to the rates $r_{N,d}$ defined in \eqref{def.rnd} and used throughout the paper). The various moments $\norm{\ov X}_q$ and $\norm{\ov Y}_q$ of the extreme points then remain in the argument until the final application of Young's inequality, and the negative powers of $\eta$ make no appearance, similar to the compact case treated above.

    \subsection{Smoother data and stochastic cancellations} It turns out that, due to the averaged structure of the equation \eqref{hjb.inf}, increasing the regularity of the Hamiltonian $H$ beyond the Lipschitz threshold actually leads to a slight improvement in the exponent. For instance, if we add the assumption that, for each $R > 0$, there exists $C_R > 0$ such that 
    \begin{align} \label{assump.c11}
        \sup_{m,x} \| H(x, \cdot, m) \|_{C^{1,1}(B_R)} \leq C_R,
    \end{align}
    then, in this case, instead of using the Lipschitz  regularity of $H$ to bound the term
    \begin{align*}
        \hat{H}\big(Y_{\bx_n}^N, \E[ p_n | \cF_{\Pi}] \big) - \hat{H} \Big(Y_{\bx_n}^N, \frac{\ov X - \ov Y}{\eps} \Big) = \E\Big[H\big(Y_{\bx_n}^N, \E[ p_n | \cF_{\Pi}] m_{\bx_n}^N \big) - H \Big(Y_{\bx_n}^N, \frac{\ov X - \ov Y}{\eps} , m_{\bx_n}^N \Big) \Big]
    \end{align*} 
    in the proof of Proposition \ref{prop.lambabound}, we can instead use the $C^{1,1}$ regularity in $p$ to argue as follows:
\begin{align*}
  H\Big( &Y^N_{\bx_n}, \E[p_n | \cF_{\Pi}] , m_{\bx_n}^N \Big) - H\Big( Y^N_{\oline \bx}, \frac{\oline X - \ov{Y}}{\eps} , m_{\bx_n}^N \Big)
  \\
  &\leq H\Big( Y^N_{\bx_n}, \E\Big[ \frac{\oline X - \ov Y }{\eps} \mid \cF_{\Pi} \Big], m_{\bx_n}^N \Big) - H\Big( Y^N_{\oline \bx}, \frac{\oline X - \ov Y}{\eps} , m_{\bx_n}^N \Big) + \norm{p_n - \frac{\ov X - \ov Y}{\eps}}_2
   \\
   &\leq D_pH \Big( Y^N_{\bx_n}, \E\Big[ \frac{\oline X - \ov Y}{\eps} \mid \cF_{\Pi} \Big], m_{\bx_n}^N \Big) \cdot \Big( \E\Big[ \frac{ \ov X - \ov Y}{\eps} | \cF_{\Pi} \Big] - \frac{ \ov X - \ov Y}{\eps} \Big)
   \\
   &\qquad + C \Big| \E\Big[ \frac{ \ov X - \ov Y}{\eps} | \cF_{\Pi} \Big] - \frac{ \ov X - \ov Y}{\eps} \Big|^2 +  \norm{p_n - \frac{\ov X - \ov Y}{\eps}}_2 
\end{align*}
The term involving $D_pH$ is $\mcl F_\Pi$-measurable, and therefore the entire first term on the right-hand side has mean zero. Taking the expectation, we therefore get
\begin{align*}
   H\Big( &Y^N_{\bx_n}, \E[p_n | \cF_{\Pi}] , m_{\bx_n}^N \Big) - H\Big( Y^N_{\oline \bx}, \frac{\oline X - \ov{Y}}{\eps} , m_{\bx_n}^N \Big) \leq \rho_N^2 \Big( \frac{\ov X - \ov Y}{\eps}, \Pi\Big) + \norm{p_n - \frac{\ov X - \ov Y}{\eps}}_2.
\end{align*}
Plugging this estimate into the proof of Proposition \ref{prop.lambabound}, we see that if \eqref{assump.c11} holds, then in the statement of Proposition \ref{prop.lambabound} we can replace $\rho_N \Big( \frac{\ov X - \ov Y}{\eps}, \Pi\Big)$ by $\rho_N^2 \Big( \frac{\ov X - \ov Y}{\eps}, \Pi\Big)$. Tracing this through the proof of Theorem \ref{thm.main}, we find that for $d \geq 2$, we can improve \eqref{rates.common} and \eqref{rates.nocommon} to
\begin{align} \label{rates.common.c11}
       | V^N(t,\bx) - U(t,m_{\bx}^N) | \leq C \Big(1 + \frac{1}{N} \sum_{i = 1}^N |x^i|^2 \Big) \times \begin{cases}
           N^{- \frac{1}{13}} \log(1 + N)^{2/13} & d = 2
           \\
           N^{- \frac{2}{13d}} & d \geq 3.
       \end{cases}
    \end{align}
   when $\kappa > 0$, and to 
    \begin{align} \label{rates.nocommon.c11}
       | V^N(t,\bx) - U(t,m_{\bx}^N) | \leq C \Big(1 + \frac{1}{N} \sum_{i = 1}^N |x^i|^2 \Big)  \times \begin{cases}
           N^{- \frac{1}{9}} \log(1 + N)^{2/9} & d = 2
           \\
           N^{- \frac{2}{9d}} & d \geq 3.
       \end{cases}
    \end{align}
    when $\kappa = 0$. 

    Finally, if we are in the periodic case \textit{and} we have $C^{1,1}$ regularity in $p$, i.e. \eqref{def.periodic}, \eqref{periodic.1}, \eqref{periodic.2}, \eqref{periodic.3}, and \eqref{assump.c11} all hold, then we can combine the arguments outlined above to obtain the improvements (for $d \geq 2$): 
    \begin{align} \label{rates.common.periodic.c11}
       | V^N(t,\bx) - U(t,m_{\bx}^N) | \leq C \times \begin{cases}
           N^{- \frac{1}{11}} \log(1 + N)^{2/11} & d = 2
           \\
           N^{- \frac{2}{11d}} & d \geq 3.
       \end{cases}
    \end{align}
   when $\kappa > 0$, and
    \begin{align} \label{rates.nocommon.periodic.c11}
       |V^N(t,\bx) - U(t,m_{\bx}^N) | \leq C  \times \begin{cases}
           N^{- \frac{1}{7}} \log(1 + N)^{2/7} & d = 2
           \\
           N^{- \frac{2}{7d}} & d \geq 3.
       \end{cases}
    \end{align}
    when $\kappa = 0$. 
\end{proof}

\appendix 
\section{Proof of the Lipschitz bounds}

This appendix is devoted to a proof of Proposition \ref{prop.uniformlip}.

\begin{proof}
    We begin by proving that there is a constant $C$ such that 
    \begin{align} \label{spatiallip}
        |V^N(t,\bx) - V^N(t,\by)| \leq \frac{C}{N} \sum_{i = 1}^N |x^i - y^i|
    \end{align}
    for all $t \in [0,T]$, $\bx,\by \in (\R^d)^N$. To do this, we introduce the function 
    \begin{align*}
        \lambda : [0,T] \to \R, \quad \lambda(t) = C_0 \exp\big(C_0(T-t) \big), \quad C_0 = (3 C_H) \vee C_G , 
    \end{align*}
    and for simplicity we introduce the notation 
    \begin{align*}
        \Phi : (\R^d)^N \to \R, \quad \Phi(\bz) = \frac{1}{N} \sum_{i = 1}^N |z^i|. 
    \end{align*}
    Our goal is to show that
    \begin{align}
        M_0 \coloneqq \sup_{t \in [0,T], \bx,\by \in (\R^d)^N} \Big\{V^N(t,\bx) - V^N(t,\by) - \lambda(t) \Phi(\bx - \by)\Big\} \leq 0, 
    \end{align}
    which clearly implies \eqref{spatiallip} (with $C = \lambda(0) = C_0 \exp(C_0 T)$). Suppose towards a contradiction that $M_0 > 0$. Then for all $\delta$ small enough, 
    \begin{align*}
        M_{\delta} \coloneqq \sup_{t \in [0,T], \bx,\by \in (\R^d)^N} \Big\{V^N(t,\bx) - V^N(t,\by) - \lambda(t) \Phi(\bx - \by) - \frac{\delta}{2} \sum_{i = 1}^N |x^i|^2 - \delta\Big(\frac{1}{t} - \frac{1}{T}\Big) \Big\} \geq \frac{M}{2} > 0.
    \end{align*}
    The comparison principle, and the continuity and boundedness of $G$, imply that $V^N$ is continuous and bounded, and so the expression defining $M_{\delta}$ has at least one optimizer $(t_{\delta}, s_{\delta}, \bx_{\delta}, \by_{\delta})$, and because $M_{\delta} > 0$ and $C_0 \geq C_G$, there is a constant $\gamma = \gamma(\delta)$ depending on $\delta$ such that any optimizer satisfies $t_{\delta} \in (\gamma,  T - \gamma)$. Now for $\eps > 0$, we define 
    \begin{align*}
        \Phi_{\eps} : (\R^d)^N \to \R, \quad \Phi_{\eps}(\bz) = \frac{1}{N} \sum_{i = 1}^N \big(|z^i|^2 + \eps^2\big)^{1/2}.
    \end{align*}
    Since $M_{\delta} > 0$ (for $\delta$ small enough) it follows that for $\eps$ small enough (depending on $\delta$) we have 
    \begin{align} \label{M.epsdelta}
        &M_{\eps, \delta} \coloneqq 
        \nonumber \\
        &\sup_{t \in [0,T], \bx, \by \in  (\R^d)^N} \Big\{ V^N(t,\bx) - V^N(s,\by)- \lambda(t) \Psi_{\eps}(\bx - \by) - \frac{\delta}{2} \sum_{i = 1}^N |x^i|^2 - \delta\Big(\frac{1}{t} - \frac{1}{T}\Big) \Big\} > 0.
    \end{align}
    Moreover, standard arguments show that if $(t_{\delta, \eps}, s_{\delta,\eps}, \bx_{\delta, \eps}, \by_{\delta, \eps})$ is any optimizer for this expression, then any limit point as $\eps \downarrow 0$ takes the form $(t_{\delta}, t_{\delta}, \bx_{\delta}, \bx_{\delta})$, where $(t_{\delta},\bx_{\delta})$ is an optimizer for the problem defining $M_{\delta}$. In particular, for $\eps$ small enough (depending on $\delta$), we must have $0 < t_{\delta, \eps} < T$, $0 < s_{\delta, \eps} < T$.
    
    We next use the equation for $V^N$. For this, we note that $\Phi_{\eps}(\bx,\by) = \Psi_{\eps}(\bx - \by)$ satisfies
    \begin{align*}
        &D_{x^i} \Phi_{\eps}(\bx,\by) = - D_{y^i} \Phi(\bx,\by) = \frac{x^i - y^i}{\big( |x^i - y^i|^2 + \eps^2 \big)^{1/2}}, 
        \\
        &D^2_{\bx,\by} \Phi_{\eps}(\bx,\by) = \begin{pmatrix}
            D^2 \Psi_{\eps}(\bx - \by) & - D^2 \Psi_{\eps}(\bx - \by) 
            \\
            - D^2 \Psi_{\eps}(\bx - \by) & D^2 \Psi_{\eps}(\bx - \by)
        \end{pmatrix} \in \R^{2dN \times 2dN} \cong (\R^{d \times d})^{2N \times 2N}. 
    \end{align*}
    The matrix $D^2 \Phi_{\eps} \in \R^{dN \times dN}$ can also be computed explicitly, but its exact form will not be used. The parabolic form of Ishii's lemma (see for instance \cite[Theorem 9]{Crandall_Ishii_max}) implies that we can find real numbers $a, b$ and matrices $\bm{A} = (A^{i,j})_{i,j = 1,\ldots,N}, \bm{B} = (B^{i,j})_{i,j = 1,\ldots,N} \in (\R^d)^N \cong \R^{dN \times dN}$ such that 
    \begin{align*}
        &a - b = \lambda'(t_0) \Phi_{\eps}(\bx_{\delta, \eps}, \by_{\delta, \eps}) + t_{\delta, \eps}^{-2} , 
        \\
        &\Big( a - \delta t_{\delta,\eps}^{-2}, \, \lambda(t_{\delta, \eps}) D_{\bx} \Phi_{\eps}(\bx_{\delta,\eps},\by_{\delta,\eps}) + \delta \bx_{\delta, \eps}, \, \bm{A} \Big) \in \ov{J^{2,+}} V^N(t_{\delta,\eps}, \bx_{\delta, \eps}), 
        \\
        & \Big( b, \, -\lambda(t_{\delta, \eps}) D_{\by_{\delta, \eps}} \Phi_{\eps}(\bx_{\delta, \eps},\by_{\delta, \eps}), \, \bm{B} \Big) \in \ov{J^{2,-}} V^N(s_{\delta,\eps}, \by_{\delta, \eps}), 
        \\
        & \begin{pmatrix}
            \bm{A} & \bm{0} 
            \\
            \bm{0} & -\bm{B}
        \end{pmatrix} \leq \bm{C} + \bm{C}^2, \quad \bm{C} \coloneqq D^2_{\bx,\by} \Phi_{\eps}(\bx_{\delta,\eps},\by_{\delta, \eps}).
    \end{align*}
    Using the equation for $V^N$ and the form of $D_{\bx} \Phi$ and $D_{\by} \Phi$, we find
    \begin{align*}
        &- a - \sum_{i,j = 1}^N \text{tr}\big(A^{i,j} \big) 
       + \frac{1}{N} \sum_{i = 1}^N H\Big( x_{\delta,\eps}^i, \lambda(t_{\delta, \eps}) \frac{x_{\delta,\eps}^i - y_{\delta,\eps}^i}{(|x_{\delta,\eps}^i - y_{\delta,\eps}^i|^2 + \eps^2)^{1/2}} + \delta x_{\delta, \eps}^i, m_{\bx_{\delta,\eps}}^N \Big) \leq 0, 
    \end{align*}
    as well as 
    \begin{align*}
        - b - \sum_{i,j = 1}^N \text{tr}\big(B^{i,j} \big) + \frac{1}{N} \sum_{i = 1}^N H\Big( y_{\delta,\eps}^i,  \lambda(t_{\delta, \eps}) \frac{x_{\delta,\eps}^i - y_{\delta,\eps}^i}{(|x_{\delta,\eps}^i - y_{\delta,\eps}^i|^2 + \eps^2)^{1/2}}, m_{\by_{\delta,\eps}}^N \Big) \geq 0.
    \end{align*}
    Subtracting the two inequalities, and recalling that $\lambda'(t) = - C_0 \lambda(t)$, we find that 
    \begin{align*}
      C_0 \lambda(t_{\delta, \eps}) & \Psi_{\eps}(\bx_{\delta,\eps} - \by_{\delta,\eps}) \leq -a + b \leq  \sum_{i,j = 1}^N \tr\big( A^{i,j} - B^{i,j} \big)
      \\
      &- \frac{1}{N} \sum_{i = 1}^N H\Big( x_{\delta,\eps}^i,\lambda(t_{\delta, \eps})\frac{x_{\delta,\eps}^i - y_{\delta,\eps}^i}{(|x_{\delta,\eps}^i - y_{\delta,\eps}^i|^2 + \eps^2)^{1/2}} + \delta x_{\delta, \eps}^i, m_{\bx_{\delta,\eps}}^N \Big)
        \\
        &\qquad \qquad +  \frac{1}{N} \sum_{i = 1}^N H\Big( y_{\delta,\eps}^i, \lambda(t_{\delta, \eps})  \frac{x_{\delta,\eps}^i - y_{\delta,\eps}^i}{(|x_{\delta,\eps}^i - y_{\delta,\eps}^i|^2 + \eps^2)^{1/2}}, m_{\by_{\delta,\eps}}^N \Big)
        \\
        &\leq \sum_{i,j = 1}^N \tr\big( A^{i,j} - B^{i,j} \big) 
        \\
        &\qquad + \frac{C_H}{N} \sum_{i = 1}^N \Big( \lambda(t_{\delta, \eps}) \frac{|x_{\delta,\eps}^i - y_{\delta,\eps}^i|}{(|x_{\delta,\eps}^i - y_{\delta,\eps}^i|^2 + \eps^2)^{1/2}}  + \delta |x_{\delta, \eps}^i| \Big) 
         \times \Big( \delta |x_{\delta, \eps}^i| + |x_{\delta, \eps}^i - y_{\delta,\eps}^i| + \frac{1}{N} \sum_{j = 1}^N |x_{\delta, \eps}^j - y_{\delta, \eps}^j| \Big)
         \\
         &\leq \sum_{i,j = 1}^N \tr\big( A^{i,j} - B^{i,j} \big) + C(\eps,N)  \big(1 + |\bx_{\delta, \eps}|^2\big) \delta 
         \\
         &\qquad + \frac{C_H}{N} \sum_{i = 1}^N \Big( \lambda(t_{\delta, \eps}) \frac{|x_{\delta,\eps}^i - y_{\delta,\eps}^i|}{(|x_{\delta,\eps}^i - y_{\delta,\eps}^i|^2 + \eps^2)^{1/2}} \Big) 
         \times \Big( |x_{\delta, \eps}^i - y_{\delta,\eps}^i| + \frac{1}{N} \sum_{j = 1}^N |x_{\delta, \eps}^j - y_{\delta, \eps}^j| \Big)
         \\
         &\leq \sum_{i,j = 1}^N \tr\big( A^{i,j} - B^{i,j} \big) + C(N)  \big(1 + |\bx_{\delta, \eps}|^2\big) \delta  + \frac{2 C_H \lambda(t)}{N} \sum_{i = 1}^N \frac{|x_{\delta,\eps}^i - y_{\delta,\eps}^i|^2}{(|x_{\delta,\eps}^i - y_{\delta,\eps}^i|^2 + \eps^2)^{1/2}} 
         \\
         &\leq  \sum_{i,j = 1}^N \tr\big( A^{i,j} - B^{i,j} \big) + C(N)  \big(1 + |\bx_{\delta, \eps}|^2\big) \delta  + 2 C_H \lambda(t) \Psi_{\eps}(\bx - \by). 
    \end{align*}
    Now, let $\bm{D} = (\bm{D}^{i,j})_{i,j = 1,\ldots,N} \in (\R^{d \times d})^{N \times N} \cong \R^{Nd \times Nd}$ denote the matrix $\bm{D}^{i,j} = I_{d \times d}$, and notice that 
    \begin{align*}
        \sum_{i,j = 1}^N \tr(A^{i,j}) = \tr\big( \bm{A} \bm{D} \big).
    \end{align*}
    We deduce that
    \begin{align*}
       \sum_{i,j = 1}^N \tr(A^{i,j} - B^{i,j}) = \tr\big( (\bm{A} - \bm{B}) \bm{D} \big) = \tr \bigg( \begin{pmatrix}
           \bm{A} & 0 \\
           0 & -\bm{B}
       \end{pmatrix} \begin{pmatrix}
           \bm{D} & \bm{D}\\
           \bm{D} & \bm{D}
       \end{pmatrix} \bigg) \leq \tr \bigg( \big(\bm{C} + \bm{C}^2 \big)  \begin{pmatrix}
           \bm{D} & \bm{D}\\
           \bm{D} & \bm{D}
       \end{pmatrix}  \bigg) \leq 0, 
    \end{align*}
    with the last line coming from the fact that $\bm{C}$ annihilates vectors of the form $(\bm{v}, \bm{v}) \in (\R^d)^2$.
    Thus we have 
    \begin{align*}
        3C_H \lambda(t_{\delta, \eps}) \Psi_{\eps}(\bx_{\delta,\eps} - \by_{\delta,\eps}) &\leq C_0 \lambda(t_{\delta, \eps}) \Psi_{\eps}(\bx_{\delta,\eps} - \by_{\delta,\eps}) \leq C(N)\big(\delta + \delta |\bx_{\delta, \eps}|^2 \big) + 2C_H \lambda(t) \Psi_{\eps}(\bx_{\delta,\eps} - \by_{\delta,\eps}).
    \end{align*}
    We obtain a contradiction by choosing $\delta$ small enough, and noting that for fixed $\eps$, $\delta |\bx_{\delta, \eps}|^2 \to 0$ as $\delta \downarrow{0}$. We deduce that we must $M_0 = 0$, which establishes \eqref{spatiallip}. Because $V^N$ is symmetric (i.e. $V^N(t,x^1,\ldots,x^N) = V^N(t,x^{\sigma(1)},\ldots,x^{\sigma(N)})$ for any partition $\sigma$ of $\{1,\ldots,N\}$), and 
    \begin{align*}
        \bd_1(m_{\bx}^N,m_{\by}^N) = \inf_{\sigma} \frac{1}{N} \sum_{i = 1}^N |x^i - y^{\sigma(i)}|, 
    \end{align*}
    it follows that 
    \begin{align*}
        |V^N(t,\bx) - V^N(t,\by)| \leq C \bd_1(m_{\bx}^N,m_{\by}^N). 
    \end{align*}
    It remains only to check the time regularity. Since the equation is invariant by time translation, it suffices to check the regularity near the terminal time. Indeed, for $h \in (0,T)$, $V^{N,h} := V^N( \cdot-h,\cdot)$ is a viscosity solution over $[h,T] \times (\R^d)^N$ of the same equation as $V^N$ except that the terminal condition is now $V^N(T-h, \cdot)$ instead of $G(m^N_{\bx})$. As a consequence, by the maximum principle, 
$$ \sup_{(t,\bx) \in [h,T] \times (\R^d)^N} | V^N(t-h,\bx) - V^N(t,\bx) | = \sup_{ \bx \in (\R^d)^N} |V^N(T-h, \bx) - G(m^N_{\bx})|. $$ 
Now, we observe that thanks to the space Lipschitz estimate obtained for $V^N$, there is $C>0$ independent from $N$ such that $V^N$ is a viscosity sub-solution of
\[
    - \partial_t V^{+,N} - \kappa \sum_{i,j=1}^N \tr(D_{x^{i}x^{j}}V^{+,N}) = C \quad \mbox{ in } [0,T) \times \R^d, \quad V^{+,N}(\bx)  = G(m^N_{\bx}) \quad \mbox{ in } (\R^d)^N
\]
and a viscosity super-solution of
\[
    - \partial_t V^{-,N} - \kappa \sum_{i,j=1}^N \tr(D_{x^{i}x^{j}}V^{-,N}) = -C \quad \mbox{ in } [0,T) \times \R^d, \quad V^{-,N}(\bx)  = G(m^N_{\bx}) \quad \mbox{ in } (\R^d)^N.
\]
Denoting respectively by $V^{+,N}$ and $V^{-,N}$ the (classical) solutions to the two equations above, we deduce from the comparison principle that $V^{N,-} \leq V^N \leq V^{N,+}$ over $[0,T] \times (\R^d)^N$. The solutions $V^{N,\pm}$ are given explicitly by  
\[
    V^{N,\pm} (t,\bx) = \pm C(T-t) + \E \bigl[ G(m^N_{\bX_t} )   \bigr] \quad X_t^{i} = x^{i} + \sqrt{2 \kappa} (W_T - W_t) \quad 1 \leq i \leq N
\]
for some (common) Brownian motion $(W_t)_{t \geq 0}$, and therefore, for all $(t,\bx) \in [0,T] \times (\R^d)^N$, 
\[
    |V^{N,\pm}(t,\bx) - G(m^N_{\bx})| \leq C(T-t) + \text{Lip}(G;d_1) \sqrt{2 \kappa} \sqrt{T-t}
\]
and thus
\[
    |V^N(t,\bx) - G(m^N_{\bx})| \leq 2C(T-t) + 2\text{Lip}(G;\bd_1) \sqrt{2 \kappa} \sqrt{T-t}.
\]
\end{proof}

\bibliographystyle{acm}
\bibliography{gms_rates}
\end{document}